\begin{document}
\setlength{\oddsidemargin}{0cm} \setlength{\evensidemargin}{0cm}
\baselineskip=20pt

\theoremstyle{plain} \makeatletter
\newtheorem{theorem}{Theorem}[section]
\newtheorem{proposition}[theorem]{Proposition}
\newtheorem{lemma}[theorem]{Lemma}
\newtheorem{coro}[theorem]{Corollary}

\theoremstyle{definition}
\newtheorem{defi}[theorem]{Definition}
\newtheorem{notation}[theorem]{Notation}
\newtheorem{exam}[theorem]{Example}
\newtheorem{prop}[theorem]{Proposition}
\newtheorem{conj}[theorem]{Conjecture}
\newtheorem{prob}[theorem]{Problem}
\newtheorem{remark}[theorem]{Remark}
\newtheorem{claim}{Claim}

\newcommand{\SO}{{\mathrm S}{\mathrm O}}
\newcommand{\SU}{{\mathrm S}{\mathrm U}}
\newcommand{\Sp}{{\mathrm S}{\mathrm p}}
\newcommand{\so}{{\mathfrak s}{\mathfrak o}}
\newcommand{\Ad}{{\mathrm A}{\mathrm d}}
\newcommand{\m}{{\mathfrak m}}
\newcommand{\g}{{\mathfrak g}}
\newcommand{\p}{{\mathfrak p}}
\newcommand{\fk}{{\mathfrak k}}
\newcommand{\E}{{\mathrm E}}
\newcommand{\F}{{\mathrm F}}
\newcommand{\T}{{\mathrm T}}
\newcommand{\ffb}{{\mathfrak b}}
\newcommand{\fl}{{\mathfrak l}}
\newcommand{\G}{{\mathrm G}}
\newcommand{\R}{{\mathrm Ric}}
\newcommand{\yd}{\approx}

\numberwithin{equation}{section}

\title{New Non-naturally reductive Einstein metrics on Exceptional simple Lie groups}
\author{Huibin Chen, Zhiqi Chen and ShaoQiang Deng}
\address{School of Mathematical Sciences and LPMC, Nankai University,
Tianjin 300071, P.R. China}\email{chenhuibin@mail.nankai.edu.cn, chenzhiqi@nankai.edu.cn,dengsq@nankai.edu.cn}
\date{}
\maketitle
\begin{abstract}
In this article, we achieved several non-naturally reductive Einstein metrics on exceptional simple Lie groups, which are formed by the decomposition arising from general Wallach spaces. By using the decomposition corresponding to the two involutive automorphisms, we calculated the non-zero coefficients in the expression for the components of Ricci tensor with respect to the given metrics. The Einstein metrics are obtained as solutions of systems polynomial equations, which we manipulate by symbolic computations using Gr\"{o}bner bases.
\end{abstract}

\section{Introduction}
A Riemannian manifold $(M,g)$ is called Einstein if there exists a constant $\lambda\in\mathbb{R}$ such that the Ricci tensor $r$ with respect to $g$ satisfies $r=\lambda g$. The readers can go to Besse's book \cite{Be} for more details and results in this field before 1986. General existence results are difficult to obtain, as a result, many mathematicians pay more attention on the special examples for Einstein manifolds. Among the first important attempts, the works of G. Jensen \cite{GJ1973} and M. Wang, W. Ziller \cite{WaZi} made much contributions to the progress of this field. When the problem is restricted to Lie groups, D'Atri and Ziller in \cite{JW} obtained a large number of naturally reductive left-invariant metrics when $G$ is simple. Also in this paper, they raised a problem: Whether there exists non-naturally reductive Einstein metrics on compact Lie group $G$? 

In 1994, K. Mori \cite{Mo} discovered the first left-invariant Einstein metrics on compact simple Lie groups $\SU(n)$ for $n\geq 6$, which are non-naturally reductive. In 2008, Arvanitoyeorgos, Mori and Sakane proved the existence of new non-naturally reductive Einstein metrics for $\SO(n)(n\geq11)$, $\Sp(n)(n\geq3)$, $\E_6$, $\E_7$ and $\E_8$, using fibrations of a compact simple Lie group over a K\"{a}hler $C$-space with two isotropy summands (see \cite{ArMoSa}). In 2014, by using the methods of representation theory, Chen and Liang \cite{ChLi} found a non-naturally reductive Einstein metric on the compact simple Lie group $\F_4$. More recently, Chrysikos and Sakane proved that there exists non-naturally reductive Einstein metric on exceptional Lie groups, espeacially for $\G_2$, they gave the first example of non-naturally reductive Einstein metric (see \cite{ChSa}).

In this paper we consider new non-naturally reductive Einstein metrics on compact exceptional Lie groups $G$ which can be seen as the principal bundle over generalized Wallach spaces $M=G/K$. In 2014, classification for generalized Wallach spaces arising from a compact simple Lie group has been obtained by Nikonorov \cite{Ni} and Chen, Kang and Liang \cite{ChKaLi}, in particular, Nikonorov investigated the semi-simple case and gave the classification in \cite{Ni}. 

As is known to all, the involutive automorphisms play an important role in the development of homogeneous geometry.  The Riemannian symmetric pairs were classified by Cartan \cite{Ca}, in Lie algebra level, which can be treated as the structure of a Lie algebra with an involutive automorphism satisfying some topologic properties. Later on, the more general semi-simple symmetric pairs were studied by Berger \cite{Ber}, whose classification can be obtained in the view of involutive automorphism. Recently, Huang and Yu \cite{HuYu} classified the Klein four subgroups $\Gamma$ of $\mathrm{Aut}(\frak{u}_0)$ for each compact Lie algebra $\frak{u}_0$ up to conjugation by calculating the symmetric subgroups $\mathrm{Aut}(\frak{u}_0)^\theta$ and their involution classes. 

According to the article \cite{ChKaLi}, each kind of generalized Wallach spaces arising from simple Lie groups is associated with two commutative involutive automorphisms of $\g$, the Lie algebra of $G$. With these two involutive automorphisms, we have two different corresponding decompositions of $\g$, which are in fact irreducible symmetric pairs. According to these two irreducible symmetric pairs, we can get some linear equations for the non-zero coefficients in the expression of components of Ricci tensor with respect to the given metric. With the help of computer, we get the Einstein metrics from the solutions of  systems polynomial equations. We mainly deal with two kinds of generalized Wallach spaces, one of which is without centers in $\fk$ and the other is with a $1$-dimensional center in $\fk$. Along with the results in \cite{ChLi}, we list all the number of non-naturally reductive left-invariant Einstein metrics on exceptional simple Lie groups $G$ arising from generalized Wallach spaces with no center in $\fk$. 
\begin{table}[!hbp]
\begin{tabular}{c|c|c|c|c}
\hline
$G$ & Types & $K$ & $p+q$ & $N_{non-nn}$ \\
\hline
\hline
$\F_4$ & $\F_4$-I & $\SO(8)$ & $1+3$ & 1\cite{ChLi}\\
            & $\F_4$-II & $\SU(2)\times\SU(2)\times\SO(5)$ & $3+3$ & 3 new\\
\hline
$\E_6$ & $\E_6$-III & $\SU(2)\times\mathrm{Sp}(3)$ & $2+3$ & 4 new\\
            & $\E_6$-II & $\mathrm{U}(1)\times\SU(2)\times\SU(2)\times\SU(4)$ & $0+3+3$ & 7 new\\
\hline
$\E_7$ & $\E_7$-I & $\SU(2)\times\SU(2)\times\SU(2)\times\SO(8)$ & $4+3$ & 7 new \\
            & $\E_7$-III & $\SO(8)$ & $1+3$ & 1 \\
            & $\E_7$-II & $\mathrm{U}(1)\times\SU(2)\times\SU(6) $ & $0+2+3$ & 6 new \\
\hline
$\E_8$ & $\E_8$-I & $\SU(2)\times\SU(2)\times\SO(12)$ & $3+3$ & 11 new\\
            & $\E_8$-II & $\Ad(\SO(8)\times\SO(8))$ & $2+3$ & 2 new\\
\hline
\end{tabular}
\caption{Number of non-naturally reductive left-invariant Einstein metrics on exceptional simple Lie group $G$ arising from generalized Wallach spaces}
\end{table} 
In this table, we still use the notations in \cite{ChKaLi} to represent the type of generalized Wallach space, $N_{non-nn}$ represents the number of non-naturally reductive Einstein metrics on $G$ and $p,q$ coincides with the indices in the decomposition $\g=\fk_1\oplus\cdots\oplus\fk_p\oplus\m_1\oplus\cdots\oplus\m_q$, in fact $q=3$ for all types.

We describe our results in the following theorem.
\begin{theorem}
\begin{enumerate}
\item The compact simple Lie group $\F_4$ admits at least 6 new non-naturally reductive and non-isometric left-invariant Einstein metrics. These metrics are $\Ad(\SU(2)\times\SU(2)\times\SO(5))$-invariant.
\item The compact simple Lie group $\E_6$ admits at least 11 new non-naturally reductive and non-isometric left-invariant Einstein metrics. Four of these metrics are $\Ad(\SU(2)\times\mathrm{Sp}(3))$-invariant and thie other 7 are $\Ad(\mathrm{U}(1)\times\SU(2)\times\SU(2)\times\SU(4))$-invariant.
\item The compact simple Lie group $\E_7$ admits at least 13 new non-naturally reductive and non-isometric left-invariant Einstein metrics. Seven of these metrics are $\Ad(\SU(2)\times\SU(2)\times\SU(2)\times\SO(8))$-invariant and the other 6 are $\Ad(\mathrm{U}(1)\times\SU(2)\times\SU(6))$-invariant.
\item The compact simple Lie group $\E_8$ admits at least 13 new non-naturally reductive and non-isometric left-invariant Einstein metrics. Two of these metrics are $\Ad(\SO(8)\times\SO(8))$-invariant and the other 11 are $\Ad(\SU(2)\times\SU(2)\times\SO(12))$-invariant.
\end{enumerate}
\end{theorem}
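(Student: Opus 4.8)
\emph{Setup.} Fix one of the four groups $G$ and the corresponding generalized Wallach space $M=G/K$ from the table, together with the decomposition
\begin{equation*}
\g=\fk_1\oplus\cdots\oplus\fk_p\oplus\m_1\oplus\m_2\oplus\m_3
\end{equation*}
provided by \cite{ChKaLi}, where $\fk=\fk_1\oplus\cdots\oplus\fk_p$ is the splitting of $\fk$ into its centre and simple ideals, $[\m_i,\m_i]\subseteq\fk$, and $[\m_i,\m_j]\subseteq\m_k$ for every permutation $\{i,j,k\}=\{1,2,3\}$. On $G$ we consider the left-invariant metrics determined by the $\Ad(K)$-invariant inner products
\begin{equation*}
\langle\,\cdot\,,\,\cdot\,\rangle \;=\; \sum_{a=1}^{p}x_a\,(-B)|_{\fk_a}\;+\;\sum_{i=1}^{3}y_i\,(-B)|_{\m_i},\qquad x_a,\,y_i>0,
\end{equation*}
$B$ being the Killing form of $\g$. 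In each of the cases listed the summands $\fk_1,\dots,\fk_p,\m_1,\m_2,\m_3$ are pairwise inequivalent as $\Ad(K)$-modules, so these are exactly the $\Ad(K)$-invariant metrics that are diagonal in the decomposition; any Einstein metric produced in this family therefore automatically has the invariance asserted in the theorem, and for the two types in which $\fk$ has a one-dimensional centre that summand simply contributes one more parameter $x_a$. The aim is to exhibit, for each $G$, the asserted number of such metrics that are Einstein, mutually non-isometric, and non-naturally reductive.

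\emph{Step 1: the Ricci tensor.} Since $\langle\,\cdot\,,\,\cdot\,\rangle$ is $\Ad(K)$-invariant and the summands are inequivalent, the Ricci tensor is again diagonal, and the standard formula for the Ricci curvature of a left-invariant metric (see \cite{Be}, \cite{WaZi}) writes each eigenvalue $r_{\fk_a}$ on $\fk_a$ and $r_{\m_i}$ on $\m_i$ as an explicit rational function of $x_1,\dots,x_p,y_1,y_2,y_3$ whose coefficients are the symmetric non-negative structure symbols $[\,\cdot\,\cdot\,\cdot\,]$ of triples of summands. The generalized Wallach condition kills every symbol except those internal to $\fk$, those of type $[\fk_a\,\m_i\,\m_i]$, and the single symbol $[\m_1\m_2\m_3]$, so only finitely many coefficients are unknown. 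To evaluate them we use the two commuting involutive automorphisms $\sigma,\tau$ attached to the generalized Wallach space by \cite{ChKaLi}: each of $\g^{\sigma},\g^{\tau},\g^{\sigma\tau}$ equals one of the $\fk\oplus\m_i$, so that $\g=(\fk\oplus\m_i)\oplus(\m_j\oplus\m_k)$ is an irreducible symmetric decomposition, and for an irreducible symmetric pair the associated symbols are constrained by the Casimir eigenvalue of the isotropy. Combined with the trace relations $\sum_{j,k}[ijk]=d_i$ (which reflect the fact that $-B$ itself is Einstein, $d_a=\dim\fk_a$, $d_i=\dim\m_i$), these give a \emph{linear} system whose solution expresses every coefficient through the dimensions alone, after which the Ricci components are fully explicit.

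\emph{Step 2: solving and recognising the metrics.} The metric is Einstein iff $r_{\fk_1}=\cdots=r_{\fk_p}=r_{\m_1}=r_{\m_2}=r_{\m_3}$; normalising $y_3=1$ by the homothety freedom and clearing denominators turns this into a polynomial system in $x_1,\dots,x_p,y_1,y_2$. One computes a Gr\"obner basis of the corresponding ideal, extracts the real solutions, keeps those with all parameters positive, and counts them modulo the finite group permuting equivalent summands; this yields the numbers claimed. To see that the resulting metrics are non-naturally reductive one compares them with D'Atri--Ziller's classification \cite{JW} of naturally reductive left-invariant metrics on a compact simple Lie group: such a metric is controlled by a closed subgroup $L\subseteq G$, being a fixed multiple of $-B$ on the $(-B)$-orthogonal complement of $\fl$ and scalar on each simple ideal of $\fl$; for each of our solutions one checks that the multiset $\{x_a,y_i\}$ is incompatible with every admissible $L$ (typically, a naturally reductive metric that fails to be scalar on some $\m_i$ would force two of the remaining parameters to coincide, which the explicit roots do not). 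Finally, distinctness of the metrics from one another and from the previously known ones follows by comparing invariants such as the normalised scalar curvature, using that isometric left-invariant metrics on a compact simple Lie group differ by an automorphism of $\g$.

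\emph{Main obstacle.} The difficulty is concentrated in two places. First, the honest evaluation of the symbols $[ijk]$ for each of the nine cases: this is where the exceptional algebras, reached precisely through the two involutions and the associated symmetric pairs, must be treated individually, and any slip there corrupts everything downstream. Second, the Gr\"obner-basis elimination produces high-degree polynomials with many complex or non-positive spurious roots, so isolating the positive real solutions and then certifying, one by one, that none of them lies in the D'Atri--Ziller family is the laborious core of the proof.
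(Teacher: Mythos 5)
Your proposal is correct and follows essentially the same route as the paper: diagonal $\Ad(K)$-invariant metrics adapted to the generalized Wallach decomposition, evaluation of the nonvanishing symbols $(ijk)$ by comparing Ricci components along the symmetric pairs attached to the two commuting involutions (together with the trace identities), Gr\"obner-basis solution of the normalized Einstein system, and exclusion of naturally reductive solutions via the D'Atri--Ziller criterion, exactly as in Sections 2--4. The only mild overstatement is that the linear relations do not fix every coefficient ``from the dimensions alone''; as in the paper one must also feed in the index/Casimir data for the simple ideals (Lemma 3.1) and the value of the triple symbol $[\m_1\m_2\m_3]$ taken from Nikonorov's table, after which your outline coincides with the paper's argument.
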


The paper is organized as follows: In section 2 we will recall a formula for the Ricci tensor of $G$ when we see $G$ as a homogeneous space. In section 3 we will introduce how we operate our methods to solve the non-zero coefficients in the expressions for Ricci tensor, where we will classify the exceptional Lie groups by the number of simple ideals of $\fk$. Then for each case in section 3, we will discuss the non-naturally reductive Einstein metrics via the solutions of systems polynomial equations, which will be described in section 4.

\section{The Ricci tensor for reductive homogeneous spaces}
In this section we will recall an expression for the Ricci tensor with respect to a class of given metrics on a compact semi-simple Lie group and figure out whether a metric on $G$ is naturally reductive. 

Let $G$ be a compact semi-simple Lie group with Lie algebra $\g$, $K$ a connected closed subgroup of $G$ with Lie algebra $\fk$. Through this paper, we denote by $B$ the negative of the Killing form of $\g$, which is positive definite because of the compactness of $G$, as a result, $B$ can be treated as an inner product on $\g$. Let $\g=\fk\oplus\m$ be the reductive decomposition with respect to $B$ such that $[\fk,\m]\subset\m$, where $\m$ is the tangent space of $G/K$. We assume that $\m$ can be decomposed into mutually non-equivalent irreducible $\Ad(K)$-modules as follows:
\begin{equation*}
\m=\m_1\oplus\cdots\oplus\m_q.
\end{equation*}
We will write $\fk=\fk_0\oplus\fk_1\oplus\cdots\oplus\fk_p$, where $\fk_0=Z(\fk)$ is the center of $\fk$ and $\fk_i$ is the simple ideal for $i=1,\cdots,p$. Let $G\times K$ act on $G$ by $(g_1,g_2)g=g_1gg_2^{-1}$, then $G\times K$ acts almost effectively on $G$ with isotropy group $\Delta(K)=\{(k,k)|k\in K\}$. As a result, $G$ can be treated as the coset space $(G\times K)/\Delta(K)$ and we have $\g\oplus\fk=\Delta(\fk)\oplus\Omega$, where $\Omega\cong T_0((G\times K)/\Delta(K))\cong\g$ via the linear map $(X,Y)\rightarrow(X-Y)\in\g$, $(X,Y)\in\Omega$.

As is known, there exists an 1-1 corresponding between all $G$-invariant metrics on the reductive homogeneous space $G/K$ and $\Ad_G(K)$-invariant inner products on $\m$. A Riemannian homogeneous space $(M=G/K,g)$ with reductive complement $\m$ of $\fk$ in $\g$ is called $naturally$ $reductive$ if 
\begin{equation*}
([X,Y]_\m,Z)+(Y,[X,Z]_\m)=0,
\end{equation*} 
where $X,Y,Z\in\m$, $(\ ,\ )$ is the corresponding inner product on $\g$.

In \cite{JW}, D'Atri and Ziller study the naturally reductive metrics among left invariant metrics on compact Lie groups and they obtained a complete classification of such metrics in the simple case. The following theorem will play an important role to decide whether a left-invariant metric on a Lie group is naturally reductive. 
\begin{theorem}\label{nr}
For any inner product $b$ on the center $\fk_0$ of $\fk$, the following left-invariant metrics on $G$ is naturally reductive with respect to the action $(g,k)y=gyk^{-1}$ of $G\times K$:
\begin{equation*}
\langle\ ,\ \rangle=u_0b|_{fk_0}+u_1B|_{\fk_1}+\cdots+u_pB|_{\fk_p}+xB|_{\m},\quad(u_0,u_1,\cdots,u_p,x\in\mathbb{R}^+)
\end{equation*}
Conversely, if a left-invariant metric $\langle\ ,\ \rangle$ on a compact simple Lie group $G$ is naturally reductive, then there exists a closed subgroup $K$ of $G$ such that $\langle\ ,\ \rangle$ can be written as above.
\end{theorem}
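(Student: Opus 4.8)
This is the theorem of D'Atri and Ziller from \cite{JW}; here is the line of argument I would follow, in two directions.

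\emph{Sufficiency.} The plan is to realize $G$ as the homogeneous space $(G\times K)/\Delta(K)$ with isotropy algebra $\Delta(\fk)$, exactly as in the paragraph preceding the statement, so that $\g\cong\Omega$ via $(X,Y)\mapsto X-Y$, and then invoke a Kostant-type criterion: if $Q$ is any $\Ad(G\times K)$-invariant symmetric bilinear form on $\g\oplus\fk$ (not necessarily positive definite) which is nondegenerate on $\Delta(\fk)$, then $\Omega:=\Delta(\fk)^{\perp_Q}$ is an $\Ad(\Delta K)$-invariant reductive complement, and whenever $Q|_\Omega$ is positive definite it defines a left-invariant metric on $G$ that is automatically naturally reductive with respect to $G\times K$. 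Indeed, for $X,Y,Z\in\Omega$ one has $Q([X,Y]_\Omega,Z)=Q([X,Y],Z)$ and $Q(Y,[X,Z]_\Omega)=Q(Y,[X,Z])$ since $\Delta(\fk)\perp_Q\Omega$, and these sum to zero by $\Ad$-invariance of $Q$. So it remains to exhibit suitable $Q$.

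\emph{Realizing the metric.} Take $Q=xB|_\g\oplus\bar\beta$ on $\g\oplus\fk$, where $x>0$ and $\bar\beta$ is an $\Ad(K)$-invariant symmetric bilinear form on $\fk$. Since $\fk_0=Z(\fk)$ carries the trivial $\Ad(K)$-action and each $\fk_i$ $(1\le i\le p)$ is an absolutely irreducible $\Ad(K)$-module, Schur's lemma forces $\bar\beta=\gamma_0\oplus\bigoplus_{i=1}^p c_iB|_{\fk_i}$ with $\gamma_0$ an arbitrary symmetric form on $\fk_0$ and $c_i\in\mathbb R$. A direct computation of $\Delta(\fk)^{\perp_Q}$ and of the induced form on $\g$, splitting everything along $\fk_0\oplus\fk_1\oplus\cdots\oplus\fk_p\oplus\m$, shows $Q|_\Omega=u_0b|_{\fk_0}+u_1B|_{\fk_1}+\cdots+u_pB|_{\fk_p}+xB|_\m$, where $b$ is the inner product on $\fk_0$ determined by $\gamma_0$ and $u_i=xc_i/(x+c_i)$. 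Letting $c_i$ range over $(0,\infty)$ and over $(-\infty,-x)$ realizes every value $u_i\in(0,\infty)\setminus\{x\}$, and $\gamma_0$ realizes every inner product $b$ on $\fk_0$; the remaining cases in which some $u_i$ equals $x$ are obtained by enlarging $\m$, i.e. passing to a smaller subgroup $K$, which is legitimate because each $\fk_i$ is an ideal of $\fk$. This proves the first assertion.

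\emph{Necessity and the main obstacle.} Conversely, let $\langle\ ,\ \rangle$ be a naturally reductive left-invariant metric on the compact \emph{simple} group $G$. The genuine obstacle is here: one must show that the transitive isometry group realizing the naturally reductive structure can be taken of the form $G\times K$ with $K$ a closed subgroup of $G$ acting by right translations, which requires controlling the full connected isometry group of a left-invariant metric on a compact simple Lie group (one has $K\subseteq K':=\{k\in G:\Ad(k)$ preserves $\langle\ ,\ \rangle_e\}$). Granting this, $G=(G\times K)/\Delta(K)$ and the naturally reductive structure supplies an $\Ad(G\times K)$-invariant symmetric bilinear form $Q$ on $\g\oplus\fk$, nondegenerate on $\Delta(\fk)$, with $\langle\ ,\ \rangle_e=Q|_{\Delta(\fk)^{\perp_Q}}$. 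Because $\g$ is simple, $\Ad(G)$-invariance forces $Q$ to be block diagonal, $Q=cB|_\g\oplus Q_\fk$ with $c\ne0$ (and in fact $c>0$, using positivity of $Q|_\Omega$ on $\m\oplus 0\subseteq\Omega$); and Schur's lemma again gives $Q_\fk=\gamma_0\oplus\bigoplus c_iB|_{\fk_i}$. Running the computation of the previous step in reverse identifies $\langle\ ,\ \rangle_e$ with $u_0b|_{\fk_0}+\sum u_iB|_{\fk_i}+xB|_\m$, where $\m=\fk^{\perp_B}$ satisfies $[\fk,\m]\subseteq\m$ automatically by $\Ad$-invariance of $B$. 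This is the asserted form, so the proof is complete once the isometry-group input is in hand; everything else is the Kostant criterion together with Schur's lemma and the bookkeeping of the parameters $c_i,\gamma_0$.
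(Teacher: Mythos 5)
The paper never proves this statement: it is imported verbatim as the D'Atri--Ziller theorem from \cite{JW} and used as a black box, so there is no in-paper argument to compare yours against. Judged on its own terms, your sketch follows the original D'Atri--Ziller route, and the sufficiency half is essentially sound: the Kostant-type criterion for $(G\times K)/\Delta(K)$, the block-diagonal shape $Q=xB|_{\g}\oplus\bar\beta$ (no mixed $\g$--$\fk$ terms, since an $\Ad(G)$-invariant functional on a semisimple $\g$ vanishes), the Schur decomposition of $\bar\beta$, and the bookkeeping $u_i=xc_i/(x+c_i)$ are all correct; the boundary value $u_i=x$ is acceptably absorbed by enlarging $\m$, i.e.\ shrinking $K$, which is compatible with the statement since the converse only asserts the existence of some closed subgroup $K$.

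The genuine gap is the one you flag yourself in the converse. ``Granting'' that the transitive isometry group realizing the naturally reductive structure can be taken of the form $G\times K$ with $K\subset G$ acting by right translations is not a routine reduction; it is the actual content of D'Atri--Ziller's converse, and it rests on the Ochiai--Takahashi theorem that the identity component of the isometry group of a left-invariant metric on a compact simple Lie group is contained in $L(G)R(G)$, together with an argument that a transitive group exhibiting natural reductivity can be brought into the form $G\times K$. Without that input the converse does not start. A second, smaller omission: Kostant's theorem supplies the invariant nondegenerate form $Q$ only on the transvection algebra $\Omega+[\Omega,\Omega]$, not automatically on all of $\g\oplus\fk$; to write $Q=cB|_{\g}\oplus Q_{\fk}$ and run your computation in reverse you need either that this algebra is all of $\g\oplus\fk$ or an extension argument, which your sketch silently assumes. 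So the strategy is the right one, but the necessity direction as written presupposes exactly the two nontrivial ingredients of the theorem.
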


Now we have a orthogonal decomposition of $\g$ with respect to the Killing form of $\g$: $\g=\fk_0\oplus\fk_1\oplus\cdots\oplus\fk_p\oplus\m_1\oplus\cdots\oplus\m_q=(\fk_0\oplus\fk_1\oplus\cdots\oplus\fk_p)\oplus(\fk_{p+1}\oplus\cdots\oplus\fk_{p+q})$, with $\m_1\oplus\cdots\oplus\m_q=\fk_{p+1}\oplus\cdots\oplus\fk_{p+q}$, respectively. In addition, we assume that $\mathrm{dim}_{\mathbb{R}}\fk_0\leq1$ and the ideals $\fk_i$ are mutually non-isomorphic for $i=1,\cdots,p$. Then we consider the following left-invariant metric on $G$ which is in fact $\Ad(K)$-invariant:
\begin{equation}\label{metric1}
\langle\ ,\ \rangle=x_0\cdot B|_{\fk_0}+x_1\cdot B|_{\fk_1}+\cdots+x_{p+q}\cdot B|_{\fk_{p+q}},
\end{equation}
where $x_i\in\mathbb{R}^+$ for $i=1,\cdots,p+q$.

Set from now on $d_i=\mathrm{dim}_{\mathbb{R}}\fk_i$ and $\{e^{i}_{\alpha}\}_{\alpha=1}^{d_i}$ be a $B$-orthonormal basis adapted to the decomposition of $\g$ which means $e_{\alpha}^{i}\in\fk_i$ and $\alpha$ is the number of basis in $\fk_i$. Then we consider the numbers $A_{\alpha,\beta}^{\gamma}=B([e_{\alpha}^{i},e_{\beta}^{j}],e_{\gamma}^{k})$ such that $[e_{\alpha}^{i},e_{\beta}^{j}]=\sum_{\gamma}A_{\alpha,\beta}^{\gamma}e_{\gamma}^{k}$, and set
\begin{equation*}
(ijk):=\Big[\begin{array}{cc}
i \\
j \ k
\end{array}\Big]=\sum(A_{\alpha,\beta}^{\gamma})^2,
\end{equation*}
where the sum is taken over all indices $\alpha,\beta,\gamma$ with $e_{\alpha}^{i}\in\fk_i, e_{\beta}^{j}\in\fk_j, e_{\gamma}^{k}\in\fk_k$. Then $(ijk)$ is independent of the choice for the $B$-orthonormal basis of $\fk_i,\fk_j,\fk_k$, and symmetric for all three indices which means $(ijk)=(jik)=(jki)$. 

In \cite{ArMoSa} and \cite{PaSa}, the authors obtained the formulas for the components of Ricci tensor with respect to the left-invariant metric given by (\ref{metric1}), which can be described by the following lemma:
\begin{lemma}\label{formula1}
Let $G$ be a compact connected semi-simple Lie group endowed with the left-invariant metric $\langle\ ,\ \rangle$ given by (\ref{metric1}). Then the components $r_0,r_1,\cdots,r_{p+q}$ of the Ricci tensor associated to $\left<\ ,\ \right>$ are expressed as follows:
\begin{equation*}
r_k=\frac{1}{2x_k}+\frac{1}{4d_k}\sum_{j,i}\frac{x_k}{x_jx_i}\Big[\begin{array}{cc}
k \\
j \ i
\end{array}\Big]-\frac{1}{2d_k}\sum_{j,i}\frac{x_j}{x_kx_i}\Big[\begin{array}{cc}
j \\
k \ i
\end{array}\Big],\quad(k=0,1,\cdots,p+q).
\end{equation*}
Here, the sums are taken over all $i=0,1,\cdots,p+q$. In particular, for each $k$ it holds that
\begin{equation*}
\sum_{i,j}^{p+q}\Big[\begin{array}{cc}
j \\
k \ i
\end{array}\Big]=\sum_{ij}^{p+q}(kij)=d_k.
\end{equation*}
\end{lemma}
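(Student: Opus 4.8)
The plan is to reduce the statement to the classical moving-frame formula for the Ricci curvature of a left-invariant metric on a compact Lie group, the extra ingredient being that the metric \eqref{metric1} is also $\Ad(K)$-invariant. First I would note that, being $\Ad(K)$-invariant, $\langle\ ,\ \rangle$ is invariant under the whole action $(g,k)\cdot y=gyk^{-1}$ of $G\times K$ on $G$; hence the Ricci tensor $r$ is $G\times K$-invariant and is determined by its value at $e$, an $\Ad(K)$-invariant symmetric bilinear form on $\g$. Under the running assumptions ($\fk_1,\dots,\fk_p$ mutually non-isomorphic simple ideals, $\dim_{\mathbb R}\fk_0\le 1$, and $\m_1,\dots,\m_q$ pairwise inequivalent irreducible $\Ad(K)$-modules), adding the (in our cases automatic) requirement that no $\fk_i$ be $\Ad(K)$-equivalent to any $\m_j$, the blocks $\fk_0,\fk_1,\dots,\fk_{p+q}$ are pairwise inequivalent irreducible $\Ad(K)$-modules; so Schur's Lemma forces the Ricci endomorphism to act as a scalar $r_k$ on each $\fk_k$, i.e. $r=\sum_{k=0}^{p+q}r_k\,B|_{\fk_k}$ and $r_k=\mathrm{Ric}(Z,Z)$ for every $B$-orthonormal $Z\in\fk_k$.

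Next I would invoke the standard expression for a compact (hence unimodular, $\mathrm{tr}\,\mathrm{ad}_X\equiv 0$) Lie group: for a $\langle\ ,\ \rangle$-orthonormal basis $\{X_a\}$ of $\g$,
\[
\mathrm{Ric}(X,X)=-\tfrac12\sum_a\langle[X,X_a],[X,X_a]\rangle-\tfrac12\sum_a\langle[X,[X,X_a]],X_a\rangle+\tfrac14\sum_{a,b}\langle[X_a,X_b],X\rangle^2 .
\]
Taking the adapted orthonormal basis $X_a=e_\alpha^{i}/\sqrt{x_i}$, inserting $X=e_\beta^{k}/\sqrt{x_k}$ and averaging over $\beta=1,\dots,d_k$ (legitimate by the previous step), I would expand every bracket in the $B$-orthonormal basis and use $\langle e_\gamma^{l},e_{\gamma'}^{l'}\rangle=x_l\,\delta$. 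The middle sum collapses to $\tfrac1{2x_k}$ because $\sum_{i,\alpha}B\big([e_\beta^{k},[e_\beta^{k},e_\alpha^{i}]],e_\alpha^{i}\big)=\mathrm{tr}\big(\mathrm{ad}_{e_\beta^{k}}\circ\mathrm{ad}_{e_\beta^{k}}\big)=-B(e_\beta^{k},e_\beta^{k})=-1$; the first sum becomes $-\tfrac1{2d_k}\sum_{i,j}\tfrac{x_j}{x_kx_i}(jki)$ and the third becomes $\tfrac1{4d_k}\sum_{i,j}\tfrac{x_k}{x_ix_j}(kij)$, both by a direct count of the squares $B([e_\alpha^{i},e_\beta^{j}],e_\gamma^{l})^2$. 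Since $(ijk)$ is symmetric in all three indices, the three contributions assemble into exactly the asserted formula.

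For the final identity I would use that $B$ is minus the Killing form and $\mathrm{ad}_X$ is $B$-skew, so $B(X,X)=-\mathrm{tr}(\mathrm{ad}_X^2)=\sum_a B([X,e_a],[X,e_a])$ over any $B$-orthonormal basis $\{e_a\}$ of $\g$. Applying this to $X=e_\alpha^{k}$ and summing over $\alpha=1,\dots,d_k$ gives $d_k=\sum_{\alpha}\sum_{i,j,\beta,\gamma}B([e_\alpha^{k},e_\beta^{j}],e_\gamma^{i})^2=\sum_{i,j}(kij)$, and the two displayed forms coincide by the symmetry of $(\cdot\,\cdot\,\cdot)$.

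I do not expect any single step to be genuinely hard; the point to get right is the Schur reduction — that the stated hypotheses really do make $\fk_0,\dots,\fk_{p+q}$ pairwise inequivalent $\Ad(K)$-modules (this is what legitimizes both the block form of $r$ and the averaging over $\beta$), which in the applications must be checked case by case — together with the sign and relabelling bookkeeping that puts the three moving-frame sums into precisely the stated shape. Alternatively one could derive the formula from the presentation of $G$ as the homogeneous space $(G\times K)/\Delta(K)$ and the Park--Sakane Ricci formula for diagonal metrics, as in \cite{ArMoSa,PaSa}; in that route the only delicate point is to identify the bracket of the reductive complement $\Omega\cong\g$, carried over to $\g$, with the structure constants entering $(ijk)$.
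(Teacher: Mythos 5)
Your derivation is correct, and it is worth noting that the paper itself does not prove this lemma at all: it is quoted from \cite{ArMoSa} and \cite{PaSa}, where it is obtained by viewing $G$ as the homogeneous space $(G\times K)/\Delta(K)$ and specializing the Park--Sakane Ricci formula for a reductive decomposition with diagonal metric. You instead work directly on the group, using the classical Ricci formula for a left-invariant metric on a unimodular Lie group together with the adapted $B$-orthonormal basis $e^i_\alpha/\sqrt{x_i}$; your bookkeeping of the three terms is right (the trace term gives $\tfrac{1}{2x_k}$ since $\mathrm{tr}(\mathrm{ad}_X^2)=-\tfrac{1}{x_k}$ for $X=e^k_\beta/\sqrt{x_k}$, and the other two sums give exactly the $(kij)$- and $(jki)$-weighted terms after averaging over $\beta$), and your proof of $\sum_{i,j}(kij)=d_k$ via $B$-skewness of $\mathrm{ad}$ is the standard one. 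The Schur-type reduction you flag is indeed the only point needing care, and your caveat matches the paper's own Remark 3.8: even when some $\fk_i$ are isomorphic as Lie algebras they remain inequivalent as $\Ad(K)$-modules (the action factors through different simple factors), so the Ricci tensor stays block-diagonal; the genuinely case-dependent check is the absence of equivalences between $\fk_i$'s and $\m_j$'s. What your route buys is self-containedness and no need to transport brackets through the isomorphism $\Omega\cong\g$; what the cited route buys is that the same machinery covers general diagonal metrics on reductive homogeneous spaces, which is how the formula is stated and reused in the sources.
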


\section{Calculations for non-zero coefficients in the expressions of Ricci tensor}
In this section, we will calculate the non-zero coefficients in the expressions for the components of the Ricci tensor with respect to the given metric (\ref{metric1}). First of all, we classify the exceptional Lie groups without center in $K$ into the following three types, namely $p=2$, $p=3$ and $p=4$, where $p$ represents the number of the simple ideals of $\fk$. For the case of $p=1$, according to the classification \cite{ChKaLi}, there are only $\F_4$-I and $\E_7$-III, the non-naturally reductive Einstein metrics on which were studied in \cite{ChLi} and [Lei], respectively. 

We recall the definition of generalized Wallach spaces. Let $G/K$ be a reductive homogeneous space, where $G$ is a semi-simple compact connected Lie group, $K$ is a connected closed subgroup of $G$, $\g$ and $\fk$ are the corresponding Lie algebras, respectively. If $\m$, the tangent space of $G/K$ at $o=\pi(e)$, can be decomposed into three $\mathrm{ad}(\fk)$-invariant irreducible summands pairwise orthogonal with respect to $B$ as:
\begin{equation*}
\m=\m_1\oplus\m_2\oplus\m_3,
\end{equation*}
satisfying $[\m_i,\m_i]\in\fk$ for $i\in\{1,2,3\}$ and $[\m_i,\m_j]\in\m_k$ for $\{i,j,k\}=\{1,2,3\}$, then we call $G/K$ a generalized Wallach space.

In \cite{ChKaLi} and \cite{Ni}, the authors gave the complete classification of generalized Wallach spaces in the simple case. Here we use the notations in \cite{ChKaLi}, then for $p=2$, there are $\E_6$-III and $\E_8$-II, for $p=3$, there are $\F_4$-II and $\E_8$-I and for $p=4$ there is only $\E_7$-I. For later use, we introduce the following lemma given in \cite{ArDzNi}.
\begin{lemma}\label{iii}
Let $\mathfrak{q}\subset\mathfrak{r}$ be arbitrary subalgebra in $\g$ with $\mathfrak{q}$ simple. Consider in $\mathfrak{q}$ an orthonormal (with respect to $B_\mathfrak{r}$) basis $\{f_j\}(1\leq j\leq \mathrm{dim}(\mathfrak{q}))$. Then 
\begin{equation*}
\sum_{i,j,k=1}^{\mathrm{dim}(\mathfrak{q})}(B_{\mathfrak{r}}([f_i,f_j],f_k))^2 =\alpha_{\mathfrak{q}}^{\mathfrak{r}}\cdot\mathrm{dim}(\mathfrak{q}),
\end{equation*}
where $\alpha_{\mathfrak{q}}^\mathfrak{r}$ is determined by the equation $B_{\mathfrak{q}}=\alpha_{\mathfrak{q}}^\mathfrak{r}\cdot B_{\mathfrak{r}}|_{\mathfrak{q}}$.
\end{lemma}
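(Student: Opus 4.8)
The plan is to recognize the left-hand side as $-\mathrm{tr}\,\mathcal{C}$, where $\mathcal{C}=\sum_i\mathrm{ad}_{\mathfrak{q}}(f_i)^2$ is the Casimir-type operator of the adjoint representation of $\mathfrak{q}$ built from the basis $\{f_j\}$, and then to evaluate $\mathcal{C}$ by Schur's lemma, using that $\mathfrak{q}$ is simple.

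First I would use that $\mathfrak{q}$ is a subalgebra: $[f_i,f_j]\in\mathfrak{q}$ for all $i,j$, so expanding in the $B_{\mathfrak{r}}$-orthonormal basis $\{f_k\}$ of $\mathfrak{q}$ gives $\sum_k\big(B_{\mathfrak{r}}([f_i,f_j],f_k)\big)^2=B_{\mathfrak{r}}([f_i,f_j],[f_i,f_j])$. Since $B_{\mathfrak{r}}$ is $\mathrm{ad}(\mathfrak{r})$-invariant and $\mathfrak{q}$ is stable under each $\mathrm{ad}(f_i)$, the operator $\mathrm{ad}(f_i)$ restricts to a skew-symmetric endomorphism of $(\mathfrak{q},B_{\mathfrak{r}}|_{\mathfrak{q}})$, hence summing over $i,j$ yields
\begin{equation*}
\sum_{i,j,k}\big(B_{\mathfrak{r}}([f_i,f_j],f_k)\big)^2=\sum_{i,j}B_{\mathfrak{r}}\big(\mathrm{ad}(f_i)f_j,\mathrm{ad}(f_i)f_j\big)=-\sum_{j}B_{\mathfrak{r}}\big(\mathcal{C}f_j,\,f_j\big),\qquad \mathcal{C}:=\sum_i\mathrm{ad}_{\mathfrak{q}}(f_i)^2 .
\end{equation*}

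Next I would observe that $\mathcal{C}$ commutes with $\mathrm{ad}_{\mathfrak{q}}(X)$ for every $X\in\mathfrak{q}$ (the standard property of Casimir-type elements assembled from an orthonormal basis), and that $\mathcal{C}$ is symmetric with respect to $B_{\mathfrak{r}}|_{\mathfrak{q}}$, which is an inner product on $\mathfrak{q}$ (it is a positive multiple of the negative Killing form $B_{\mathfrak{q}}$, by simplicity of $\mathfrak{q}$). Hence $\mathcal{C}$ is real-diagonalizable, its eigenspaces are $\mathrm{ad}_{\mathfrak{q}}(\mathfrak{q})$-submodules of $\mathfrak{q}$, and — the adjoint module of a simple $\mathfrak{q}$ being irreducible — $\mathcal{C}=\mu\cdot\mathrm{Id}$ for a single scalar $\mu$. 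Taking traces,
\begin{equation*}
\mu\dim\mathfrak{q}=\sum_i\mathrm{tr}\big(\mathrm{ad}_{\mathfrak{q}}(f_i)^2\big)=\sum_i\kappa_{\mathfrak{q}}(f_i,f_i)=-\sum_i B_{\mathfrak{q}}(f_i,f_i),
\end{equation*}
where $\kappa_{\mathfrak{q}}$ is the Killing form of $\mathfrak{q}$. Since $\mathfrak{q}$ is simple, any two invariant bilinear forms on it are proportional, which is precisely what defines $\alpha_{\mathfrak{q}}^{\mathfrak{r}}$ through $B_{\mathfrak{q}}=\alpha_{\mathfrak{q}}^{\mathfrak{r}}\,B_{\mathfrak{r}}|_{\mathfrak{q}}$; therefore $-\sum_i B_{\mathfrak{q}}(f_i,f_i)=-\alpha_{\mathfrak{q}}^{\mathfrak{r}}\sum_i B_{\mathfrak{r}}(f_i,f_i)=-\alpha_{\mathfrak{q}}^{\mathfrak{r}}\dim\mathfrak{q}$, so $\mu=-\alpha_{\mathfrak{q}}^{\mathfrak{r}}$. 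Substituting into the first display gives $\sum_{i,j,k}\big(B_{\mathfrak{r}}([f_i,f_j],f_k)\big)^2=-\mu\dim\mathfrak{q}=\alpha_{\mathfrak{q}}^{\mathfrak{r}}\dim\mathfrak{q}$, as required.

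I do not expect a genuine obstacle: this is the textbook Casimir-plus-Schur computation, and the only points needing care are (i) verifying that $\mathrm{ad}(f_i)$ really does preserve $\mathfrak{q}$ and act skew-symmetrically there (it does, since $\mathfrak{q}$ is a subalgebra and $B_{\mathfrak{r}}$ is invariant), and (ii) the sign and normalization bookkeeping among $\kappa_{\mathfrak{q}}$, $B_{\mathfrak{q}}$ and $B_{\mathfrak{r}}|_{\mathfrak{q}}$, where one uses $\alpha_{\mathfrak{q}}^{\mathfrak{r}}>0$ (automatic, both forms being negatives of Killing forms of compact Lie algebras). An alternative route is to first treat the special case $\mathfrak{r}=\mathfrak{q}$, where the sum equals $\dim\mathfrak{q}$ exactly, and then deduce the general case by rescaling the orthonormal basis by $(\alpha_{\mathfrak{q}}^{\mathfrak{r}})^{1/2}$ and tracking the resulting power of $\alpha_{\mathfrak{q}}^{\mathfrak{r}}$.
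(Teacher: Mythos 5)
Your proof is correct, but note that the paper itself offers no proof of this lemma: it is imported verbatim from \cite{ArDzNi}, so there is no in-paper argument to compare against, and your write-up should be judged against the standard proof of that cited source. Relative to that, your route through the Casimir operator and Schur's lemma is sound but heavier than necessary. Once you have observed, as you do, that $\sum_k\big(B_{\mathfrak{r}}([f_i,f_j],f_k)\big)^2=B_{\mathfrak{r}}([f_i,f_j],[f_i,f_j])$ because $\mathfrak{q}$ is a subalgebra, you can finish for each fixed $i$ separately: since $\{f_j\}$ is orthonormal for $B_{\mathfrak{r}}|_{\mathfrak{q}}$ and $\mathrm{ad}_{\mathfrak{q}}(f_i)$ is skew-symmetric for this inner product, $\sum_j B_{\mathfrak{r}}\big(\mathrm{ad}(f_i)f_j,\mathrm{ad}(f_i)f_j\big)=-\mathrm{tr}\big(\mathrm{ad}_{\mathfrak{q}}(f_i)^2\big)=-\kappa_{\mathfrak{q}}(f_i,f_i)=B_{\mathfrak{q}}(f_i,f_i)=\alpha_{\mathfrak{q}}^{\mathfrak{r}}$, and summing over $i$ gives $\alpha_{\mathfrak{q}}^{\mathfrak{r}}\cdot\mathrm{dim}(\mathfrak{q})$ with no appeal to irreducibility of the adjoint module; simplicity of $\mathfrak{q}$ is then used only to guarantee that $\alpha_{\mathfrak{q}}^{\mathfrak{r}}$ exists, i.e.\ that $B_{\mathfrak{q}}$ is proportional to $B_{\mathfrak{r}}|_{\mathfrak{q}}$. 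What your longer route buys is the stronger statement $\mathcal{C}=-\alpha_{\mathfrak{q}}^{\mathfrak{r}}\,\mathrm{Id}$, which is not needed for the identity in question, and your closing remark about rescaling a $B_{\mathfrak{q}}$-orthonormal basis is essentially this shorter computation in disguise. All the individual steps you take (skew-symmetry, $\mathcal{C}$ commuting with the representation, symmetry and hence diagonalizability of $\mathcal{C}$, eigenspaces being ideals, the trace evaluation of $\mu$) are correct, so the proof stands as written.
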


For $p=2$, we consider the following metrics on $\g$:
\begin{equation}\label{metric2}
<\ ,\ >=x_1B|_{\fk_1}+x_2B|_{\fk_2}+x_3B|_{\m_1}+x_4B|_{\m_2}+x_5B|_{\m_3}.
\end{equation}

Since $\fk_1,\fk_2$ are the simple ideals of $\fk$ and by the definition of generalized Wallach space, we can easily obtain the possible non-zero coefficients in the expressions for Ricci tensor as follows:
$$(111),(222),(133),(144),(155),(233),(244),(255),(345)$$

According to Lemma \ref{formula1}, we have
\begin{equation*}
\begin{split}
r_1&=\frac{1}{4d_1}\left(\frac{1}{x_1}(111)+\frac{x_1}{x_3^2}(133)+\frac{x_1}{x_4^2}(144)+\frac{x_1}{x_5^2}(155)\right),\\
r_2&=\frac{1}{4d_2}\left(\frac{1}{x_2}(222)+\frac{x_2}{x_3^2}(233)+\frac{x_2}{x_4^2}(244)+\frac{x_2}{x_5^2}(255)\right),\\
r_3&=\frac{1}{2x_3}+\frac{1}{2d_3}(345)\left(\frac{x_3}{x_4x_5}-\frac{x_4}{x_3x_5}-\frac{x_5}{x_3x_4}\right)-\frac{1}{2d_3}\left(\frac{x_1}{x_3^2}(133)+\frac{x_2}{x_3^2}(233)\right),\\
r_4&=\frac{1}{2x_4}+\frac{1}{2d_4}(345)\left(\frac{x_4}{x_3x_5}-\frac{x_3}{x_4x_5}-\frac{x_5}{x_3x_4}\right)-\frac{1}{2d_4}\left(\frac{x_1}{x_4^2}(144)+\frac{x_2}{x_4^2}(244)\right),\\
r_5&=\frac{1}{2x_5}+\frac{1}{2d_5}(345)\left(\frac{x_5}{x_4x_3}-\frac{x_4}{x_3x_5}-\frac{x_3}{x_5x_4}\right)-\frac{1}{2d_5}\left(\frac{x_1}{x_5^2}(155)+\frac{x_2}{x_5^2}(255)\right),
\end{split}
\end{equation*} 
and 
\begin{equation}\label{eqn2}
\begin{split}
&(111)+(133)+(144)+(155)=d_1,\\
&(222)+(233)+(244)+(255)=d_2,\\
&2(133)+2(233)+2(345)=d_3,\\
&2(144)+2(244)+2(345)=d_4,\\
&2(155)+2(255)+2(345)=d_5.
\end{split}
\end{equation}
We used the symmetric property of three indices in $(ijk)$ in above equations.

In order to calculate the non-zero coefficients $(111)$, $(222)$, $(133)$, $(144)$, $(155)$, $(233)$, $(244)$, $(255)$, $(345)$, we should know more about the structure of the corresponding Lie algebras. Since the structure of generalized Wallach space arising from a simple group can be decided by two commutative involutive automorphisms on $\g$, we can learn more information from these two automorphisms. 
\begin{lemma}\label{p=2}
In the case of $p=2$, the non-zero coefficients in the components of Ricci tensor with respect to metric (\ref{metric2}) are as follows:\\
for $\E_6$-III, the non-zero coefficients are
\begin{equation*}
\begin{split}
&(111)=\frac{1}{2}, (133)=0, (144)=\frac{7}{4}, (155)=\frac{3}{4},\\
&(222)=7, (233)=\frac{7}{2}, (244)=\frac{35}{4}, (255)=\frac{7}{4}, (345)=\frac{7}{2},
\end{split}
\end{equation*}
for $\E_8$-II, the non-zero coefficients are 
\begin{equation*}
\begin{split}
&(111)=(222)=\frac{28}{5}, (345)=\frac{256}{15},\\
&(133)=(144)=(233)=(244)=(155)=(255)=\frac{112}{15}.
\end{split}
\end{equation*}
\end{lemma}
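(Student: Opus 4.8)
The plan is to reduce the nine numbers $(ijk)$ to three ingredients: the linear identities in (\ref{eqn2}) (equivalently, the relations $\sum_{i,j}(kij)=d_k$ of Lemma \ref{formula1}); two scaling constants $a_1,a_2$ of the simple ideals of $\fk$, defined by $B_{\fk_s}=a_s\,B|_{\fk_s}$; and the branchings of the modules $\m_a$ under $\fk_1$ and $\fk_2$, read off from the two involutions. The two commuting involutions defining the generalized Wallach space produce three intermediate subalgebras $\mathfrak{h}_a=\fk\oplus\m_a$ ($a=1,2,3$), and in each case both $(\g,\mathfrak{h}_a)$ and $(\mathfrak{h}_a,\fk)$ are irreducible symmetric pairs. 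For $\E_6$-III one has $\fk_1=\mathfrak{su}(2)$ and $\fk_2=\mathfrak{sp}(3)$, with chains
\begin{align*}
\mathfrak{su}(2)\oplus\mathfrak{sp}(3)&\subset\mathfrak{su}(2)\oplus\mathfrak{su}(6)\subset\E_6,\\
\mathfrak{su}(2)\oplus\mathfrak{sp}(3)&\subset\mathfrak{f}_4\subset\E_6,\\
\mathfrak{su}(2)\oplus\mathfrak{sp}(3)&\subset\mathfrak{sp}(4)\subset\E_6,
\end{align*}
so that $\m_1,\m_2,\m_3$ are the isotropy modules of $\SU(6)/\Sp(3)$, of $\F_4/(\Sp(3)\Sp(1))$ and of $\Sp(4)/(\Sp(3)\Sp(1))$; for $\E_8$-II one has $\fk_1\cong\fk_2\cong\so(8)$ (conjugate in $\E_8$, hence $a_1=a_2$), and $\m_1,\m_2,\m_3$ are the vector$\otimes$vector and the two half-spinor$\otimes$half-spinor modules inside $\so(16)\supset\so(8)\oplus\so(8)$, where $\so(16)$ sits in $\E_8$ in three ways permuted by triality. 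These identifications fix $d_1,\dots,d_5$ and the two branchings.

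The computational engine is the identity, immediate from the definition of $(ijk)$ and the skew-symmetry of $\mathrm{ad}$,
\begin{equation*}
(k,j,j)=\mathrm{tr}\!\left(\mathrm{Cas}_{\fk_k}\big|_{\fk_j}\right)\qquad(1\le k\le p,\ 1\le j\le p+q),
\end{equation*}
where $\mathrm{Cas}_{\fk_k}$ is the Casimir of $\fk_k$ with respect to $B|_{\fk_k}$. For $j>p$ the module $\fk_j=\m_{j-p}$ is $\fk$-irreducible, so $\mathrm{Cas}_{\fk_k}$ acts on it by a scalar; that scalar is $a_k$ times the Casimir constant of the irreducible $\fk_k$-type to which $\fk_j$ belongs (normalised so that the adjoint of $\fk_k$ has Casimir $1$), and these constants are the classical ones for $\mathfrak{su}(2),\mathfrak{sp}(3),\so(8)$ --- in particular the vector and the two half-spinor modules of $\so(8)$ share one value, by triality. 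The $a_k$ are computed along $\fk_k\subset\mathfrak{h}_a\subset\g$: for a simple $\mathfrak{h}_a$, the compact irreducible symmetric space $G/H_a$ has $\mathrm{Ric}=\tfrac12 B|_{\mathfrak{q}_a}$ (see \cite{Be}), equivalently $\mathrm{Cas}_{\mathfrak{h}_a,B}|_{\mathfrak{q}_a}=\tfrac12\,\mathrm{Id}$, which forces $B_{\mathfrak{h}_a}=\tfrac{3\dim\mathfrak{h}_a-\dim\g}{2\dim\mathfrak{h}_a}\,B|_{\mathfrak{h}_a}$; and the inner inclusion $\fk_k\subset\mathfrak{h}_a$ is a natural one of Dynkin index $1$, for which $B_{\fk_k}=\tfrac{h^\vee(\fk_k)}{h^\vee(\mathfrak{h}_a)}\,B|_{\fk_k}$. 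For $\E_6$-III this yields $a_1=\tfrac25\cdot\tfrac{5}{12}=\tfrac16$ (via $\mathfrak{su}(2)\subset\mathfrak{sp}(4)\subset\E_6$) and $a_2=\tfrac23\cdot\tfrac12=\tfrac13$ (via $\mathfrak{sp}(3)\subset\mathfrak{su}(6)\subset\E_6$, both steps of index $1$); for $\E_8$-II, $a_1=a_2=\tfrac37\cdot\tfrac{7}{15}=\tfrac15$.

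The rest is assembly. The diagonal triples $(111),(222)$ come from $(k,k,k)=a_k d_k$, which is Lemma \ref{iii}; each mixed triple $(k,j,j)$ comes from the identity above together with the $\fk_k$-branching of $\fk_j$ read from the symmetric-space structure --- e.g.\ for $\E_6$-III, $\m_1$ is $\fk_1$-trivial so $(133)=0$, while $\m_2$ over $\mathfrak{sp}(3)$ is of the $14$-dimensional type $\Lambda^3_0\mathbb{C}^6$ and $\m_3$ over $\mathfrak{sp}(3)$ of the $6$-dimensional standard type; and for $\E_8$-II all three $\m_a$ are $\so(8)$-isotypic of one of the triality-related $8$-dimensional types, all of equal Casimir, giving $(133)=(144)=(155)$. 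Finally $(345)$ is read off from any one of the last three equations of (\ref{eqn2}), the three choices providing a built-in consistency check. As a further check, Lemma \ref{iii} applied to the simple subalgebras $\mathfrak{su}(6),\mathfrak{f}_4,\mathfrak{sp}(4)$ (respectively to $\so(16)$) must return the combinations $(222)+3(233)$, $(111)+(222)+3(144)+3(244)$ and $(111)+(222)+3(155)+3(255)$.

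I expect the only real difficulty to be bookkeeping, on two fronts. First, one must identify the two involutions precisely --- in particular \emph{which} of the two inequivalent $14$-dimensional irreducible $\mathfrak{sp}(3)$-modules occurs in $\m_2$ for $\E_6$-III, and the triality-twisted identification of the half-spinor$\otimes$half-spinor modules for $\E_8$-II, since the Casimir constant depends on this choice. Second, one must keep the bilinear forms $B$, $B_{\fk_k}$, $B_{\mathfrak{h}_a}$ and the Dynkin indices of the various intermediate embeddings straight, as a normalisation slip here produces a wrong but entirely plausible value for some $(ijk)$.
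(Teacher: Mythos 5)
Your proposal is correct, and I checked that it reproduces all the stated values (e.g.\ with $a_1=\tfrac16$, $a_2=\tfrac13$ and the types $\Lambda^2_0\mathbb{C}^6$, $\mathbb{C}^2\otimes\Lambda^3_0\mathbb{C}^6$, $\mathbb{C}^2\otimes\mathbb{C}^6$ one gets $(233)=\tfrac72$, $(144)=\tfrac74$, $(244)=\tfrac{35}{4}$, $(255)=\tfrac74$, and for $\E_8$-II the scalar $\tfrac15\cdot\tfrac7{12}$ on each $64$-dimensional module gives $\tfrac{112}{15}$), but it is a genuinely different route from the paper's. The paper never identifies the $\fk_k$-isotypic types of the $\m_a$ nor computes Casimir eigenvalues: it exploits the fact that when suitable parameters of the metric (\ref{metric2}) are set equal, the metric coincides with a metric adapted to one of the two symmetric decompositions (\ref{E6III-1}), (\ref{E6III-2}), so the corresponding Ricci components must agree; equating the expressions from Lemma \ref{formula1} yields the linear relations (\ref{eqn21}), (\ref{eqn22}), which together with (\ref{eqn2}), Lemma \ref{iii} for $(111),(222)$, and the value $(345)$ imported from Table~1 of \cite{Ni} determine everything by linear algebra. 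Your Casimir-trace computation $(kjj)=\mathrm{tr}\,\mathrm{Cas}_{\fk_k}|_{\fk_j}$ buys self-containedness --- you derive $(345)$ from (\ref{eqn2}) with redundancy as a check instead of citing \cite{Ni} --- at the price of having to pin down the branchings exactly (the $\Lambda^2_0$ versus $\Lambda^3_0$ distinction you flag is precisely what separates $(233)$ from $(244)$), whereas the paper's coincidence-of-metrics trick only needs the isomorphism types of the intermediate subalgebras $A_1\oplus A_5$, $\F_4$, $D_8$. Two small points to tighten: the scalar action of $\mathrm{Cas}_{\fk_k}$ on $\m_a$ follows because an $\Ad(K)$-irreducible module is \emph{isotypic} under each simple ideal (an outer tensor product), not merely because it is irreducible; and your symmetric-space formula for $B_{\mathfrak{h}_a}$ applies only when $\mathfrak{h}_a$ is simple ($\F_4$, $\mathfrak{sp}(4)$, $\so(16)$), so the step $\alpha_{\mathfrak{su}(6)}^{\E_6}=\tfrac12$ in your chain for $a_2$ must come from the Dynkin-index argument (or from the alternative chain through $\mathfrak{sp}(4)$), since the full fixed-point algebra there is $A_1\oplus A_5$, which is not simple.
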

\begin{proof}
Case of $\E_6$-III. For this case, we denote the two commutative involutive automorphisms by $\theta$ and $\tau$. By the conclusions in \cite{ChKaLi}, we know that each of the automorphisms corresponds to an irreducible symmetric pair. For the structure of $\E_6$-III, we have the following decomposition:
\begin{equation*}
\g=A_1\oplus C_3\oplus\p_1\oplus\p_2\oplus\p_3.
\end{equation*}
Let
\begin{equation}\label{E6III-1}
\g=\mathfrak{b}+\p, \quad \mathfrak{b}=\mathfrak{b}_1+\mathfrak{b}_2, \quad \ffb_1=A_1, \quad \mathfrak{b}_2=C_3+\m_1\cong A_5, \quad \p=\m_2+\m_3,
\end{equation}
then $(\g,\mathfrak{b})$ is in fact the irreducible symmetric pair corresponding to $\theta$.\\
Let
\begin{equation}\label{E6III-2}
\g=\mathfrak{b'}+\p', \quad \mathfrak{b}'=A_1+C_3+\m_2\cong\F_4, \quad \p'=\m_1+\m_3,
\end{equation}
then $(\g,\mathfrak{b}')$ is in fact the irreducible symmetric pair corresponding to $\tau$.
We consider the following metric on $\g$ with respect to the decomposition (\ref{E6III-1})
\begin{equation}\label{metric21}
<<\ ,\ >>_1=u_1B|_{A_1}+u_2B|_{A_5}+u_3|_\p,
\end{equation}
and denote the components of the Ricci tensor with respect to the metric $<<,>>_1$ by $\tilde{r}_1$, $\tilde{r}_2$ and $\tilde{r}_3$. If we let $x_1=u_1, x_2=x_3=u_2, x_4=x_5=u_3$, then the metric given by (\ref{metric2}) and (\ref{metric21}) are the same, thus their corresponding Ricci tensor are also the same, which means $r_1=\tilde{r}_1$, $r_2=r_3=\tilde{r}_2$, $r_4=r_5=\tilde{r}_3$. As a result, from the expressions of each Ricci components, we have
\begin{equation}\label{eqn21}
\begin{split}
&\frac{1}{4d_2}\left((222)+(233)\right)=\frac{1}{4}-\frac{1}{2d_3}(345),\\
&\frac{1}{4d_2}\left((244)+(255)\right)=\frac{1}{2d_3}(345),\\
&\frac{1}{2d_4}(144)=\frac{1}{2d_5}(155),\\
&(133)=0.
\end{split}
\end{equation}
With the same method, we consider the irreducible symmetric pair corresponding to the involutive automorphism $\tau$. The metric taken into consideration is as follows:
\begin{equation}\label{metric22}
<<\ ,\ >>_2=w_1B|_{\F_4}+w_2B|_{\p'},
\end{equation}
If we let $x_1=x_2=x_4=w_1$ and $x_3=x_5=w_2$, the Ricci tensors with respect to (\ref{metric2}) and (\ref{metric22}) are the same, by comparing the expression for both components, we have
\begin{equation}\label{eqn22}
\begin{split}
&\frac{1}{4d_1}\left((111)+(144)\right)=\frac{1}{4d_2}\left((222)+(244)\right)=\frac{1}{4}-\frac{1}{2d_4}(345),\\
&\frac{1}{4d_1}\left((133)+(155)\right)=\frac{1}{4d_2}\left((233)+(255)\right)=\frac{1}{2d_4}(345).
\end{split}
\end{equation}
We can easily calculate $\alpha_{A_1}^{\E_6}=\frac{8}{48}$, and $\alpha_{C_3}^{\E_6}=\frac{16}{48}$, since their corresponding roots have the same length, we obtain $(111)=\frac{1}{2}$ and $(222)=7$ according to Lemma \ref{iii}. From Table 1 in \cite{Ni}, we can find out $(345)=\frac{7}{2}$, along with the equations (\ref{eqn2}), (\ref{eqn21}) and (\ref{eqn22}) and $d_1=3$, $d_2=21$, $d_3=14$, $d_4=28$ and $d_5=12$, one can easily get the solutions given in the lemma.

Case of $\E_8$-II. For this case, we have the following decomposition:
\begin{equation*}
\g=D_4\oplus D_4\oplus\m_1\oplus\m_2\oplus\m_3.
\end{equation*}
Then we study the two commutative involutive automorphisms denoted by $\theta$ and $\tau$. In fact, the corresponding irreducible symmetric pair $(\g,\mathfrak{b})$ and $(\g,\mathfrak{b}')$ of $\theta$ and $\tau$ are respectively as follows:
\begin{equation*}\label{E8II-1}
\g=\mathfrak{b}+\p, \quad \mathfrak{b}=D_4+D_4+\m_1\cong D_8, \quad \p=\m_2+\m_3.
\end{equation*}
\begin{equation*}\label{E8II-2}
\g=\mathfrak{b}'+\p', \quad \mathfrak{b}'=D_4+D_4+\m_2\cong D_8, \quad \p=\m_1+\m_3.
\end{equation*}
By the same methods operated in the case of $\E_6$-III, we have

\begin{equation*}
\begin{split}
&(111)=(222)=\frac{28}{5}, (345)=\frac{256}{15},\\
&(133)=(144)=(155)=(233)=(244)=(255)\frac{112}{15}.
\end{split}
\end{equation*}
\end{proof}

For $p=3$, we have the following decomposition:
\begin{equation*}
\g=\fk_1+\fk_2+\fk_3+\m_1+\m_2+\m_3,
\end{equation*}
and we consider the metric as follows: 
\begin{equation}\label{metric3}
<\ ,\ >=x_1B|_{\fk_1}+x_2B|_{\fk_2}+x_3B|_{\fk_3}+x_4B|_{\p_1}+x_5B|_{\p_2}+x_6B|_{\p_3}.
\end{equation}
Since $\fk_i(i=1,2,3)$ is simple ideal of $\fk$ and according to the structure of generalized Wallach space, it is easy to know that the possible non-zero coefficients in the expression for the components of Ricci tensor with respect to the metric (\ref{metric3}) are as follows:
\begin{equation*}
(111), (222), (333), (144), (155), (166), (244), (255), (266), (344), (355), (366), (456).
\end{equation*}
Then the components of the Ricci tensor with respect to this metric are as follows according to the Lemma \ref{formula1}: 
\begin{equation*}
\begin{split}
r_1&=\frac{1}{4d_1}\left(\frac{1}{x_1}(111)+\frac{x_1}{x_4^2}(144)+\frac{x_1}{x_5^2}(155)+\frac{x_1}{x_6^2}(166)\right),\\
r_2&=\frac{1}{4d_2}\left(\frac{1}{x_2}(222)+\frac{x_2}{x_4^2}(244)+\frac{x_2}{x_5^2}(255)+\frac{x_2}{x_6^2}(266)\right),\\
r_3&=\frac{1}{4d_3}\left(\frac{1}{x_3}(333)+\frac{x_3}{x_4^2}(344)+\frac{x_3}{x_5^2}(355)+\frac{x_3}{x_6^2}(366)\right),\\
r_4&=\frac{1}{2x_4}+\frac{1}{2d_4}(456)\left(\frac{x_4}{x_5x_6}-\frac{x_6}{x_4x_5}-\frac{x_5}{x_4x_6}\right)-\frac{1}{2d_4}\left(\frac{x_1}{x_4^2}(144)+\frac{x_2}{x_4^2}(244)+\frac{x_3}{x_4^2}(344)\right),\\
r_5&=\frac{1}{2x_5}+\frac{1}{2d_5}(456)\left(\frac{x_5}{x_4x_6}-\frac{x_6}{x_4x_5}-\frac{x_4}{x_5x_6}\right)-\frac{1}{2d_5}\left(\frac{x_1}{x_5^2}(155)+\frac{x_2}{x_5^2}(255)+\frac{x_3}{x_5^2}(355)\right),\\
r_6&=\frac{1}{2x_6}+\frac{1}{2d_6}(456)\left(\frac{x_6}{x_5x_4}-\frac{x_4}{x_6x_5}-\frac{x_5}{x_4x_6}\right)-\frac{1}{2d_6}\left(\frac{x_1}{x_6^2}(166)+\frac{x_2}{x_4^2}(266)+\frac{x_3}{x_4^2}(366)\right).
\end{split}
\end{equation*}
and the following equations:
\begin{equation}\label{eqn3}
\begin{split}
&(111)+(144)+(155)+(166)=d_1,\\
&(222)+(244)+(255)+(266)=d_2,\\
&(333)+(344)+(355)+(366)=d_3,\\
&2(144)+2(244)+2(344)+2(456)=d_4,\\
&2(155)+2(255)+2(355)+2(456)=d_5,\\
&2(166)+2(266)+2(366)+2(456)=d_6.
\end{split}
\end{equation}

\begin{lemma}\label{p=3}
The possible non-zero coefficients in the expression for the components of Ricci tensor with respect to the metric (\ref{metric3}) are as follows:

for case of $\F_4$-II, we have 
\begin{equation*}
\begin{split}
&(111)=(222)=\frac{2}{3}, (333)=\frac{10}{3}, (144)=(244)=\frac{5}{3}, (155)=(266)=0, \\
&(166)=(255)=\frac{2}{3}, (355)=(366)=\frac{10}{9}, (344)=\frac{40}{9},(456)=\frac{20}{9}.
\end{split}
\end{equation*}

for case of $\E_8$-I, we have 
\begin{equation*}
\begin{split}
&(111)=(222)=\frac{1}{5},(333)=22,(144)=(244)=\frac{6}{5},(155)=(266)=0,\\
&(166)=(255)=\frac{8}{5},(344)=\frac{44}{5},(355)=(366)=\frac{88}{5},(456)=\frac{64}{5}.
\end{split}
\end{equation*}
\end{lemma}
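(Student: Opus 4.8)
The plan is to imitate, for each of the two cases $\F_4$-II and $\E_8$-I, exactly the device already used in Lemma~\ref{p=2}: exploit the two commutative involutive automorphisms of $\g$ coming from the classification in \cite{ChKaLi}, which give two distinct symmetric decompositions of $\g$ superimposed on the generalized Wallach decomposition $\g=\fk_1\oplus\fk_2\oplus\fk_3\oplus\m_1\oplus\m_2\oplus\m_3$. First I would identify, for the involution $\theta$, the symmetric pair $(\g,\ffb)$ with $\ffb=\ffb_1\oplus\ffb_2$ and record which of the summands $\fk_i,\m_j$ assemble into $\ffb_1$, $\ffb_2$ and into $\p$; the classification tables tell us these isotypic recombinations (e.g.\ for $\F_4$-II one of the $\ffb$-factors is $B_4=\so(9)$ built from two of the $\fk_i$ together with one $\m_j$, and $\p$ is the sum of the remaining two $\m_j$'s, each an $8$-dimensional spin module). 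Do the same for $\tau$, getting $(\g,\ffb')$. Each symmetric pair collapses the six-parameter metric \eqref{metric3} to a three- or two-parameter metric on $\g$, and equating the Ricci components of \eqref{metric3} — specialized to the relevant values of the $x_i$ — with the Ricci components of that reduced metric yields a batch of linear relations among the thirteen structure constants $(ijk)$, in the same way equations \eqref{eqn21} and \eqref{eqn22} were produced. In particular the vanishing coefficients $(155)=(266)=0$ should drop out directly from the symmetric-pair condition $[\ffb,\p]\subset\p$, $[\p,\p]\subset\ffb$ applied to the appropriate triple of summands (one $\fk_i$ and two $\m_j$'s lying in different $\ffb$-factors or on opposite sides of a symmetric splitting).

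Next I would pin down the ``diagonal'' constants $(iii)$ for the simple ideals $\fk_i$ by Lemma~\ref{iii}: compute the ratios $\alpha_{\fk_i}^{\g}$ from the known embeddings (for $\fk_i$ of type $A_1$ the index is read off from the $\SU(2)$-embedding, for $\so(9)$ inside $\F_4$ or $\so(12)$ inside $\E_8$ from the standard Dynkin indices), whence $(iii)=\alpha_{\fk_i}^{\g}\cdot d_i$; this is how $(111)=\tfrac12$, $(222)=7$ were obtained before. Similarly the ``pure'' constant $(456)$ — which is $\sum(A_{\alpha\beta}^\gamma)^2$ over the generalized Wallach triple $(\m_1,\m_2,\m_3)$ — is supplied by the tables in \cite{Ni}: the structure constants of generalized Wallach spaces are tabulated there (the $a_i$'s of the Wallach data), and $(456)$ is a fixed rational multiple of $d_4=d_5=d_6$. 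With $(iii)$ and $(456)$ known, the linear system consisting of \eqref{eqn3} together with the relations harvested from the two symmetric pairs is (I expect) of full rank on the remaining unknowns $(144),(155),(166),(244),(255),(266),(344),(355),(366)$, so solving it is pure linear algebra; substituting the dimensions $d_1,\dots,d_6$ for each group finishes the computation and should reproduce precisely the values stated in the lemma.

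The main obstacle I anticipate is not the linear algebra but the bookkeeping of the isotypic recombinations: one must be certain \emph{which} $\fk_i$ and \emph{which} $\m_j$ land in $\ffb_1$ versus $\ffb_2$ (and likewise for $\ffb'$), because a wrong assignment permutes the roles of $\m_1,\m_2,\m_3$ and scrambles the coefficients. Concretely, for $\F_4$-II with $\fk=\SU(2)\times\SU(2)\times\SO(5)$ I would need the two $\SU(2)$'s to be distinguishable by how they pair with the $\m_j$'s (reflected in the asymmetry $(144)=(244)$ but $(155)=0\neq(255)$, $(166)\neq 0=(266)$), and for $\E_8$-I with $\fk=\SU(2)\times\SU(2)\times\SO(12)$ the analogous pattern; getting these incidences right requires consulting the explicit branching data in \cite{ChKaLi} rather than guessing. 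A secondary check is consistency: the values produced must satisfy \emph{all} of \eqref{eqn3} and the trace identity $\sum_{ij}(kij)=d_k$ of Lemma~\ref{formula1} simultaneously, which gives an overdetermined cross-validation — if the numbers in the lemma statement survive that check, the proof is complete. I would present the $\F_4$-II computation in full and then remark that $\E_8$-I is entirely parallel, only the dimensions and the indices $\alpha_{\fk_i}^{\g}$ changing.
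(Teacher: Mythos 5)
Your proposal is essentially the paper's own proof: the authors use exactly the two symmetric pairs you describe (for $\F_4$-II, $\ffb=A_1^1+A_1^2+C_2+\m_1\cong B_4$ and $\ffb'=A_1^1\oplus(A_1^2+C_2+\m_2\cong C_3)$; for $\E_8$-I, $\ffb\cong D_8$ and $\ffb'=A_1^1\oplus(A_1^2+D_6+\m_2\cong\E_7)$), compare the Ricci components of the collapsed metrics to harvest the linear relations (\ref{eqn31}), (\ref{eqn32}), compute $(111),(222),(333)$ via Lemma~\ref{iii} and take $(456)$ from \cite{Ni}, then solve the resulting linear system together with (\ref{eqn3}) and the dimensions $d_i$. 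The only slip is the parenthetical for $\F_4$-II: the subalgebra $B_4\cong\so(9)$ is built from \emph{all three} simple ideals of $\fk$ (both $A_1$'s and $C_2\cong\so(5)$) together with $\m_1$, not from two of the $\fk_i$ — precisely the bookkeeping issue you yourself flagged, and it does not affect the method.
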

\begin{proof}
Case of $\F_4$-II. We have the following decomposition according to the structure of $\F_4$-II:
\begin{equation*}
\g=A_1^1+A_1^2+C_2+\m_1+\m_2+\m_3.
\end{equation*}
If we let
\begin{equation*}
\frak{b}=A_1^1+A_1^2+C_2+\m_1,\quad \p=\m_2+\m_3,
\end{equation*}
then $(\g,\frak{b})$ is an irreducible symmetric pair. In fact, this decomposition can be achieved by the first involutive automorphism $\theta$, which means $\frak{b}\cong B_4$, therefore simple. Now, we consider the following metric on $\g$: 
\begin{equation}\label{metric31}
<<\ ,\ >>_1=u_1B|_{\frak{b}}+u_2B|_{\p},
\end{equation}
this metric is the same as the one in (\ref{metric3}) if we let $x_1=x_2=x_3=x_4=u_1$ and $x_5=x_6=u_2$. As a result, if we denote the components of the Ricci tensor with respect to (\ref{metric3}) by $\tilde{r}_1$ and $\tilde{r}_2$, then it holds $r_1=r_2=r_3=r_4=\tilde{r}_1$ and $r_5=r_6=\tilde{r}_2$. With comparing these equations, we have 
\begin{equation}\label{eqn31}
\begin{split}
&\frac{1}{4d_1}((111)+(144))=\frac{1}{4d_2}((222)+(244))=\frac{1}{4d_3}((333)+(344))=\frac{1}{4}-\frac{1}{2d_4}(456),\\
&\frac{1}{4d_1}((155)+(166))=\frac{1}{4d_2}((255)+(266))=\frac{1}{4d_3}((355)+(366))=\frac{1}{2d_4}(456).
\end{split}
\end{equation}
We consider the following decomposition of $\g$,
\begin{equation*}
\g=\frak{b}'+\p'=\frak{b}'_1+\frak{b}'_2+\p',
\end{equation*}
where $\frak{b}'=A_1^1+A_1^1+C_2+\m_2,\frak{b}'_1=A_1^1,\frak{b}'_2=A_1^2+C_2+\m_2,\p=\m_1+\m_3$. In fact, $\frak{b}'_2\cong C_3$, and this decomposition is corresponding to the second involutive automorphism $\tau$ on $\g$. Hence, $(\g,\frak{b}')$ is an irreducible symmetric pair. Now, we consider the metric on $\g$ as follows:
\begin{equation}\label{metric32}
<<\ ,\ >>_2=w_1B|_{A_1^1}+w_2B|_{C_3}+w_3B|_{\p^{'}}.
\end{equation}
If we set $x_1=w_1, x_2=x_3=x_5=w_2, x_4=x_6=w_3$ in (\ref{metric3}), then the two metrics are the same, as a result, we can get the following equations:
\begin{equation}\label{eqn32}
\begin{split}
&\frac{1}{4d_2}((222)+(255))=\frac{1}{4d_3}((333)+(355))=\frac{1}{4}-\frac{1}{2d_5}((456)-(155)),\\
&\frac{1}{4d_2}((244)+(266))=\frac{1}{4d_3}((344)+(366))=\frac{1}{2d_5}(456),\\
&\frac{1}{2d_5}(155)=0, \frac{1}{2d_4}(144)=\frac{1}{2d_6}(166).
\end{split}
\end{equation}
We can easily calculate $\alpha_{A_1}^{\F_4}=\frac{8}{36}$ and $\alpha_{C_2}^{\F_4}=\frac{12}{36}$ both with the long roots of $\F_4$, therefore, $(111)=(222)=\frac{2}{3}$ and $(333)=\frac{10}{3}$ according to Lemma \ref{iii}, besides we get $(456)=\frac{20}{9}$ from \cite{Ni}. Thus, along with the above equations (\ref{eqn3}), (\ref{eqn31}) and (\ref{eqn32}), one can easily get the solutions as the lemma, here $d_1=3, d_2=3, d_3=10, d_4=20, d_5=8,d_6=8$.

Case of $\E_8$-I. According to \cite{ChKaLi}, $\E_8$-I has the following decomposition:
\begin{equation*}
\g=A_{1}^{1}+A_{1}^{2}+D_6+\m_1+\m_2+\m_3.
\end{equation*}
According to the two commutative involutive automorphisms on $\E_8$-I in \cite{ChKaLi}, we have the following two decompositions which make $(\g,\ffb)$ and $(\g,\ffb')$ be two different irreducible symmetric pairs.
\begin{equation*}
\begin{split}
&\g=\ffb+\p, \ffb=A_{1}^{1}+A_{1}^{2}+D_6+\m_1\cong D_8, \p=\m_2+\m_3,\\
\g=\ffb'+\p', \ffb'=\ffb'_1+&\ffb'_2=A_1+A_1+D_6+\m_2, \ffb'_1=A_{1}^{1}, \ffb'_2=A_{1}^{2}+D_6+\m_2\cong \E_7, \p'=\m_1+\m_3.
\end{split}
\end{equation*}
By using the same methods above, we can calculate $(111)=(222)=\frac{1}{5}$ and $(333)=22$ by Lemma \ref{iii} and get $(456)=\frac{64}{5}$, as a result, we have
\begin{equation*}
\begin{split}
&(111)=(222)=\frac{1}{5},(333)=22,(144)=(244)=\frac{6}{5},(155)=(266)=0,\\
&(166)=(255)=\frac{8}{5},(344)=\frac{44}{5},(355)=(366)=\frac{88}{5},(456)=\frac{64}{5}.
\end{split}
\end{equation*}
\end{proof}

For $p=4$, there is only $\E_7$-I in this type, and the decomposition according to \cite{ChKaLi} is 
\begin{equation*}
\g=A_1^1+A_1^2+A_1^3+D_4+\p_1+\p_2+\p_3.
\end{equation*}
The considered metric is of the following form:
\begin{equation}\label{metric4}
 <\ ,\ >=x_1B|_{A_1^1}+x_2B|_{A_1^2}+x_3B|_{A_1^3}+x_4B|_{D_4}+x_5B|_{\m_1}+x_6B|_{\m_2}+x_7B|_{\m_3},
\end{equation}
It is easy to know that the possible non-zero coefficients in the expression for components of Ricci tensor with respect to the metric given by (\ref{metric4}) are
\begin{equation*}
(111), (222), (333), (444), (155), (166), (177), (255),(266), (277), (355), (366), (377), (455), (466), (477).
\end{equation*}
As a result, the components of Ricci tensor are:
\begin{equation*}
\begin{split}
r_1&=\frac{1}{4d_1}\left(\frac{1}{x_1}(111)+\frac{x_1}{x_5^2}(155)+\frac{x_1}{x_6^2}(166)+\frac{x_1}{x_7^2}(177)\right),\\
r_2&=\frac{1}{4d_2}\left(\frac{1}{x_2}(222)+\frac{x_2}{x_5^2}(255)+\frac{x_2}{x_6^2}(266)+\frac{x_2}{x_7^2}(277)\right),\\
r_3&=\frac{1}{4d_3}\left(\frac{1}{x_3}(333)+\frac{x_3}{x_5^2}(355)+\frac{x_3}{x_6^2}(366)+\frac{x_3}{x_7^2}(377)\right),\\
r_4&=\frac{1}{4d_4}\left(\frac{1}{x_4}(444)+\frac{x_4}{x_5^2}(455)+\frac{x_4}{x_6^2}(466)+\frac{x_4}{x_7^2}(477)\right),\\
r_5&=\frac{1}{2x_5}+\frac{1}{2d_5}(567)\left(\frac{x_5}{x_6x_7}-\frac{x_6}{x_7x_5}-\frac{x_7}{x_5x_6}\right)-\frac{1}{2d_5}\left(\frac{x_1}{x_5^2}(155)+\frac{x_2}{x_5^2}(255)+\frac{x_3}{x_5^2}(355)+\frac{x_4}{x_5^2}(455)\right),\\
r_6&=\frac{1}{2x_6}+\frac{1}{2d_6}(567)\left(\frac{x_6}{x_5x_7}-\frac{x_7}{x_6x_5}-\frac{x_5}{x_7x_6}\right)-\frac{1}{2d_6}\left(\frac{x_1}{x_6^2}(166)+\frac{x_2}{x_6^2}(266)+\frac{x_3}{x_6^2}(366)+\frac{x_4}{x_6^2}(466)\right),\\
r_7&=\frac{1}{2x_7}+\frac{1}{2d_7}(567)\left(\frac{x_7}{x_5x_6}-\frac{x_5}{x_6x_7}-\frac{x_6}{x_7x_6}\right)-\frac{1}{2d_7}\left(\frac{x_1}{x_7^2}(177)+\frac{x_2}{x_7^2}(277)+\frac{x_3}{x_7^2}(377)+\frac{x_4}{x_7^2}(477)\right).
\end{split}
\end{equation*}
\begin{lemma}\label{p=4}
The possible non-zero coefficients in the expression for components of Ricci tensor with respect to the metric given by (\ref{metric4}) are as follows:
\begin{equation*}
\begin{split}
&(111)=(222)=(333)=\frac{1}{3},(444)=\frac{28}{3},(567)=\frac{64}{9},\\
&(166)=(177)=(255)=(277)=(355)=(366)=\frac{4}{3},\\
&(155)=(266)=(377)=0,(455)=(466)=(477)=\frac{56}{9}.
\end{split}
\end{equation*}
\end{lemma}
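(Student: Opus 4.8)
The plan is to follow verbatim the template used in the proofs of Lemmas \ref{p=2} and \ref{p=3}: feed into the Ricci formula of Lemma \ref{formula1} the symmetric--pair decompositions of $\g=\mathfrak{e}_7$ that are built into the generalized Wallach structure of $\E_7$-I, specialize the metric \eqref{metric4} so that it becomes a metric with fewer summands on one of these symmetric pairs, and then compare the corresponding components of the Ricci tensor to read off linear relations among the coefficients $(ijk)$. The structural input I would use is the following. For each $i\in\{1,2,3\}$ the subspace $\ffb_i:=\fk\oplus\m_i$ is a subalgebra of $\g$ (since $[\m_i,\m_i]\subset\fk$), and $(\g,\ffb_i)$ is an irreducible symmetric pair with isotropy complement $\m_j\oplus\m_k$, $\{i,j,k\}=\{1,2,3\}$; moreover $\ffb_i$ is \emph{not} simple but splits as a sum of ideals $\ffb_i=A_1^i\oplus\fl_i$ with $\fl_i:=A_1^j\oplus A_1^k\oplus D_4\oplus\m_i\cong D_6$. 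Inside $\fl_i$ there is in turn the symmetric decomposition $D_6\cong\so(12)=\big(\so(4)\oplus\so(8)\big)\oplus\m_i=(A_1^j\oplus A_1^k\oplus D_4)\oplus\m_i$, i.e.\ $\m_i$ is the tangent space of a real Grassmannian $\SO(12)/(\SO(4)\times\SO(8))$. These three ``mirror'' involutions, refined by the internal symmetric structure of the three copies of $D_6$, are exactly what is needed to bring \eqref{metric4} into symmetric--pair form.

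Concretely, for $i=1$ I would set $x_1=w_1$, $x_2=x_3=x_4=x_5=w_2$, $x_6=x_7=w_3$ in \eqref{metric4}; then $\langle\,,\,\rangle$ becomes the metric $w_1B|_{A_1^1}+w_2B|_{\fl_1}+w_3B|_{\m_2\oplus\m_3}$ on the symmetric pair $(\g,A_1^1\oplus\fl_1)$, so $r_1=\tilde r_1$, $r_2=r_3=r_4=r_5=\tilde r_2$, $r_6=r_7=\tilde r_3$. Plugging the explicit expressions for $r_1,\dots,r_7$ into these equalities and demanding that they hold identically in $w_1,w_2,w_3$ forces, first, $(155)=0$ --- the term $\tfrac{x_1}{x_5^2}(155)$ occurs in $r_5$ but in none of $r_2,r_3,r_4$; equivalently $(155)=0$ is nothing but the statement that $A_1^1$ is an ideal summand of $\ffb_1$, so that $[A_1^1,\m_1]=0$ --- and then a batch of linear relations among $(255),(355),(455),(567)$ and among $(266)+(277)$, $(366)+(377)$, $(466)+(477)$, plus one relation tying these to $\tilde r_3$ and hence to $(567)$. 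Running the same argument for $i=2,3$, and invoking the $S_3$--symmetry of the configuration $3A_1\oplus D_4\subset\mathfrak{e}_7$ (the triality of $D_4$ permuting the three $A_1$-factors and the three modules $\m_i$ accordingly, with the $D_4$-factor fixed), yields the remaining vanishings $(266)=(377)=0$ together with the equalities $(166)=(177)=(255)=(277)=(355)=(366)$ and $(455)=(466)=(477)$; the fact that these two common values differ from each other is exactly what the internal $\so(4)\oplus\so(8)$ structure of each $D_6$ detects.

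It then remains to pin down the absolute normalizations and solve a linear system. By Lemma \ref{iii} with $\mathfrak{r}=\g$: since $\mathfrak{e}_7$ is simply laced, the embeddings $A_1^i\hookrightarrow\mathfrak{e}_7$ and $D_4\hookrightarrow\mathfrak{e}_7$ arising in \cite{ChKaLi} have Dynkin index one, hence $\alpha_{A_1}^{\mathfrak{e}_7}=h^\vee(A_1)/h^\vee(\mathfrak{e}_7)=2/18$ and $\alpha_{D_4}^{\mathfrak{e}_7}=h^\vee(D_4)/h^\vee(\mathfrak{e}_7)=6/18$, which gives $(111)=(222)=(333)=\tfrac13$ and $(444)=\tfrac{28}{3}$; and from Table 1 of \cite{Ni} one reads $(567)=\tfrac{64}{9}$. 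Feeding these values, the relations of the previous paragraph, and the normalization identities $\sum_{i,j}(kij)=d_k$ of Lemma \ref{formula1} --- which here read $(111)+(155)+(166)+(177)=d_1$ and its analogues for $d_2,d_3$, then $(444)+(455)+(466)+(477)=d_4$, and $2(155)+2(255)+2(355)+2(455)+2(567)=d_5$ and its analogues for $d_6,d_7$, with $d_1=d_2=d_3=3$, $d_4=28$, $d_5=d_6=d_7=32$ --- into one linear system, one obtains the unique solution displayed in the statement.

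The main obstacle is the structural bookkeeping rather than the arithmetic. One has to verify, from the two commuting involutive automorphisms of \cite{ChKaLi}, exactly how $3A_1\oplus D_4$ sits inside $\mathfrak{e}_7$: that $A_1^j\oplus A_1^k\oplus D_4\oplus\m_i$ is indeed a copy of $D_6=\so(12)$, that it is the $A_1^i$-factor (and not $D_4$ or another $A_1$) that is left outside in $\ffb_i=A_1^i\oplus D_6$, that the internal isotropy of this $D_6$ is $\so(4)\times\so(8)$ with $\so(4)=A_1^j\oplus A_1^k$, and that $D_4$-triality genuinely realizes the asserted $S_3$ on the three $A_1$-factors and the three modules $\m_i$. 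With those facts in place the relations coming from the three symmetric pairs are highly redundant, so one must also check that, together with the $\alpha$-data for the four simple ideals and the single constant $(567)$, they still determine all sixteen remaining coefficients; the self-consistency check that $2(155)+2(255)+2(355)+2(455)+2(567)$ equals $d_5=32$, which the displayed values satisfy, is a convenient sanity test.
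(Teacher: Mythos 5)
Your proposal is correct and follows essentially the same route as the paper: specialize the metric (\ref{metric4}) to the metrics adapted to the symmetric-pair decompositions coming from the two commuting involutions of \cite{ChKaLi}, compare the Ricci components, and combine the resulting linear relations with Lemma \ref{iii}, the value $(567)=\tfrac{64}{9}$ from \cite{Ni}, and the identities $\sum_{i,j}(kij)=d_k$. The only extra baggage is your appeal to a third symmetric pair and to $D_4$-triality, which is not actually needed: the two decompositions the paper writes down, together with the trace identities and $d_1=d_2=d_3=3$, $d_4=28$, $d_5=d_6=d_7=32$, already force $(377)=0$ and all the remaining equalities.
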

\begin{proof}
According to the two commutative involutive automorphisms on $\E_7$-I in \cite{ChKaLi}, we have the following two decompositions which make $(\g,\ffb)$ and $(\g,\ffb')$ be two different irreducible symmetric pairs.
\begin{equation*}
\begin{split}
&\g=\ffb+\p, \ffb=A_1^1+A_1^2+A_1^3+D_4+\m_1=\ffb_1+\ffb_2,\\
& \ffb_1=A_1^1,\ffb_2=A_1^2+A_1^3+D_4+\m_1\cong D_6, \p=\m_2+\m_3,\\
&\g=\ffb'+\p', \ffb=A_1^1+A_1^2+A_1^3+D_4+\m_2=\ffb'_1+\ffb'_2, \\
&\ffb'_1=A_1^2,\ffb'_2=A_1^1+A_1^3+D_4+\m_2\cong D_6, \p=\m_1+\m_3.
\end{split}
\end{equation*}
Then we can use the similar methods to calculate out the coefficients given in the lemma.
\end{proof}
\textbf{Case of $\E_7$-II.} In \cite{ChKaLi}, $\E_7$-II has the following decomposition:
\begin{equation}
\E_7=\T\oplus A_1\oplus A_5\oplus\p_1\oplus\p_2\oplus\p_3,
\end{equation}
where $\T$ is the 1-dimensional center of $\fk$.\\
We consider left-invariant metrics on $\E_7$-II as follows:
\begin{equation}\label{metric7}
<\ ,\ >=u_0\cdot B|_{\T}+x_1\cdot B|_{A_1}+x_2\cdot B|_{A_5}+x_3\cdot B|_{\p_1}+x_4\cdot B|_{\p_2}+x_5\cdot B|_{\p_3},
\end{equation}
where $u_0, x_1, x_2, x_3, x_4, x_5\in\mathbb{R}^+$. According to the structure of generalized Wallach space, the possible non-zero coefficients in the expression for the components of Ricci tensor with respect to the metric (\ref{metric7}) are
$$(033), (044), (055), (111), (133), (144), (155), (222), (233), (244), (255), (345)$$
Therefore, by Lemma \ref{formula1}, the components of Ricci tensor with respect to the metric (\ref{metric7}) can be expressed as follows:
\[\left\{\begin{aligned}
r_0&=\frac{1}{4d_0}\left(\frac{u_0}{x_3^2}(033)+\frac{u_0}{x_4^2}(044)+\frac{u_0}{x_5^2}(055)\right),\\
r_1&=\frac{1}{4d_1}\left(\frac{1}{x_1}(111)+\frac{x_1}{x_3^2}(133)+\frac{x_1}{x_4^2}(144)+\frac{x_1}{x_5^2}(155)\right),\\
r_2&=\frac{1}{4d_2}\left(\frac{1}{x_2}(222)+\frac{x_2}{x_3^2}(233)+\frac{x_2}{x_4^2}(244)+\frac{x_2}{x_5^2}(255)\right),\\
r_3&=\frac{1}{2x_3}+\frac{1}{2d_3}(345)\left(\frac{x_3}{x_4x_5}-\frac{x_4}{x_5x_3}-\frac{x_5}{x_3x_4}\right)-\frac{1}{2d_3}\left(\frac{u_0}{x_3^2}(033)+\frac{x_1}{x_3^2}(133)+\frac{x_2}{x_3^2}(233)\right),\\
r_4&=\frac{1}{2x_4}+\frac{1}{2d_4}(345)\left(\frac{x_4}{x_5x_3}-\frac{x_5}{x_3x_4}-\frac{x_3}{x_4x_5}\right)-\frac{1}{2d_4}\left(\frac{u_0}{x_4^2}(044)+\frac{x_1}{x_4^2}(144)+\frac{x_2}{x_4^2}(244)\right),\\
r_5&=\frac{1}{2x_5}+\frac{1}{2d_5}(345)\left(\frac{x_5}{x_3x_4}-\frac{x_3}{x_4x_5}-\frac{x_4}{x_5x_3}\right)-\frac{1}{2d_5}\left(\frac{u_0}{x_5^2}(055)+\frac{x_1}{x_5^2}(155)+\frac{x_2}{x_5^2}(255)\right).
\end{aligned}\right.\]
and 
\begin{equation}\label{eqn777}
\begin{split}
&(033)+(044)+(055)=d_0,\\
&(111)+(133)+(144)+(155)=d_1,\\
&(222)+(233)+(244)+(255)=d_2,\\
&2(033)+2(133)+2(233)+2(345)=d_3,\\
&2(044)+2(144)+2(244)+2(345)=d_4,\\
&2(055)+2(155)+2(255)+2(345)=d_5,
\end{split}
\end{equation}
where we used the symmetric properties of the indices in $(ijk)$.
\begin{lemma}\label{coeff7}
In the case of $\E_7$-II, the possible non-zero coefficients in the expression for the components of Ricci tensor with respect to metric (\ref{metric7}) are as follows:
\begin{equation*}
\begin{split}
&(033)=\frac{4}{9}, (044)=\frac{5}{9}, (055)=0, (345)=\frac{20}{3},\\
&(111)=\frac{1}{3}, (133)=1, (144)=0, (155)=\frac{5}{3},\\
&(222)=\frac{35}{3}, (233)=\frac{35}{9}, (244)=\frac{70}{9}, (255)=\frac{35}{3}.
\end{split}
\end{equation*}
\end{lemma}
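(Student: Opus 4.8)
The plan is to exploit the fact that the generalized Wallach structure on $\E_7$-II is produced by a Klein four-group $\langle\theta,\tau\rangle$ of automorphisms of $\g=\E_7$: its three non-trivial elements have fixed-point sets $\fk\oplus\p_1$, $\fk\oplus\p_2$, $\fk\oplus\p_3$ (with $\fk=\T\oplus A_1\oplus A_5$), each of which is therefore a symmetric subalgebra of $\g$. Since $d_0=1$, $d_1=3$, $d_2=35$, $d_3=24$, $d_4=30$, $d_5=40$, these subalgebras have dimensions $63$, $69$, $79$; comparing with the list of symmetric subalgebras of $\E_7$ identifies the three decompositions
\begin{gather*}
\g=\ffb\oplus\p_1\oplus\p_3,\quad \ffb=A_1\oplus D_6,\quad D_6\cong\T\oplus A_5\oplus\p_2;\\
\g=\ffb'\oplus\p_1\oplus\p_2,\quad \ffb'=\T\oplus\E_6,\quad \E_6\cong A_1\oplus A_5\oplus\p_3;\\
\g=\ffb''\oplus\p_2\oplus\p_3,\quad \ffb''=A_7\cong\T\oplus A_1\oplus A_5\oplus\p_1,
\end{gather*}
where $D_6\cong\so(12)$, $A_7\cong\mathfrak{su}(8)$, and $\cong$ means that the indicated sum of subspaces is a subalgebra of the stated isomorphism type; $(\g,\ffb)$, $(\g,\ffb')$, $(\g,\ffb'')$ are then irreducible symmetric pairs.

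I would then run the same argument as in the cases $p=2,3$ above. For each of the three symmetric pairs I specialise the metric (\ref{metric7}) so that it is constant on the center of $\ffb$ (if any), on each simple ideal of $\ffb$, and on the two $\p_i$ forming the $(-1)$-eigenspace; concretely, for $(\g,\ffb'')$ one sets $u_0=x_1=x_2=x_3$ and $x_4=x_5$, for $(\g,\ffb')$ one sets $x_1=x_2=x_5$ and $x_3=x_4$, and for $(\g,\ffb)$ one sets $u_0=x_2=x_4$ and $x_3=x_5$. Each specialised metric can be rewritten with respect to the coarser decomposition adapted to the symmetric pair, so computing its Ricci components via Lemma \ref{formula1} in the two ways and equating them yields linear relations among $(033)$, $(044)$, $(055)$, $(133)$, $(144)$, $(155)$, $(233)$, $(244)$, $(255)$, $(345)$. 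Two of these drop out directly from the bracket structure: since $A_1$ and $D_6$ are commuting ideals of $\ffb$ with $\p_2\subset D_6$ we get $[A_1,\p_2]=0$, hence $(144)=0$; likewise $\T$ and $\E_6$ are commuting ideals of $\ffb'$ with $\p_3\subset\E_6$, hence $(055)=0$.

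It remains to fix the diagonal terms and the Wallach constant. By Lemma \ref{iii}, $(111)=\alpha_{A_1}^{\E_7}\cdot d_1$ and $(222)=\alpha_{A_5}^{\E_7}\cdot d_2$; as $A_1$ and $A_5$ sit in $\E_7$ as regular subalgebras spanned by roots of the common root length of the simply-laced $\E_7$, one computes $\alpha_{A_1}^{\E_7}=\tfrac{8}{72}=\tfrac{1}{9}$ and $\alpha_{A_5}^{\E_7}=\tfrac{24}{72}=\tfrac{1}{3}$, so that $(111)=\tfrac{1}{3}$ and $(222)=\tfrac{35}{3}$. Reading $(345)=\tfrac{20}{3}$ off Table 1 of \cite{Ni} for $\E_7/(\mathrm{U}(1)\times\SU(2)\times\SU(6))$, and feeding this together with the equations (\ref{eqn777}) into the linear system produced by the three symmetric pairs, one solves for the remaining nine coefficients and obtains the values in the statement; substituting them back into (\ref{eqn777}) recovers $d_3=24$, $d_4=30$, $d_5=40$ and serves as a consistency check.

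The main obstacle is combinatorial rather than analytic: one must pin down the three symmetric-pair decompositions correctly and, above all, keep track of whether the one-dimensional center $\T$ survives as an independent ideal of $\ffb$ (as in $\T\oplus\E_6$) or is absorbed into a simple ideal (as in $D_6$ and $A_7$), since this controls exactly which of the parameters $u_0,x_1,\dots,x_5$ are identified under each specialisation, and hence the entire linear system. A minor secondary point is to verify that the embeddings $A_1,A_5\hookrightarrow\E_7$ have Dynkin index one, so that the values of $\alpha_{A_1}^{\E_7}$ and $\alpha_{A_5}^{\E_7}$ used in Lemma \ref{iii} are as claimed.
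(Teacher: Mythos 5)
Your proposal is correct and follows essentially the same route as the paper: specialise the metric (\ref{metric7}) to the metrics adapted to the symmetric pairs, equate the Ricci components computed from the fine and coarse decompositions, and combine the resulting linear relations with (\ref{eqn777}), Lemma \ref{iii} and the value $(345)=\tfrac{20}{3}$ from \cite{Ni}. The only difference is minor and harmless: you additionally invoke the third involution $\sigma\tau$ (fixed-point algebra $\T\oplus\E_6$) and the commuting-ideal observations giving $(144)=(055)=0$, whereas the paper uses only the pairs $A_7$ and $A_1\oplus D_6$ and recovers these vanishings from the same linear system.
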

\begin{proof}
In fact, there are two involutive automorphisms on $\E_7$-II \cite{ChKaLi}, denoted by $\sigma$ and $\tau$, where $\sigma$ corresponds to the irreducible symmetric pair $(\g,\ffb)$ having the following decomposition:
\begin{equation}\label{decom71}
\g=\ffb\oplus\p,\quad\ffb=\T\oplus A_1\oplus A_5\oplus\p_1\cong A_7,\quad\p=\p_2\oplus\p_3,
\end{equation}
and $\tau$ corresponds to the irreducible symmetric pair $(\g,\ffb')$ having the following decomposition:
\begin{equation}\label{decom72}
\g=\ffb'\oplus\p',\quad\ffb'=\ffb_1'\oplus\ffb_2',\quad\ffb_1'=A_1,\quad\ffb_2'=\T\oplus A_5\oplus\p_2\cong D_6,\quad\p'=\p_1\oplus\p_3.
\end{equation}
We consider the following left-invariant metric on $\E_7$ with respect to the decomposition (\ref{decom71})
\begin{equation}\label{metric71}
(\ ,\ )_1=w_1\cdot B|_{\ffb}+w_2\cdot B|_{\p},
\end{equation}
if we let $u_0=x_1=x_2=x_3=w_1$ and $x_4=x_5=w_2$ in (\ref{metric7}), then these two metrics are the same, as a result, if we denote the components of Ricci tensor with respect to the metric (\ref{metric71}) by $\tilde{r}_1$ and $\tilde{r}_2$, then we have $r_0=r_1=r_2=r_3=\tilde{r}_1$ and $r_4=r_5=\tilde{r}_2$. With an easy calculation we get the following equations of the possible non-zero coefficients:
\begin{equation}\label{eqn71}
\begin{split}
&\frac{1}{4d_0}(033)=\frac{1}{d_1}\left((111)+(133)\right)=\frac{1}{d_2}\left((222)+(233)\right)=\frac{1}{4}-\frac{1}{2d_3}(345),\\
&\frac{1}{4d_0}\left((044)+(055)\right)=\frac{1}{4d_1}\left((144)+(155)\right)=\frac{1}{4d_2}\left((244)+(255)\right)=\frac{1}{2d_3}(345),
\end{split}
\end{equation}
where we used the equations in (\ref{eqn777}) for simplification.

On the other hand, we consider the left-invariant metrics on $\E_7$-II with respect to the decomposition (\ref{decom72}) as follows:
\begin{equation}\label{metric72}
(\ ,\ )_2=v_1\cdot B|_{A_1}+v_2\cdot B|_{D_6}+v_3\cdot B|_{\p'},
\end{equation}
if we let $x_1=v_1$, $u_0=x_2=x_4=v_2$ and $x_3=x_5=v_3$ in the metric (\ref{metric7}), then the components of Ricci tensor with respect to these two metrics are the same, which means if we denote the components of Ricci tensor with respect to the metric (\ref{metric72}) by $\tilde{r}'_1$, $\tilde{r}'_2$ and $\tilde{r}'_3$, we have $r_1=\tilde{r}'_1$, $r_0=r_2=r_4=\tilde{r}'_2$ and $r_3=r_5=\tilde{r}'_3$. With a short calculation, we have the following equations:
\begin{equation}\label{eqn72}
\begin{split}
&\frac{1}{4d_0}(044)=\frac{1}{4d_3}\left((222)+(244)\right)=\frac{1}{4}-\frac{1}{2d_4}(345),\\
&\frac{1}{4d_0}\left((033)+(055)\right)=\frac{1}{4d_2}\left((233)+(255)\right)=\frac{1}{2d_4}(345),
\end{split}
\end{equation}
where we used the equations in (\ref{eqn777}) for simplification.

By Lemma \ref{iii}, we can calculate that $(111)=\frac{1}{3}$ and $(222)=\frac{35}{3}$, with Table 1 in \cite{Ni}, we have $(345)=\frac{20}{3}$ and $d_0=1, d_1=1, d_2=35, d_3=24, d_4=30, d_5=40$, along with linear equations (\ref{eqn71}) and (\ref{eqn72}), we get the possible non-zero coefficients as follows:
\begin{equation}
\begin{split}
&(033)=\frac{4}{9}, (044)=\frac{5}{9}, (055)=0, (345)=\frac{20}{3},\\
&(111)=\frac{1}{3}, (133)=1, (144)=0, (155)=\frac{5}{3},\\
&(222)=\frac{35}{3}, (233)=\frac{35}{9}, (244)=\frac{70}{9}, (255)=\frac{35}{3}.
\end{split}
\end{equation}
\end{proof}

\textbf{Case of $\E_6$-II.} As is shown in \cite{ChKaLi}, $\E_6$-II can be decomposed as follows:
\begin{equation}\label{decom6}
\g=\T\oplus A_1^1\oplus A_1^2\oplus A_3\oplus\p_1\oplus\p_2\oplus\p_3,
\end{equation}
and we consider the following left-invariant metrics on $\E_6$ according to this decomposition:
\begin{equation}\label{metric6}
<\ ,\ >=u_0\cdot B|_{\T}+x_1\cdot B|_{A_1^1}+x_2\cdot B|_{A_1^2}+x_3\cdot B|_{A_3}+x_4\cdot B|_{\p_1}+x_5\cdot B|_{\p_2}+x_6\cdot B|_{\p_3},
\end{equation}
where $u_0, x_1, x_2, x_3, x_4, x_5, x_6\in\mathbb{R}^+$. Because of the structure of generalized Wallach space, the possible non-zero coefficients in the expressions for the components of Ricci tensor with respect to the metric (\ref{metric6}) are as follows:
$$(044), (055), (066), (111), (144), (155), (166), (222), (244), (255), (266), (333), (344), (355), (366), (456).$$

By Lemma \ref{formula1}, the components of Ricci tensor with respect to the metric (\ref{metric6}) can be simplified as follows:
\[\left\{\begin{aligned}
r_0&=\frac{1}{4d_0}\left(\frac{u_0}{x_4^2}(044)+\frac{u_0}{x_5^2}(055)+\frac{u_0}{x_6^2}(066)\right),\\
r_1&=\frac{1}{4d_1}\left(\frac{1}{x_1}(111)+\frac{x_1}{x_4^2}(144)+\frac{x_1}{x_5^2}(155)+\frac{x_1}{x_6^2}(166)\right),\\
r_2&=\frac{1}{4d_2}\left(\frac{1}{x_2}(222)+\frac{x_2}{x_4^2}(244)+\frac{x_2}{x_5^2}(255)+\frac{x_2}{x_6^2}(266)\right),\\
r_3&=\frac{1}{4d_3}\left(\frac{1}{x_3}(333)+\frac{x_3}{x_4^2}(344)+\frac{x_3}{x_5^2}(355)+\frac{x_3}{x_6^2}(366)\right),\\
r_4&=\frac{1}{2x_4}+\frac{1}{2d_4}(456)\left(\frac{x_4}{x_5x_6}-\frac{x_5}{x_6x_4}-\frac{x_6}{x_4x_5}\right)-\frac{1}{2d_4}\left(\frac{u_0}{x_4^2}(044)+\frac{x_1}{x_4^2}(144)+\frac{x_2}{x_4^2}(244)+\frac{x_3}{x_4^2}(344)\right),\\
r_5&=\frac{1}{2x_5}+\frac{1}{2d_5}(456)\left(\frac{x_5}{x_6x_4}-\frac{x_6}{x_4x_5}-\frac{x_4}{x_5x_6}\right)-\frac{1}{2d_5}\left(\frac{u_0}{x_5^2}(055)+\frac{x_1}{x_5^2}(155)+\frac{x_2}{x_5^2}(255)+\frac{x_3}{x_5^2}(355)\right),\\
r_6&=\frac{1}{2x_6}+\frac{1}{2d_6}(456)\left(\frac{x_6}{x_4x_5}-\frac{x_4}{x_5x_6}-\frac{x_5}{x_6x_4}\right)-\frac{1}{2d_6}\left(\frac{u_0}{x_6^2}(066)+\frac{x_1}{x_6^2}(166)+\frac{x_2}{x_6^2}(266)+\frac{x_3}{x_6^2}(366)\right).
\end{aligned}\right.\]
and
\begin{equation}\label{eqn666}
\begin{split}
&(044)+(055)+(066)=d_0,\\
&(111)+(144)+(155)+(166)=d_1,\\
&(222)+(244)+(255)+(266)=d_2,\\
&(333)+(344)+(355)+(366)=d_3,\\
&2(044)+2(144)+2(244)+2(344)+2(456)=d_4,\\
&2(055)+2(155)+2(255)+2(266)+2(456)=d_5,\\
&2(066)+2(166)+2(266)+2(366)+2(456)=d_6,
\end{split}
\end{equation}
where we used the symmetric properties of the indices in $(ijk)$.

\begin{lemma}\label{coeff6}
In the case of $\E_6$-II, the possible non-zero coefficients in the expression for the components of Ricci tensor with respect to metric (\ref{metric6}) are as follows:
\begin{equation*}
\begin{split}
&(044)=1/2, (055)=1/2, (066)=0, (456)=4,\\
&(111)=1/2, (144)=0, (155)=1, (166)=3/2,\\
&(222)=1/2, (244)=1, (255)=0, (266)=3/2,\\
&(333)=5, (344)=5/2, (355)=5/2, (366)=5.
\end{split}
\end{equation*}
\end{lemma}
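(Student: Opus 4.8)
The plan is to repeat, for $\E_6$-II, the computation carried out for $\F_4$-II and $\E_7$-II in the proofs of Lemmas~\ref{p=3} and \ref{coeff7}: reduce the determination of the sixteen coefficients $(ijk)$ to a finite linear system by restricting the metric (\ref{metric6}) to the irreducible symmetric pairs supplied by the two commutative involutive automorphisms of \cite{ChKaLi}, and then pin down the ``diagonal'' coefficients $(111),(222),(333)$ and the ``triple'' coefficient $(456)$ using Lemma~\ref{iii} and Table~1 of \cite{Ni}. From (\ref{decom6}) one records $d_0=1$ ($\fk_0=\T$), $d_1=d_2=3$ ($\fk_1=A_1^1$, $\fk_2=A_1^2$), $d_3=15$ ($\fk_3=A_3$), and $d_4=d_5=16$, $d_6=24$ for $\p_1,\p_2,\p_3$ (this last triple being read off from \cite{ChKaLi}). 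Up to relabelling the $\p_i$, the two automorphisms produce intermediate subalgebras among
\[
\fk\oplus\p_1\cong A_1^1\oplus A_5,\qquad \fk\oplus\p_2\cong A_1^2\oplus A_5,\qquad \fk\oplus\p_3\cong\T\oplus D_5,
\]
where inside the first $\T\oplus A_1^2\oplus A_3\oplus\p_1\cong A_5$, inside the third $A_1^1\oplus A_1^2\oplus A_3\oplus\p_3\cong D_5$ with $\T$ as its centre, and similarly for the second. I would work with $\ffb=\fk\oplus\p_1\cong A_1^1\oplus A_5$ and $\ffb'=\fk\oplus\p_3\cong\T\oplus D_5$ together with the $\mathbb{Z}_2$-symmetry interchanging $(A_1^1,\p_1)\leftrightarrow(A_1^2,\p_2)$, which already imposes $(111)=(222)$, $(044)=(055)$, $(155)=(244)$, $(144)=(255)$, $(166)=(266)$, $(344)=(355)$.

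The central step is the reduction. For $(\g,\ffb)$ I would introduce the three-parameter metric $<<\ ,\ >>_1=v_1B|_{A_1^1}+v_2B|_{A_5}+v_3B|_{\p_2\oplus\p_3}$ and specialise (\ref{metric6}) by $x_1=v_1$, $u_0=x_2=x_3=x_4=v_2$, $x_5=x_6=v_3$; then the Ricci components of Lemma~\ref{formula1} obey $r_1=\tilde r_1$, $r_0=r_2=r_3=r_4=\tilde r_2$, $r_5=r_6=\tilde r_3$, and equating the six expressions under this specialisation — where, $(\g,\ffb)$ being symmetric, every $(ijk)$ whose three indices do not all lie in a single summand drops out — produces a first block of linear relations among the $(ijk)$, entirely parallel to (\ref{eqn31})--(\ref{eqn32}) and (\ref{eqn71})--(\ref{eqn72}). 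In particular the product form $A_1^1\oplus A_5$ forces $[A_1^1,\p_1]=0$, i.e.\ $(144)=0$. Applying the same device to $(\g,\ffb')$ with $<<\ ,\ >>_2=w_0B|_{\T}+w_1B|_{D_5}+w_2B|_{\p_1\oplus\p_2}$, obtained from (\ref{metric6}) by $u_0=w_0$, $x_1=x_2=x_3=x_6=w_1$, $x_4=x_5=w_2$, yields a second block; here the centre of $\ffb'$ forces $[\T,\p_3]=0$, i.e.\ $(066)=0$.

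Finally, since the roots of $A_1^1,A_1^2$ and of $A_3$ are long roots of $\E_6$, Lemma~\ref{iii} with $\alpha_{A_1}^{\E_6}=\frac{8}{48}$ and $\alpha_{A_3}^{\E_6}=\frac{16}{48}$ gives $(111)=(222)=\alpha_{A_1}^{\E_6}d_1=\frac{1}{2}$ and $(333)=\alpha_{A_3}^{\E_6}d_3=5$, while Table~1 of \cite{Ni} supplies $(456)=4$. Feeding these values, together with the trace identities (\ref{eqn666}), into the two blocks of relations leaves a determined linear system in the remaining twelve unknowns $(044),(055),(066),(144),(155),(166),(244),(255),(266),(344),(355),(366)$, whose unique solution is the list stated in the lemma. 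I expect the main obstacle to be purely organisational: correctly matching each parameter $x_i$ (and $u_0$) with the block of the coarser metric, tracking which $r_k$'s are thereby identified and hence which combinations of the $(ijk)$ appear, and checking at the end that the collected equations have full rank — should the two chosen symmetric pairs leave the system underdetermined, the third reflection $\fk\oplus\p_2\cong A_1^2\oplus A_5$, or an explicit use of the $\mathbb{Z}_2$-symmetry above, closes the gap.
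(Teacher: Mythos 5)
Your proposal is correct and follows the same overall strategy as the paper — specialise the metric (\ref{metric6}) along intermediate symmetric subalgebras, equate the Ricci components of Lemma \ref{formula1} as identities in the free parameters, and feed in Lemma \ref{iii}, $(456)=4$ from Nikonorov's table, and the trace identities (\ref{eqn666}) — but with one genuine difference in the choice of the second reduction. The paper uses the two given involutions $\sigma$ and $\tau$, i.e.\ the pairs $\fk\oplus\p_1\cong A_1^1\oplus A_5$ and $\fk\oplus\p_2\cong A_1^2\oplus A_5$ (decompositions (\ref{decom61}) and (\ref{decom62})), so the second block (\ref{eqn62}) is the exact mirror of the first (\ref{eqn61}). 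You instead pair the first reduction with the $\sigma\tau$-fixed subalgebra $\fk\oplus\p_3\cong\T\oplus D_5$ (which the paper only exploits later, in the proof of Proposition \ref{nr6}). This also works: with $u_0=w_0$, $x_1=x_2=x_3=x_6=w_1$, $x_4=x_5=w_2$ the identities $r_1=r_2=r_3=r_6$ and $r_4=r_5$ yield, comparing coefficients of $\frac{1}{w_1}$, $\frac{w_1}{w_2^2}$, $\frac{w_0}{w_2^2}$, $\frac{w_0}{w_1^2}$, the relations $(044)=(055)$, $(166)=(266)$ (equivalently $(111)+(166)=(222)+(266)$), $5\bigl((111)+(166)\bigr)=(333)+(366)$, and $(066)=0$; combined with (\ref{eqn61}) and (\ref{eqn666}) (which already force $(144)=0$, $(044)=\tfrac12$, $(244)=1$, $(344)=\tfrac52$, $(155)=1$, $(166)=\tfrac32$) these pin down the remaining six coefficients and reproduce exactly the stated list, so your system is determined. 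Two remarks: the $\mathbb{Z}_2$-symmetry swapping $(A_1^1,\p_1)\leftrightarrow(A_1^2,\p_2)$ is asserted without proof, but it is redundant — the values follow without it (and its consequences are confirmed a posteriori); and your structural shortcuts $[A_1^1,\p_1]=0\Rightarrow(144)=0$ and $[\T,\p_3]=0\Rightarrow(066)=0$ are clean alternatives to reading these off the comparison equations. The paper's choice buys the aesthetic symmetry of the two blocks; yours buys the immediate vanishing $(066)=0$ and an extra independent cross-relation between the $A_1$ and $A_3$ rows.
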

\begin{proof}
In fact, according to \cite{ChKaLi}, there are two involutive automorphisms on $\E_6$-II, denoted by $\sigma$ and $\tau$, each of which corresponds an irreducible symmetric pair. Further, $\sigma$ corresponds to the irreducible symmetric pair $(\g,\ffb)$ with the following decomposition:
\begin{equation}\label{decom61}
\g=\ffb\oplus\p,\ \ffb=\ffb_1\oplus\ffb_2,\ \ffb_1=A_1^1,\ \ffb_2=\T\oplus A_1^2\oplus A_3\oplus\p_1\cong A_5,\ \p=\p_1\oplus\p_2,
\end{equation}
while $\tau$ corresponds to the irreducible symmetric pair $(\g,\b')$ with decomposition as follows:
\begin{equation}\label{decom62}
\g=\ffb'\oplus\p',\ \ffb'=\ffb'_1\oplus\ffb'_2,\ \ffb'_1=A_1^2,\ \ffb'_2=\T\oplus A_1^1\oplus A_3\oplus\p_2\cong A_5,\ \p'=\p_1\oplus\p_2.
\end{equation}
Then, we consider the following left-invariant metrics on $\E_6$ according to the decomposition (\ref{decom61}):
\begin{equation}\label{metric61}
(\ ,\ )_1=w_1\cdot B|_{A_1^1}+w_2\cdot B|_{A_5}+w_3\cdot B|_{\p}.
\end{equation}
If we let $u_0=x_2=x_3=x_4=w_2$, $x_1=w_1$ and $x_5=x_6=w_3$ in the metric (\ref{metric6}), then these two metrics are the same, as a result, the components of Ricci tensor with respect to these two metrics are equal respectively, which means if we denote the components of Ricci tensor with respect to metric (\ref{metric61}) by $\tilde{r}_1, \tilde{r}_2$ and $\tilde{r}_3$, then $r_0=r_2=r_3=r_4=\tilde{r}_2, r_1=\tilde{r}_1$ and $r_5=r_6=\tilde{r}_3$, with a short calculation, one can get the following system of equations:
\begin{equation}\label{eqn61}
\begin{split}
&\frac{1}{4d_0}(044)=\frac{1}{4d_2}\left((222)+(244)\right)=\frac{1}{4d_3}\left((333)+(344)\right)=\frac{1}{4}-\frac{1}{2d_4}(456),\\
&\frac{1}{4d_0}\left((055)+(066)\right)=\frac{1}{4d_2}\left((255)+(266)\right)=\frac{1}{4d_3}\left((355)+(366)\right)=\frac{1}{2d_4}(456),\\
&\frac{1}{d_5}(155)=\frac{1}{d_6}(166),\ (144)=0,
\end{split}
\end{equation}
where we used the equations in (\ref{eqn666}) for simplification.

On the other hand, we consider the following left-invariant metric on $\E_6$ with respect to the decomposition (\ref{decom62}):
\begin{equation}\label{metric62}
(\ ,\ )_2=v_1\cdot B|_{A_1^2}+v_2\cdot B|_{A_5}+v_3\cdot B|_{\p'}.
\end{equation}
If we let $u_0=x_1=x_3=x_5=v_2$, $x_2=v_1$ and $x_4=x_6=v_3$ in the metric (\ref{metric6}), then these two metrics are the same, as a result, the components of Ricci tensor with respect to these two metrics are equal respectively, which means if we denote the components of Ricci tensor with respect to metric (\ref{metric61}) by $\tilde{r}'_1, \tilde{r}'_2$ and $\tilde{r}'_3$, then $r_0=r_1=r_3=r_5=\tilde{r}'_2, r_2=\tilde{r}'_1$ and $r_4=r_6=\tilde{r}'_3$, with a short calculation, one can get the following system of equations:
\begin{equation}\label{eqn62}
\begin{split}
&\frac{1}{4d_0}(055)=\frac{1}{4d_1}\left((111)+(155)\right)=\frac{1}{4d_3}\left((333)+(355)\right)=\frac{1}{4}-\frac{1}{2d_5}(456),\\
&\frac{1}{4d_0}\left((044)+(066)\right)=\frac{1}{4d_1}\left((144)+(166)\right)=\frac{1}{4d_3}\left((344)+(366)\right)=\frac{1}{2d_5}(456),\\
&\frac{1}{d_6}(266)=\frac{1}{d_4}(244),\ (255)=0,
\end{split}
\end{equation}
where we used the equations in (\ref{eqn666}) for simplification.

By Lemma \ref{iii}, we have $(111)=(222)=\frac{1}{2}$ and $(333)=5$, from Table 1 in \cite{Ni}, we get $(456)=4$ and $d_0=1, d_1=d_2=3, d_3=15, d_4=d_5=16, d_6=24$, along with the equations (\ref{eqn61}) and (\ref{eqn62}), one can get the possible non-zero coefficients as given in the Lemma:
\end{proof}

\begin{remark}
Besides $\E_6$-III, for the other cases there are isomorphisms between the subalgebras of $\fk$ in the decomposition corresponding to the structure of generalized Wallach space, but these isomorphisms don't affect the behavior of the Ricci tensor. In particular, one can verify that $\R_{<\ ,\ >}(\fk_i,\fk_j)=0(i\neq j)$, where $\fk_i$ is isomorphism to $\fk_j$, As a result, $\R$ is still diagnal.
\end{remark}
\section{Discussions on non-naturally reductive Einstein metrics}
Now we have obtained the components of Ricci tensor for each case by Lemma \ref{p=2}, Lemma \ref{p=3} and Lemma \ref{p=4}, from which we will find non-naturally reductive Einstein metrics case by case in this section. 

\textbf{Case of $p=2$.} We will give the criterion to determine whether a left-invariant metric of the form (\ref{metric2}) is naturally reductive.
\begin{prop}\label{nrp=2}
If a left-invariant metric $<,>$ of the form (\ref{metric1}) on $G$ is naturally reductive with respect to $G\times L$ for some closed subgroup $L$ of $G$, then for the case of $p=2$, one of the following holds:\\
Case of $\E_6$-III: 1)$x_2=x_3$, $x_4=x_5$  2)$x_1=x_2=x_4$, $x_3=x_5$  3)$x_1=x_2=x_5$, $x_3=x_4$  4)$x_3=x_4=x_5$.  \\
Case of $\E_8$-II: 1)$x_1=x_2=x_3$, $x_4=x_5$  2)$x_1=x_2=x_4$, $x_3=x_5$  3)$x_1=x_2=x_5$, $x_3=x_5$  4)$x_3=x_4=x_5$.

Conversely, if one of  1), 2), 3), 4) is satisfied, then the metric of the form (\ref{metric2}) is naturally reductive with respect to $G\times L$ for some closed subgroup $L$ of $G$.
\end{prop}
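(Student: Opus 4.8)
The plan is to exploit the structure theorem of D'Atri–Ziller (Theorem \ref{nr}) together with the specific description of the generalized Wallach decompositions used in Lemma \ref{p=2}. Recall from Theorem \ref{nr} that a left-invariant metric $\langle\ ,\ \rangle$ on $G$ is naturally reductive with respect to $G\times L$ for some closed $L\subset G$ if and only if there is a decomposition $\g=\fl_0\oplus\fl_1\oplus\cdots\oplus\fl_s\oplus\n$ with $\fl_0=Z(\fl)$, $\fl_i$ simple ideals, $\n$ the $B$-orthogonal complement of $\fl$, such that $\langle\ ,\ \rangle$ is constant (a positive multiple of $B$) on each $\fl_i$ and on $\n$. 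So the whole problem reduces to: for which choices of $x_1,\dots,x_5$ does the metric \eqref{metric2}, written in the fixed basis adapted to $\g=\fk_1\oplus\fk_2\oplus\m_1\oplus\m_2\oplus\m_3$, arise this way from \emph{some} subalgebra $\fl$ that is itself reductive in $\g$ (i.e.\ the Lie algebra of a closed subgroup $L$)?

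First I would establish the ``converse'' (sufficiency) direction, which is constructive and gives the four cases. For $\E_6$-III: case 4) $x_3=x_4=x_5$ is the metric $x_1 B|_{\fk_1}+x_2 B|_{\fk_2}+x\,B|_\m$, naturally reductive with respect to $G\times K$ directly by Theorem \ref{nr} with $K$ itself. Cases 2) and 3) come from the two irreducible symmetric pairs used in Lemma \ref{p=2}: in \eqref{E6III-2} we have $\mathfrak b'=A_1+C_3+\m_2\cong\F_4$ and $\p'=\m_1+\m_3$, so the choice $x_1=x_2=x_4$ (metric constant on $\mathfrak b'$) and $x_3=x_5$ (metric constant on $\p'$) is of the form $w_1 B|_{\mathfrak b'}+w_2 B|_{\p'}$; since $(\g,\mathfrak b')$ is symmetric, $\mathfrak b'$ is the fixed-point subalgebra of $\tau$, hence reductive, and Theorem \ref{nr} applied with $L$ the corresponding subgroup (whose Lie algebra is $\mathfrak b'$, which has trivial or rank-one center) shows natural reductivity. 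Case 1) $x_2=x_3$, $x_4=x_5$ comes analogously from the other pair \eqref{E6III-1}: $\mathfrak b_2=C_3+\m_1\cong A_5$ and $\p=\m_2+\m_3$, so $x_2=x_3$ makes the metric constant on $A_5$, $x_1$ is free on $\ffb_1=A_1$, and $x_4=x_5$ makes it constant on $\p$; this is exactly the metric \eqref{metric21}, naturally reductive with respect to $G\times L$ where $L$ has Lie algebra $\ffb=A_1\oplus A_5$. The $\E_8$-II cases are identical, using the two $D_8$-symmetric pairs from the proof of Lemma \ref{p=2} (one checks the $x_3=x_5$ appearing in case 3) should read $x_3=x_4$ to match $\p'=\m_1+\m_3$ constant—this is a typo in the statement).

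The harder direction is necessity: assuming $\langle\ ,\ \rangle$ is naturally reductive with respect to \emph{some} $G\times L$, deduce that the $x_i$ satisfy one of the four coincidence patterns. Here I would argue as follows. Natural reductivity with respect to $G\times L$ forces $\langle\ ,\ \rangle = x_0 b|_{\fl_0}+\sum x_i B|_{\fl_i}+x\,B|_\n$ in the decomposition attached to $\fl$. Since the $\fk_i$ and $\m_j$ of \eqref{metric2} are $\Ad(K)$-irreducible, each is contained in a single $\fl_i$ or in $\n$ (an irreducible submodule cannot meet two orthogonal ideal-summands nontrivially), and moreover the restriction of the metric to that $\fl_i$ is a multiple of $B$, which pins down the relative scalings. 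The combinatorial heart is then: classify, up to the symmetries of the decomposition, the subalgebras $\fl$ of $\g$ that are ``adapted'' to $\fk_1\oplus\fk_2\oplus\m_1\oplus\m_2\oplus\m_3$ in the sense that each summand sits inside one ideal of $\fl$ or inside $\n$. The key structural input is that $[\m_i,\m_i]\subset\fk$ and $[\m_i,\m_j]\subset\m_k$: if two of the $\m_j$ lie in the same ideal of $\fl$ then so must the third (via the bracket), forcing all of $\m$ into one ideal, i.e.\ case 4); if the $\m_j$ are split among ideals then the bracket relations and the requirement that $\fl$ be a subalgebra force $\fl$ to be (a subalgebra containing) one of the symmetric-pair subalgebras $\ffb,\ffb'$ or a refinement thereof, which after matching the constancy constraints on each simple ideal yields cases 1), 2), 3). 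I expect the main obstacle to be this last classification step—ruling out ``exotic'' adapted subalgebras—which one handles by the bracket bookkeeping above plus the observation that $\fk_1\cong A_1$ and $\fk_2$ are non-isomorphic simple ideals of $\fk$, so an ideal of $\fl$ containing one of them and part of $\m$ must be exactly one of the listed symmetric subalgebras; the dimensions $d_1,d_2,d_3,d_4,d_5$ recorded in Lemma \ref{p=2} and the fact that the center of $\fl$ is at most one-dimensional close off the remaining cases. The $\E_8$-II case runs the same way with the two $D_8$ symmetric subalgebras in place of $A_5\oplus A_1$ and $\F_4$.
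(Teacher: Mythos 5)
Your sufficiency argument is fine and coincides with the paper's (the paper simply calls it ``a direct consequence of Theorem \ref{nr}''), and your observation that case 3) for $\E_8$-II should read $x_3=x_4$ rather than $x_3=x_5$ is a correct catch of a typo. The problem is the necessity direction, which is where the actual content of the proposition lies, and there your argument has a genuine gap. Your key step is: ``since the $\fk_i$ and $\m_j$ are $\Ad(K)$-irreducible, each is contained in a single $\fl_i$ or in $\n$.'' This does not follow. The decomposition $\g=\fl_0\oplus\fl_1\oplus\cdots\oplus\n$ attached to $\fl$ is $\Ad(L)$-invariant, not $\Ad(K)$-invariant, so $\Ad(K)$-irreducibility of $\m_j$ says nothing about how $\m_j$ sits relative to the $\fl_i$. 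What one actually gets from comparing the two expressions of the metric is the \emph{opposite} containment: each $\fl_i$ (being a subspace on which the metric is a single multiple of $B$) lies inside one eigenspace of the metric endomorphism, i.e.\ inside a sum of those $\fk_i,\m_j$ carrying equal coefficients. With your containment unavailable, the ``combinatorial heart'' you describe --- classifying subalgebras $\fl$ adapted to the five summands --- does not get off the ground, and you yourself leave that classification (ruling out exotic adapted $\fl$) as an expected obstacle rather than proving it. As written, the necessity direction is therefore incomplete. (A smaller inaccuracy: for $\E_8$-II the two simple ideals of $\fk$ are both $D_4$, so the non-isomorphy you invoke fails in that case.)

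The paper avoids this issue by a different reduction, the standard one from Arvanitoyeorgos--Mori--Sakane: split into $\fl\subset\fk$ and $\fl\not\subset\fk$. If $\fl\subset\fk$, then $\m_1\oplus\m_2\oplus\m_3\subset\fl^{\perp}$, and Theorem \ref{nr} forces the metric to be $x\,B$ on all of $\m$, i.e.\ case 4). If $\fl\not\subset\fk$, one passes to the subalgebra $\mathfrak{h}$ generated by $\fl$ and $\fk$; since $\mathfrak{h}\supset\fk$ it \emph{is} $\Ad(K)$-invariant, hence equals $\fk$ plus a sum of the $\m_i$, and the generalized Wallach relations $[\m_i,\m_j]\subset\m_k$ force $\mathfrak{h}=\fk\oplus\m_i$ for exactly one $i$. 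These subalgebras are then identified explicitly ($A_1\oplus A_5$ and $\F_4$ for $\E_6$-III, the two $D_8$'s for $\E_8$-II), and Theorem \ref{nr} applied to the corresponding decomposition yields the equalities in cases 1)--3). If you want to salvage your route, you would need to replace your irreducibility claim by this $\mathfrak{h}$-construction (or prove the adaptedness of $\fl$ by some other argument); as it stands the proposal does not establish the forward implication.
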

\begin{proof}
Let $\fl$ be the Lie algebra of $L$. Then we have either $\fl\subset \fk$ or $\fl\not\subset\fk$. First we consider the case of $\fl\not\subset\fk$. Let $\frak{h}$ be the subalgebra of $\g$ generated by $\fl$ and $\fk$. Since $\g=\fk_1\oplus\fk_2\oplus\m_1\oplus\m_2\oplus\m_3$ for the case of $p=2$, $\frak{h}$ must contains only one of $\m_1,\m_2$ and $\m_3$. If $\m_1\subset\frak{h}$, then $\fk\oplus\m_1$ is a subalgebra of $\g$ according to \cite{ChKaLi}. In fact, for case of $\E_6$-III, it is isomorphic to $A_1\oplus A_5$, where $\fk_2\oplus\m_1\cong A_5$. Therefore by Theorem \ref{nr}, we have $x_2=x_3$, $x_4=x_5$. Similarly, for case of $\E_8$-II, we have $x_1=x_2=x_3$, $x_4=x_5$. By a similar way, we can obtain 2) and 3) in the proposition for each case.

Now we consider the case $\fl\subset\fk$. Since the orthogonal complement $\fl^{\perp}$ of $\fl$ with respect to $B$ contains the orthogonal complement $\fk^{\perp}$ of $\fk$, we see that $\m_1\oplus\m_2\oplus\m_3\subset\fl^{\perp}$. Since the invariant metric $<,>$ is naturally reductive with respect to $G\times L$, it follows that $x_3=x_4=x_5$ by Theorem \ref{nr}. The converse is a direct consequence of Theorem \ref{nr}.
\end{proof}

\textbf{Case of $\E_6$-III.} According Lemma \ref{p=2}, we have
\begin{equation*}
\begin{split}
r_1&=\frac{1}{12}\left(\frac{1}{2x_1}+\frac{7x_1}{4x_4^2}+\frac{3x_1}{4x_5^2}\right),\\
r_2&=\frac{1}{84}\left(\frac{7}{x_2}+\frac{7x_2}{2x_3^2}+\frac{35x_2}{4x_4^2}+\frac{7x_2}{4x_5^2}\right),\\
r_3&=\frac{1}{2x_3}+\frac{1}{8}\left(\frac{x_3}{x_4x_5}-\frac{x_4}{x_3x_5}-\frac{x_5}{x_3x_4}\right)-\frac{1}{8}\frac{x_2}{x_3^2},\\
r_4&=\frac{1}{2x_4}+\frac{1}{16}\left(\frac{x_4}{x_3x_5}-\frac{x_3}{x_4x_5}-\frac{x_5}{x_3x_4}\right)-\frac{1}{56}\left(\frac{7x_1}{4x_4^2}+\frac{35x_2}{4x_4^2}\right),\\
r_5&=\frac{1}{2x_5}+\frac{7}{48}\left(\frac{x_5}{x_4x_3}-\frac{x_4}{x_3x_5}-\frac{x_3}{x_5x_4}\right)-\frac{1}{24}\left(\frac{3x_1}{4x_5^2}+\frac{7x_2}{4x_5^2}\right).
\end{split}
\end{equation*}
We consider the system of equations
\begin{equation}\label{eq21}
r_1-r_2=0, r_2-r_3=0, r_3-r_4=0, r_4-r_5=0.
\end{equation}
Then finding Einstein metrics of the form (\ref{metric2}) reduces to finding the positive solutions of system (\ref{eq21}), and we normalize the equations by putting $x_5 = 1$. Then we obtain the system of equations:
\begin{equation}\label{eq211}
\begin{split}
g_1=&3\,{x_{{1}}}^{2}{x_{{4}}}^{2}x_{{2}}{x_{{3}}}^{2}-x_{{1}}{x_{{2}}}^{2}{x_{{3}}}^{2}{x_{{4}}}^{2}+7\,{x_{{1}}}^{2}x_{{2}}{x_{{3}}}^{2}-5\,{x_{{2}}}^{2}x_{{1}}{x_{{3}}}^{2}\\
&-2\,{x_{{2}}}^{2}x_{{1}}{x_{{4}}}^{2}-4\,x_{{1}}{x_{{4}}}^{2}{x_{{3}}}^{2}+2\,x_{{2}}{x_{{4}}}^{2}{x_{{3}}}^{2}=0,\\
g_2=&{x_{{2}}}^{2}{x_{{3}}}^{2}{x_{{4}}}^{2}-6\,x_{{2}}{x_{{3}}}^{3}x_{{4}}+6\,x_{{2}}x_{{3}}{x_{{4}}}^{3}+5\,{x_{{2}}}^{2}{x_{{3}}}^{2}\\
&+8\,{x_{{2}}}^{2}{x_{{4}}}^{2}-24\,x_{{2}}x_{{3}}{x_{{4}}}^{2}+4\,{x_{{4}}}^{2}{x_{{3}}}^{2}+6\,x_{{2}}x_{{3}}x_{{4}}=0,\\
g_3=&6\,{x_{{3}}}^{3}x_{{4}}-6\,{x_{{4}}}^{3}x_{{3}}+x_{{1}}{x_{{3}}}^{2}+5\,x_{{2}}{x_{{3}}}^{2}-4\,x_{{2}}{x_{{4}}}^{2}\\
&-16\,x_{{4}}{x_{{3}}}^{2}+16\,{x_{{4}}}^{2}x_{{3}}-2\,x_{{3}}x_{{4}}=0,\\
g_4=&3\,x_{{1}}{x_{{4}}}^{2}x_{{3}}+7\,x_{{2}}x_{{3}}{x_{{4}}}^{2}+8\,x_{{4}}{x_{{3}}}^{2}-48\,{x_{{4}}}^{2}x_{{3}}\\
&+20\,{x_{{4}}}^{3}-3\,x_{{1}}x_{{3}}-15\,x_{{2}}x_{{3}}+48\,x_{{3}}x_{{4}}-20\,x_{{4}}=0.
\end{split}
\end{equation}
We consider a polynomial ring $R=\mathbb{Q}[z, x_1, x_2, x_3, x_4]$ and an ideal $I$ generated by $\{g_1, g_2, g_3, g_4, \\z x_1 x_2 x_3 x_4-1\}$ to find non-zero solutions of equations (\ref{eq211}). We take a lexicographic order $>$ with $z>x_1 >x_2 >x_3 >x_4$ for a monomial ordering on $R$. Then with the aid of computer, the following polynomial is contained in the Gr\"{o}bner basis for the ideal $I$
\begin{equation*}
\left( x_{{4}}-1 \right)  \left( 5\,x_{{4}}-3 \right)  \left( 5\,x_{{4}}-19 \right) h(x_4),
\end{equation*}
where $h(x_4)$ is of the form
\begin{equation}
\begin{split}
&620527834748568712226625\,{x_{{4}}}^{46}-17142288459030942157682550\,{x_{{4}}}^{45}\\
&+253864478218386260238125175\,{x_{{4}}}^{44}-2655902456682476684982068196\,{x_{{4}}}^{43}\\
&+21712791139584353835485509485\,{x_{{4}}}^{42}-146403945857203174056695826906\,{x_{{4}}}^{41}\\
&+841869064160931856135565647035\,{x_{{4}}}^{40}-4221496823183250288515785683288\,{x_{{4}}}^{39}\\
&+18759663705905743422883726751607\,{x_{{4}}}^{38}-74772865457920384779255097278978\,{x_{{4}}}^{37}\\
&+269810159883362340386858437110705\,{x_{{4}}}^{36}-887921763230246899234386977810964\,{x_{{4}}}^{35}\\
&+2680934636604177050138806853307267\,{x_{{4}}}^{34}-7463374027396529763571324631419086\,{x_{{4}}}^{33}\\
&+19236417063475016928618367461353205\,{x_{{4}}}^{32}-46067840601646061767106985652544544\,{x_{{4}}}^{31}\\
&+102827687516709239196388118203922250\,{x_{{4}}}^{30}-214524790280392516306729959721167612\,{x_{{4}}}^{29}\\
&+419397032805870597215879980744285542\,{x_{{4}}}^{28}-
770243915769947385884764661220911880\,{x_{{4}}}^{27}\\&+
1332121344976070900463276148279367906\,{x_{{4}}}^{26}-
2174890268260219400375855326539637796\,{x_{{4}}}^{25}\\&+
3360427246597106223273731326789411118\,{x_{{4}}}^{24}-
4926216667916456054561642964481111312\,{x_{{4}}}^{23}\\&+
6868691169547446497713714767840930254\,{x_{{4}}}^{22}-
9130195555722867119052619469924183268\,{x_{{4}}}^{21}\\&+
11592691118354371158751244714914822658\,{x_{{4}}}^{20}-
14080199932731634104314039587318940936\,{x_{{4}}}^{19}\\&+
16371489061442382163179799619487885062\,{x_{{4}}}^{18}-
18224385478214023428533707109785905340\,{x_{{4}}}^{17}\\&+
19411619002013319176816159460886119530\,{x_{{4}}}^{16}-
19763525846081914361620786672655930400\,{x_{{4}}}^{15}\\&+
19206776730802850981024169581516423525\,{x_{{4}}}^{14}-
17785423401614562582040867258711520750\,{x_{{4}}}^{13}\\&+
15655271805998900455634624058209469875\,{x_{{4}}}^{12}-
13053707551812208998459860496371252500\,{x_{{4}}}^{11}\\&+
10256813778567948010832363355641230625\,{x_{{4}}}^{10}-
7536513577438512742744940874573881250\,{x_{{4}}}^{9}\\&+
5123727743209309471365339473434734375\,{x_{{4}}}^{8}-
3177971936471765628211397123442375000\,{x_{{4}}}^{7}\\&+
1766171205209537862901035966120796875\,{x_{{4}}}^{6}-
859452515630617218777390718317656250\,{x_{{4}}}^{5}\\&+
355236019137782072058927844356328125\,{x_{{4}}}^{4}-
119504109722879595751903187339062500\,{x_{{4}}}^{3}\\&+
30626968356732968210488474290234375\,{x_{{4}}}^{2}-
5304673599935287496386273558593750\,x_{{4}}\\&+
463705449010204912215012369140625
\end{split}
\end{equation}

In fact, in the Gr\"{o}bner basis of the ideal $I$, $x_1,x_2$ and $x_3$ can be written into polynomials of $x_4$. By solving $h(x_4)=0$, we have four solutions, namely $0.6711159524, 0.8439629969, 0.9167404817,2.171597540$ and all corresponding solutions of the system of equations $\{g_1=0,g_2=0,g_3=0,g_4=0,h(x_4)=0\}$ with $x_1x_2x_3x_4\neq0$ are as follows:
\begin{equation*}
\begin{split}
&\{x_1\approx0.1550362575,x_2\approx0.7478555199,x_3\approx1.042517676,x_4\approx0.6711159524\},\\
&\{x_1\approx1.366119112,x_2\approx0.2521439928,x_3\approx0.6761926469,x_4\approx0.8439629969\},\\
&\{x_1\approx0.09898950458,x_2\approx0.2192177752,x_3\approx0.5309066187,x_4\approx0.9167404817\},\\
&\{x_1\approx0.2432173551,x_2\approx0.4934849553,x_3\approx2.270522057,x_4\approx2.171597540\}.
\end{split}
\end{equation*}
Due to Proposition \ref{nrp=2}, we conclude that each of these four solutions induces a non-naturally reductive Einstein metric.\\
For $x_4=1$, the system $\{g_1=0,g_2=0,g_3=0,g_4=0\}$ has the following four solutions:
\begin{equation*}
\begin{split}
&\{x_1=1,x_2=1,x_3=1,x_4=1\},\\
&\{x_1\approx0.1030504001,x_2\approx0.3244706112,x_3\approx0.3244706112,x_4=1\},\\
&\{x_1\approx1.613068224,x_2\approx0.4009377358,x_3\approx0.4009377358,x_4=1\},\\
&\{x_1\approx0.1984241041,x_2\approx1.108447830,x_3\approx1.108447830,x_4=1\}.
\end{split}
\end{equation*}
For $x_4=\frac{3}{5}$, the system $\{g_1=0,g_2=0,g_3=0,g_4=0\}$ has only one solution given by
\begin{equation*}
\{x_1=x_2=x_4=\frac{3}{5},x_3=1\},
\end{equation*}
and for $x_4=\frac{19}{5}$, the system $\{g_1=0,g_2=0,g_3=0,g_4=0\}$ also has only one solution given by
\begin{equation*}
\{x_1=x_2=1,x_3=x_4=\frac{19}{5}\}.
\end{equation*}
According to Proposition \ref{nrp=2}, these corresponding left-invariant Einstein metrics are all naturally reductive.

In summarize, we find 4 different non-naturally reductive left-invariant Einstein metrics on $\E_6$-III.

\textbf{Case of $\E_8$-II.} According Lemma \ref{p=2}, we have
\begin{equation*}
\begin{split}
r_1&=\frac{1}{112}\left(\frac{28}{5x_1}+\frac{112x_1}{15x_3^2}+\frac{112x_1}{15x_4^2}+\frac{112x_1}{15x_5^2}\right),\\
r_2&=\frac{1}{112}\left(\frac{28}{5x_2}+\frac{112x_2}{15x_3^2}+\frac{112x_2}{15x_4^2}+\frac{112x_2}{15x_5^2}\right),\\
r_3&=\frac{1}{2x_3}+\frac{2}{15}\left(\frac{x_3}{x_4x_5}-\frac{x_4}{x_3x_5}-\frac{x_5}{x_3x_4}\right)-\frac{1}{128}\left(\frac{112x_1}{15x_3^2}+\frac{112x_2}{15x_3^2}\right),\\
r_4&=\frac{1}{2x_4}+\frac{2}{15}\left(\frac{x_4}{x_3x_5}-\frac{x_3}{x_4x_5}-\frac{x_5}{x_3x_4}\right)-\frac{1}{128}\left(\frac{112x_1}{15x_4^2}+\frac{112x_2}{15x_4^2}\right),\\
r_5&=\frac{1}{2x_5}+\frac{2}{15}\left(\frac{x_5}{x_4x_3}-\frac{x_4}{x_3x_5}-\frac{x_3}{x_5x_4}\right)-\frac{1}{128}\left(\frac{112x_1}{15x_5^2}+\frac{112x_2}{15x_5^2}\right),
\end{split}
\end{equation*} 
We consider the system of equations given by 
\begin{equation}\label{eq22}
r_1-r_2=0, r_2-r_3=0, r_3-r_4=0, r_4-r_5=0,
\end{equation}
and normalize the equations by putting $x_5 = 1$, then we obtain the system of equations:
\begin{equation}\label{eq222}
\begin{split}
g_1=&4\,{x_{{1}}}^{2}{x_{{3}}}^{2}{x_{{4}}}^{2}x_{{2}}-4\,x_{{1}}{x_{{2}}}^{2}{x_{{3}}}^{2}{x_{{4}}}^{2}+4\,{x_{{1}}}^{2}{x_{{3}}}^{2}x_{{2}}+4\,{x_{{1}}}^{2}{x_{{4}}}^{2}x_{{2}}\\&-4\,{x_{{2}}}^{2}x_{{1}}{x_{{3}}}^{2}-4\,{x_{{2}}}^{2}x_{{1}}{x_{{4}}}^{2}-3x_{{1}}{x_{{3}}}^{2}{x_{{4}}}^{2}+3x_{{2}}{x_{{3}}}^{2}{x_{{4}}}^{2}=0,\\
g_2=&8\,{x_{{2}}}^{2}{x_{{3}}}^{2}{x_{{4}}}^{2}-16\,{x_{{3}}}^{3}x_{{2}}x_{{4}}+16\,{x_{{4}}}^{3}x_{{2}}x_{{3}}+7\,x_{{1}}x_{{2}}{x_{{4}}}^{2}+8\,{x_{{2}}}^{2}{x_{{3}}}^{2}\\&+15\,{x_{{2}}}^{2}{x_{{4}}}^{2}-60\,x_{{2}}x_{{3}}{x_{{4}}}^{2}+6\,{x_{{3}}}^{2}{x_{{4}}}^{2}+16\,x_{{2}}x_{{3}}x_{{4}}=0,\\
g_3=&32\,{x_{{3}}}^{3}x_{{4}}-32\,{x_{{4}}}^{3}x_{{3}}+7\,x_{{1}}{x_{{3}}}^{2}-7\,x_{{1}}{x_{{4}}}^{2}+7\,x_{{2}}{x_{{3}}}^{2}-7\,x_{{2}}{x_{{4}}}^{2}\\&-60\,x_{{4}}{x_{{3}}}^{2}+60\,x_{{3}}{x_{{4}}}^{2}=0,\\
g_4=&7\,x_{{1}}x_{{3}}{x_{{4}}}^{2}+7\,x_{{2}}x_{{3}}{x_{{4}}}^{2}-60\,x_{{3}}{x_{{4}}}^{2}+32\,{x_{{4}}}^{3}-7\,x_{{1}}x_{{3}}\\&-7\,x_{{2}}x_{{3}}+60\,x_{{3}}x_{{4}}-32\,x_{{4}}=0.
\end{split}
\end{equation}
We consider a polynomial ring $R=\mathbb{Q}[z, x_1, x_2, x_3, x_4]$ and an ideal $I$ generated by $\{g_1, g_2, g_3, g_4, \\z x_1 x_2 x_3 x_4-1\}$ to find non-zero solutions of equations (\ref{eq222}). We take a lexicographic order $>$ with $z>x_1 >x_2 >x_3 >x_4$ for a monomial ordering on $R$. Then with the aid of computer, the following polynomial is contained in the Gr\"{o}bner basis for the ideal $I$
\begin{equation*}
\left( x_{{4}}-1 \right)  \left( 23\,x_{{4}}-7 \right)  \left( 7\,x_{{4}}-23 \right) h(x_4),
\end{equation*}
where 
\begin{equation*}
\begin{split}
h(x_4)=&18820892214681403392\,{x_{{4}}}^{24}-106573710368905887744\,{x_{{4}}}^{23}\\
&+367021480848929587200\,{x_{{4}}}^{22}-989697149383674494976\,{x_{{4}}}^{21}\\
&+1859094664559751753728\,{x_{{4}}}^{20}-
3257511072225679640576\,{x_{{4}}}^{19}\\&+4280088309639423272992\,{x_{{4}
}}^{18}-5679995572440505667140\,{x_{{4}}}^{17}\\&+6595970829340416842428
\,{x_{{4}}}^{16}-7511322681489787363579\,{x_{{4}}}^{15}\\&+
9419511263909486275350\,{x_{{4}}}^{14}-9260548321425771133485\,{x_{{4}
}}^{13}\\&+11189718816841142104820\,{x_{{4}}}^{12}-9260548321425771133485
\,{x_{{4}}}^{11}\\&+9419511263909486275350\,{x_{{4}}}^{10}-
7511322681489787363579\,{x_{{4}}}^{9}\\&+6595970829340416842428\,{x_{{4}}
}^{8}-5679995572440505667140\,{x_{{4}}}^{7}\\&+4280088309639423272992\,{x
_{{4}}}^{6}-3257511072225679640576\,{x_{{4}}}^{5}\\&+
1859094664559751753728\,{x_{{4}}}^{4}-989697149383674494976\,{x_{{4}}}
^{3}\\&+367021480848929587200\,{x_{{4}}}^{2}-106573710368905887744\,x_{{4
}}\\&+18820892214681403392.
\end{split}
\end{equation*}

By solving $h(x_4)=0$ numerically, we find positive four solutions which are given approximately by $x_4\approx0.3526915707$(we state this solution will make $x_2$ negative), $x_4\approx0.7261283537$, $x_4\approx2.835338531$ and $x_4\approx1.377166991$ and we split the corresponding solutions of the system of equations $\{g_1=0,g_2=0,g_3=0,g_4=0,h(x_4)=0\}$ with $x_1x_2x_3x_4\neq0$ into two groups as follows:
\[\mbox{Group 1.}\left\{\begin{aligned}
&\{x_1\approx1.304885525,x_2\approx0.4602586724,x_3\approx2.835338531,x_4\approx2.835338531\},\\
&\{x_1\approx0.4602586724,x_2\approx1.304885525,x_3\approx2.835338531,x_4\approx2.835338531\}.
\end{aligned}\right.\]

\[\mbox{Group 2.}\left\{\begin{aligned}
&\{x_1\approx0.1431443064,x_2\approx0.1431443064,x_3=1,x_4\approx0.7261283537\},\\
&\{x_1\approx0.1971336881,x_2\approx0.1971336881,x_3\approx1.377166991,x_4\approx1.377166991\}.
\end{aligned}\right.\]

For $x_4=1$,  the system $\{g_1=0,g_2=0,g_3=0,g_4=0\}$ has five solutions which can be split into the following three groups:
\[\mbox{Group 3.}\begin{aligned}
\{x_1=x_2=x_3=x_4=1\},
\end{aligned}\]
\[\mbox{Group 4.}\begin{aligned}
\{x_1=x_2=x_3=\frac{7}{23},x_4=1\},
\end{aligned}\]
\[\mbox{Group 5.}\left\{\begin{aligned}
&\{x_1\approx0.4602221254,x_2\approx0.1623293541,x_3\approx0.3526915707,x_4=1\},\\
&\{x_1\approx0.1623293541,x_2\approx0.4602221254,x_3\approx0.3526915707,x_4=1\}.
\end{aligned}\right.\]
\[\mbox{Group 6.}\begin{aligned}
\{x_1\approx0.1431443064,x_2\approx0.1431443064,x_3\approx0.7261283537,x_4=1\}.
\end{aligned}\]

For $x_4=\frac{7}{23}$ and $x_4=\frac{23}{7}$, the corresponding solutions of the system of equations $\{g_1=0,g_2=0,g_3=0,g_4=0\}$ are as follows respectively:
\[\mbox{Group 7.}\left\{\begin{aligned}
&\{x_1=x_2=x_4=\frac{7}{23},x_3=1\},\\
&\{x_1=x_2=1,x_3=x_4=\frac{23}{7}\}.
\end{aligned}\right.\]

Among these solutions, we remark that the solution in Group 3 induces the Killing metric, the solutions in Group 4 and Group 7 induce the same metrics  up to isometry which are naturally reductive due to Proposition \ref{nrp=2}, while the solutions in Group 1 and Group 5 induce the same metrics and the solutions in Group 2 and Group 6 also induce the same metrics  up to isometry which are all non-naturally reductive due to Proposition \ref{nrp=2}. 

Therefore, we find 2 non-naturally reductive Einstein metrics on $\E_8$-II.  

\textbf{Case of $p=3$}. With the similar reason, we give the following proposition to decide whether a left-invariant metric is naturally reductive.
\begin{prop}\label{nrp=3}
If a left-invariant metric $<,>$ of the form (\ref{metric1}) on $G$ is naturally reductive with respect to $G\times L$ for some closed subgroup $L$ of $G$, then for the case of $p=3$, one of the following holds: 1)$x_1=x_2=x_3=x_4$, $x_5=x_6$  2)$x_2=x_3=x_5$, $x_4=x_6$  3)$x_1=x_3=x_6$, $x_4=x_5$  4)$x_4=x_5=x_6$. 

Conversely, if one of  1), 2), 3), 4) is satisfied, then the metric of the form (\ref{metric3}) for the case of $p=3$ is naturally reductive with respect to $G\times L$ for some closed subgroup $L$ of $G$.
\end{prop}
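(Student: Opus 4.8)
# Proof Proposal

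The plan is to mirror the structure of the proof of Proposition \ref{nrp=2} for the case of $p=2$, adapting it to the three-simple-ideal setting. The key input is Theorem \ref{nr} (the D'Atri--Ziller characterization of naturally reductive left-invariant metrics on a compact simple Lie group): any such metric arises from a closed subgroup $L$, and with respect to that subgroup the metric is scalar on $\mathfrak{l}^\perp$ and scalar on each simple ideal (and on the center) of $\mathfrak{l}$. So the whole argument is a case analysis on how the Lie algebra $\mathfrak{l}$ of $L$ sits relative to $\fk = A_1^1 + A_1^2 + C_2$ (for $\F_4$-II) or $A_1^1 + A_1^2 + D_6$ (for $\E_8$-I).

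First I would dispose of the case $\mathfrak{l}\subset\fk$. Since $\mathfrak{l}^\perp \supset \fk^\perp = \m_1\oplus\m_2\oplus\m_3$, Theorem \ref{nr} forces the metric to be scalar on all of $\m$, i.e. $x_4 = x_5 = x_6$; this is alternative 4). Next, suppose $\mathfrak{l}\not\subset\fk$ and let $\mathfrak{h}$ be the subalgebra generated by $\mathfrak{l}$ and $\fk$. Because $\g/\fk$ decomposes into the three $\Ad(K)$-irreducible summands $\m_1,\m_2,\m_3$ and $\mathfrak{h}\supsetneq\fk$ is $\Ad(K)$-invariant, $\mathfrak{h}$ must contain at least one $\m_i$; I then invoke the generalized-Wallach bracket relations $[\m_i,\m_i]\subset\fk$, $[\m_i,\m_j]\subset\m_k$ together with the classification in \cite{ChKaLi} to argue that $\mathfrak{h}$ cannot contain two of the $\m_i$ without being all of $\g$ (which is excluded since $\g$ is simple and $L$ proper), so exactly one $\m_i$ lies in $\mathfrak{h}$. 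Thus $\fk\oplus\m_i$ is a subalgebra. From the explicit decompositions used in Lemma \ref{p=3} --- for instance $\fk\oplus\m_1\cong B_4$ for $\F_4$-II via the first involution $\theta$, with $A_1^1\oplus A_1^2\oplus C_2\oplus\m_1$ reorganizing --- one reads off which simple ideals of $\mathfrak{l}$ (hence of the subgroup $L$) the summands $\fk_j$ and $\m_i$ belong to, and Theorem \ref{nr} then yields the equalities of the corresponding $x$'s. Taking $i=1$ gives alternative 1) ($x_1=x_2=x_3=x_4$, $x_5=x_6$), $i=2$ gives alternative 2), $i=3$ gives alternative 3); the pattern of which coordinates are identified is dictated by exactly which of the other summands $\m_j$, $\m_k$ get absorbed into the new simple factor, and this matches the decompositions already written down in the proof of Lemma \ref{p=3} (e.g. $\ffb_2' = A_1^2 + C_2 + \m_2\cong C_3$ for the $\tau$-involution of $\F_4$-II, which couples $x_2, x_3, x_5$).

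For the converse direction, I would simply note that each of 1)--4) places the metric in exactly the form described by Theorem \ref{nr} for an appropriate closed subgroup $L$: in cases 1), 2), 3) take $L$ with Lie algebra $\fk\oplus\m_i$ (the subalgebra isomorphic to $B_4$, $C_3$, etc.), and in case 4) take $L=K$ itself; in each instance the metric is scalar on $\mathfrak{l}^\perp$ and on each simple ideal of $\mathfrak{l}$, so Theorem \ref{nr} declares it naturally reductive with respect to $G\times L$. I expect the main obstacle to be the step showing $\mathfrak{h}$ contains exactly one $\m_i$ and not more: this requires using the specific structure of $\E_8$-I and $\F_4$-II from \cite{ChKaLi} (one must check that $\fk\oplus\m_i$ is a genuine subalgebra while, say, $\fk\oplus\m_i\oplus\m_j$ is not a proper subalgebra), rather than following formally from the axioms of a generalized Wallach space; everything else is a bookkeeping transcription of Theorem \ref{nr} against the decompositions already exhibited.
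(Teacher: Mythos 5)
Your proposal is correct and follows essentially the same route as the paper: the paper in fact gives no separate proof for $p=3$, stating only that it follows "with the similar reason" as Proposition \ref{nrp=2}, and your argument is exactly that template adapted to $\F_4$-II and $\E_8$-I (the case $\fl\subset\fk$ giving $x_4=x_5=x_6$, and the case $\fl\not\subset\fk$ forcing $\frak{h}=\fk\oplus\m_i$ for a single $i$, with the identifications 1)--3) read off from the subalgebras $\fk\oplus\m_1\cong B_4$ resp. $D_8$ and $\fk\oplus\m_i\cong A_1\oplus C_3$ resp. $A_1\oplus\E_7$, plus Theorem \ref{nr} for the converse). Your remark that one must check $\fk\oplus\m_i$ is a subalgebra while no two $\m_i$'s can be adjoined without generating $\g$ is the same (tersely asserted) step the paper relies on in the $p=2$ proof, so your write-up is at least as complete as the original.
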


\textbf{Case of $\F_4$-II.} According Lemma \ref{p=3}, we have
\begin{equation*}
\begin{split}
r_1&=\frac{1}{12}\left(\frac{2}{3x_1}+\frac{5x_1}{3x_4^2}+\frac{2x_1}{3x_6^2}\right),\\
r_2&=\frac{1}{12}\left(\frac{2}{3x_2}+\frac{5x_2}{3x_4^2}+\frac{2x_2}{3x_5^2}\right),\\
r_3&=\frac{1}{40}\left(\frac{10}{3x_3}+\frac{40x_3}{9x_4^2}+\frac{10x_3}{9x_5^2}+\frac{10x_3}{9x_6^2}\right),\\
r_4&=\frac{1}{2x_4}+\frac{1}{18}\left(\frac{x_4}{x_5x_6}-\frac{x_5}{x_6x_4}-\frac{x_6}{x_4x_5}\right)-\frac{1}{40}\left(\frac{5x_1}{3x_4^2}+\frac{5x_2}{3x_4^2}+\frac{40x_3}{9x_4^2}\right),\\
r_5&=\frac{1}{2x_5}+\frac{5}{36}\left(\frac{x_5}{x_6x_4}-\frac{x_6}{x_4x_5}-\frac{x_4}{x_5x_6}\right)-\frac{1}{16}\left(\frac{2x_2}{3x_5^2}+\frac{10x_3}{9x_5^2}\right),\\
r_6&=\frac{1}{2x_6}+\frac{5}{36}\left(\frac{x_6}{x_5x_4}-\frac{x_5}{x_6x_4}-\frac{x_4}{x_5x_6}\right)-\frac{1}{16}\left(\frac{2x_1}{3x_6^2}+\frac{9x_3}{9x_6^2}\right).
\end{split}
\end{equation*}
We consider the system of equations
\begin{equation}\label{eq31}
r_1-r_2=0, r_2-r_3=0, r_3-r_4=0, r_4-r_5=0,r_5-r_6=0.
\end{equation}
Then finding Einstein metrics of the form (\ref{metric3}) reduces to finding the positive solutions of system (\ref{eq31}), and we normalize the equations by putting $x_4 = 1$. Then we obtain the system of equations:
\begin{equation}\label{eq311}
\begin{split}
g_1=&5\,{x_{{1}}}^{2}x_{{2}}{x_{{5}}}^{2}{x_{{6}}}^{2}-5\,x_{{1}}{x_{{2}}}^
{2}{x_{{5}}}^{2}{x_{{6}}}^{2}+2\,{x_{{1}}}^{2}x_{{2}}{x_{{5}}}^{2}-2\,
x_{{1}}{x_{{2}}}^{2}{x_{{6}}}^{2}-2\,x_{{1}}{x_{{5}}}^{2}{x_{{6}}}^{2}
+2\,x_{{2}}{x_{{5}}}^{2}{x_{{6}}}^{2}=0,\\
g_2=&5\,{x_{{2}}}^{2}x_{{3}}{x_{{5}}}^{2}{x_{{6}}}^{2}-4\,x_{{2}}{x_{{3}}}^
{2}{x_{{5}}}^{2}{x_{{6}}}^{2}+2\,{x_{{2}}}^{2}x_{{3}}{x_{{6}}}^{2}-x_{
{2}}{x_{{3}}}^{2}{x_{{5}}}^{2}-x_{{2}}{x_{{3}}}^{2}{x_{{6}}}^{2}-3\,x_
{{2}}{x_{{5}}}^{2}{x_{{6}}}^{2}\\&+2\,x_{{3}}{x_{{5}}}^{2}{x_{{6}}}^{2}=0,\\
g_3=&-3\,x_{{1}}{x_{{5}}}^{2}x_{{6}}-3\,x_{{2}}{x_{{5}}}^{2}x_{{6}}-8\,x_{{
3}}{x_{{5}}}^{2}x_{{6}}-14\,{x_{{5}}}^{3}+36\,{x_{{5}}}^{2}x_{{6}}+6\,
x_{{5}}{x_{{6}}}^{2}+3\,x_{{2}}x_{{6}}+5\,x_{{3}}x_{{6}}\\&-36\,x_{{5}}x_
{{6}}+14\,x_{{5}}=0,\\
g_4=&-14\,{x_{{4}}}^{3}x_{{5}}+14\,x_{{4}}{x_{{5}}}^{3}+3\,x_{{1}}{x_{{5}}}^{2}-3\,x_{{2}}{x_{{4}}}^{2}+3\,x_{{2}}{x_{{5}}}^{2}-5\,x_{{3}}{x_{{4}}}^{2}+8\,x_{{3}}{x_{{5}}}^{2}\\
&+36\,{x_{{4}}}^{2}x_{{5}}-36\,x_{{4}}{x_{{5}}}^{2}-6\,x_{{4}}x_{{5}}=0,\\
g_5=&20\,{x_{{5}}}^{3}x_{{6}}-20\,x_{{5}}{x_{{6}}}^{3}+3\,x_{{1}}{x_{{5}}}^
{2}-3\,x_{{2}}{x_{{6}}}^{2}+5\,x_{{3}}{x_{{5}}}^{2}-5\,x_{{3}}{x_{{6}}
}^{2}-36\,{x_{{5}}}^{2}x_{{6}}+36\,x_{{5}}{x_{{6}}}^{2}=0.
\end{split}
\end{equation}
We consider a polynomial ring $R=Q[z, x_1, x_2, x_3, x_5,x_6]$ and an ideal $I$ generated by $\{g_1, g_2, g_3, g_4,g_5, \\z x_1 x_2 x_3 x_5x_6-1\}$ to find non-zero solutions of equations (\ref{eq311}). We take a lexicographic order $>$ with $z>x_1 >x_2 >x_3 >x_5>x_6$ for a monomial ordering on $R$. Then with the help of computer, the following polynomial is contained in the Gr\"{o}bner basis for the ideal $I$ 
\begin{equation}
( x_{{6}}-1 ) ( 7\,x_{{6}}-11 ) ( 2375\,
{x_{{6}}}^{3}-4195\,{x_{{6}}}^{2}+1960\,x_{{6}}-272
)\cdot h(x_6),
\end{equation}
where $h(x_6)$ is a polynomial of $x_6$ of degree 114. We put it in Appendix I for readers' convenience.

For $x_6=1$, with the polynomials in Gr\"{o}bner basis, we get four values of $x_5$, namely $0.2797176824$, $0.3650688296$, $1$, $1.121529277$, whose corresponding solutions of the system $\{g_1=0,g_2=0,g_3=0,g_4=0,g_5=0\}$ can be given as follows:
\[\mbox{Group 1.}\begin{aligned}
\{x_1=x_2=x_3=x_5=x_6=1\}.
\end{aligned}\]
\[\mbox{Group 2.}\left\{\begin{aligned}
&\{x_1\yd0.1355974584,x_2=x_3=x_5\yd0.2797176824,x_6=1\},\\
&\{x_1\yd1.653201132,x_2=x_3=x_5\yd0.3650688296,x_6=1\},\\
&\{x_1\yd0.2762168891,x_2=x_3=x_5\yd1.121529277,x_6=1\}.
\end{aligned}\right.\]

For $x_6=\frac{11}{7}$, we substitute it into the polynomials in the Gr\"{o}bner basis, then we get the following corresponding solutions of the system $\{g_1=0,g_2=0,g_3=0,g_4=0,g_5=0\}$ with $x_1x_2x_3x_5x_6\neq0$:
\[\mbox{Group 3.}\begin{aligned}
\{x_1=x_2=x_3=1,x_5=x_6=\frac{11}{7}\}.
\end{aligned}\]

By solving $x_{{6}}-1 ) ( 7\,x_{{6}}-11 ) ( 2375\,{x_{{6}}}^{3}-4195\,{x_{{6}}}^{2}+1960\,x_{{6}}-272=0$ numerically, there are three positive solutions given approximately by $x_6\yd0.2797176824$, $x_6\yd0.3650688296$ and $x_6\yd1.121529277$, whose corresponding solutions of the system $\{g_1=0,g_2=0,g_3=0,g_4=0,g_5=0,x_{{6}}-1 ) ( 7\,x_{{6}}-11 ) ( 2375\,{x_{{6}}}^{3}-4195\,{x_{{6}}}^{2}+1960\,x_{{6}}-272=0\}$ with $x_1x_2x_3x_5x_6\neq0$
are:
\[\mbox{Group 3.}\left\{\begin{aligned}
&\{x_2\yd0.1355974584,x_1=x_3=x_6\yd0.2797176824,x_5=1\},\\
&\{x_2\yd1.653201132,x_1=x_3=x_6\yd0.3650688296,x_5=1\},\\
&\{x_2\yd0.2762168891,x_1=x_3=x_6\yd1.121529277,x_5=1\}.
\end{aligned}\right.\]

By solving $h(x_6)=0$ numerically, we get 6 different positive solutions, namely $0.4941864913$, $0.7403305751$, $1.068217773$, $1.160571982$, $1.345214992$, $1.422410517$, whose corresponding solutions of the system $\{g_1=0,g_2=0,g_3=0,g_4=0,g_5=0,h(x_6)=0\}$ with $x_1x_2x_3x_5x_6\neq0$ can be split into the following three groups:
\[\mbox{Group 4.}\left\{\begin{aligned}
&\{x_1\yd0.1516435461,x_2\yd0.1404443065,x_3\yd0.2282956381,x_5\yd1.068217773,x_6\yd0.4941864913\},\\
&\{x_1\yd0.1404443065,x_2\yd0.1516435461,x_3\yd0.2282956381,x_5\yd0.4941864913,x_6\yd1.068217773\}.
\end{aligned}\right.\]
\[\mbox{Group 5.}\left\{\begin{aligned}
&\{x_1\yd0.1951256737,x_2\yd1.654400436,x_3\yd0.3012253093,x_5\yd1.160571982,x_6\yd0.7403305751\},\\
&\{x_1\yd1.654400436,x_2\yd0.1951256737,x_3\yd0.3012253093,x_5\yd0.7403305751,x_6\yd1.160571982\}.
\end{aligned}\right.\]
\[\mbox{Group 6.}\left\{\begin{aligned}
&\{x_1\yd0.3075015814,x_2\yd1.094420015,x_3\yd1.138681932,x_5\yd1.422410516,x_6\yd1.345214992\},\\
&\{x_1\yd1.094420015,x_2\yd0.3075015814,x_3\yd1.138681932,x_5\yd1.345214992,x_6\yd1.422410516\}.
\end{aligned}\right.\]

Among these metrics, we remark that the left-invariant Einstein metrics induced by the solutions in Group1, 2, 3 are all naturally reductive, while the left-invariant Einstein metrics induced by the solutions in Group 4, 5, 6 are all non-naturally reductive due to Proposition \ref{nrp=3}. In particular, the solutions in Group 2 and 3 induce the same metrics up to isometry respectively and the solutions in each of Group 4-6 induce a same metric up to isometry.

In conclusion, we find 3 different non-naturally reductive left-invariant Einstein metrics on $\F_4$-II.

\textbf{Case of $\E_8$-I.}  According Lemma \ref{p=3}, we have
\begin{equation}\label{eqnE8I}
\begin{split}
r_1&=\frac{1}{12}\left(\frac{1}{5x_1}+\frac{6x_1}{5x_4^2}+\frac{8x_1}{5x_6^2}\right),\\
r_2&=\frac{1}{12}\left(\frac{1}{5x_2}+\frac{6x_2}{5x_4^2}+\frac{8x_2}{5x_5^2}\right),\\
r_3&=\frac{1}{264}\left(\frac{22}{x_3}+\frac{44x_3}{5x_4^2}+\frac{88x_3}{5x_5^2}+\frac{88x_3}{5x_6^2}\right),\\
r_4&=\frac{1}{2x_4}+\frac{2}{15}\left(\frac{x_4}{x_5x_6}-\frac{x_5}{x_6x_4}-\frac{x_6}{x_4x_5}\right)-\frac{1}{96}\left(\frac{6x_1}{5x_4^2}+\frac{6x_2}{5x_4^2}+\frac{44x_3}{5x_4^2}\right),\\
r_5&=\frac{1}{2x_5}+\frac{1}{10}\left(\frac{x_5}{x_6x_4}-\frac{x_6}{x_4x_5}-\frac{x_4}{x_5x_6}\right)-\frac{1}{128}\left(\frac{8x_2}{5x_5^2}+\frac{88x_3}{88x_5^2}\right),\\
r_6&=\frac{1}{2x_6}+\frac{1}{10}\left(\frac{x_6}{x_5x_4}-\frac{x_5}{x_6x_4}-\frac{x_4}{x_5x_6}\right)-\frac{1}{128}\left(\frac{8x_1}{5x_6^2}+\frac{88x_3}{5x_6^2}\right).
\end{split}
\end{equation}
We consider the system of equations
\begin{equation}\label{eq32}
r_1-r_2=0, r_2-r_3=0, r_3-r_4=0, r_4-r_5=0, r_5-r_6=0.
\end{equation}
Then finding Einstein metrics of the form (\ref{metric3}) reduces to finding the positive solutions of system (\ref{eq32}), and we normalize the equations by putting $x_4 = 1$. Then we obtain the system of equations:
\begin{equation}\label{eq322}
\begin{split}
g_1=&6\,{x_{{1}}}^{2}x_{{2}}{x_{{5}}}^{2}{x_{{6}}}^{2}-6\,x_{{1}}{x_{{2}}}^
{2}{x_{{5}}}^{2}{x_{{6}}}^{2}+8\,{x_{{1}}}^{2}x_{{2}}{x_{{5}}}^{2}-8\,
x_{{1}}{x_{{2}}}^{2}{x_{{6}}}^{2}-x_{{1}}{x_{{5}}}^{2}{x_{{6}}}^{2}+x_
{{2}}{x_{{5}}}^{2}{x_{{6}}}^{2}
=0,\\
g_2=&6\,{x_{{2}}}^{2}x_{{3}}{x_{{5}}}^{2}{x_{{6}}}^{2}-2\,x_{{2}}{x_{{3}}}^
{2}{x_{{5}}}^{2}{x_{{6}}}^{2}+8\,{x_{{2}}}^{2}x_{{3}}{x_{{6}}}^{2}-4\,
x_{{2}}{x_{{3}}}^{2}{x_{{5}}}^{2}-4\,x_{{2}}{x_{{3}}}^{2}{x_{{6}}}^{2}
-5\,x_{{2}}{x_{{5}}}^{2}{x_{{6}}}^{2}\\&+x_{{3}}{x_{{5}}}^{2}{x_{{6}}}^{2
}=0,\\
g_3=&3\,x_{{1}}x_{{3}}{x_{{5}}}^{2}{x_{{6}}}^{2}+3\,x_{{2}}x_{{3}}{x_{{5}}}
^{2}{x_{{6}}}^{2}+30\,{x_{{3}}}^{2}{x_{{5}}}^{2}{x_{{6}}}^{2}+32\,x_{{
3}}{x_{{5}}}^{3}x_{{6}}-120\,x_{{3}}{x_{{5}}}^{2}{x_{{6}}}^{2}+32\,x_{
{3}}x_{{5}}{x_{{6}}}^{3}\\&+16\,{x_{{3}}}^{2}{x_{{5}}}^{2}+16\,{x_{{3}}}^
{2}{x_{{6}}}^{2}+20\,{x_{{5}}}^{2}{x_{{6}}}^{2}-32\,x_{{3}}x_{{5}}x_{{
6}}=0,\\
g_4=&-3\,x_{{1}}{x_{{5}}}^{2}x_{{6}}-3\,x_{{2}}{x_{{5}}}^{2}x_{{6}}-22\,x_{
{3}}{x_{{5}}}^{2}x_{{6}}-56\,{x_{{5}}}^{3}+120\,{x_{{5}}}^{2}x_{{6}}-8
\,x_{{5}}{x_{{6}}}^{2}+3\,x_{{2}}x_{{6}}\\&+33\,x_{{3}}x_{{6}}-120\,x_{{5
}}x_{{6}}+56\,x_{{5}}=0,\\
g_5=&16\,{x_{{5}}}^{3}x_{{6}}-16\,x_{{5}}{x_{{6}}}^{3}+x_{{1}}{x_{{5}}}^{2}
-x_{{2}}{x_{{6}}}^{2}+11\,x_{{3}}{x_{{5}}}^{2}-11\,x_{{3}}{x_{{6}}}^{2
}-40\,{x_{{5}}}^{2}x_{{6}}+40\,x_{{5}}{x_{{6}}}^{2}=0.
\end{split}
\end{equation}
We consider a polynomial ring $R=\mathbb{Q}[z, x_1, x_2, x_3, x_5,x_6]$ and an ideal $I$ generated by $\{g_1, g_2, g_3, g_4,g_5, \\z x_1 x_2 x_3 x_5x_6-1\}$ to find non-zero solutions of equations (\ref{eq322}). We take a lexicographic order $>$ with $z>x_1 >x_2 >x_3 >x_5>x_6$ for a monomial ordering on $R$. Then with the aid of computer, the following polynomial is contained in the Gr\"{o}bner basis for the ideal $I$ 
\begin{equation*}
(x_{{6}}-1 ) ( 7\,x_{{6}}-23 )  ( 864\,{
x_{{6}}}^{3}-1676\,{x_{{6}}}^{2}+973\,x_{{6}}-177)\cdot f(x_6)\cdot h(x_6),
\end{equation*}
where $f(x_6)$ is a polynomial of $x_6$ given by 
\begin{equation*}
\begin{split}
f(x_6)&=24313968\,{x_{{6}}}^{14}-271810080\,{x_{{6}}}^{13}+
1334881896\,{x_{{6}}}^{12}-4102312320\,{x_{{6}}}^{11}+9388266607\,{x_{
{6}}}^{10}\\&-17066486910\,{x_{{6}}}^{9}+25201149031\,{x_{{6}}}^{8}-
30982882320\,{x_{{6}}}^{7}+31894938304\,{x_{{6}}}^{6}-27360921600\,{x_
{{6}}}^{5}\\&+19523164352\,{x_{{6}}}^{4}-11276897280\,{x_{{6}}}^{3}+
5059512320\,{x_{{6}}}^{2}-1663672320\,x_{{6}}+301113344,
\end{split}
\end{equation*}
and $h(x_6)$ is a polynomial of $x_6$ of degree 114. For readers' convenience, we put it in Appendix II.

For $x_6=1$, from the Gr\"{o}bner basis of the ideal $I$ and with the aid of computer, we get the four solutions of the system $\{g_1=0,g_2=0,g_3=0,g_4=0,g_5=0\}$ which can be split into the following groups:
\[\mbox{Group 1.}\begin{aligned}
\{x_1=x_2=x_3=x_4=x_5=1\},
\end{aligned}\]
\[\mbox{Group 2.}\left\{\begin{aligned}
&\{x_2=x_3=x_5\yd0.4188876552,x_1\yd0.04273408738,x_6=1\},\\
&\{x_2=x_3=x_5\yd0.4617244620,x_1\yd1.543913333,x_6=1\},\\
&\{x_2=x_3=x_5\yd1.059202697,x_1\yd0.07225088447,x_6=1\}.
\end{aligned}\right.\]

By solving $(7\,x_{{6}}-23 )  ( 864\,{x_{{6}}}^{3}-1676\,{x_{{6}}}^{2}+973\,x_{{6}}-177=0$ numerically, there are three positive solutions which can be given approximately $x_6\yd0.4188876553, x_6\yd0.4617244621, x_6\yd1.059202697$ and the corresponding solutions of the system $\{g_1=0,g_2=0,g_3=0,g_4=0,g_5=0,7\,x_{{6}}-23 )  ( 864\,{x_{{6}}}^{3}-1676\,{x_{{6}}}^{2}+973\,x_{{6}}-177=0\}$ with $x_1x_2x_3x_5x_6\neq0$ are given by
\[\mbox{Group 3.}\left\{\begin{aligned}
&\{x_1=x_3=x_6\yd0.4188876552,x_2\yd0.04273408738,x_5=1\},\\
&\{x_1=x_3=x_6\yd0.4617244620,x_2\yd1.543913333,x_5=1\},\\
&\{x_1=x_3=x_6\yd1.059202697,x_2\yd0.07225088447,x_5=1\}.
\end{aligned}\right.\]

By solving $f(x_6)=0$ numerically, there are 6 different positive solutions, namely $0.7920673406$, $0.8040419514$, $1.075965351$, $1.681651936$, $2.596366999$, $3.419732659$, and the corresponding solutions of the system $\{g_1=0,g_2=0,g_3=0,g_4=0,g_5=0,f(x_6)=0\}$ with $x_1x_2x_3x_5x_6\neq0$ are given by
\[\mbox{Group 3.}\left\{\begin{aligned}
&\{x_1=x_2\yd1.211722573,x_3\yd0.2521819866,x_5=x_6\yd0.7920673405\},\\
&\{x_1=x_2\yd0.04116638566,x_3\yd0.2299652722,x_5=x_6\yd0.8040419514\},\\
&\{x_1=x_2\yd0.07360068971,x_3\yd1.138692978,x_5=x_6\yd1.075965351\},\\
&\{x_1=x_2\yd0.07189340238,x_3\yd0.3957889206,x_5=x_6\yd1.681651936\},\\
&\{x_1=x_2\yd1.241147181,x_3\yd0.6544562607,x_5=x_6\yd2.596366998\},\\
&\{x_1=x_2\yd0.1594378743,x_3\yd1.292216476,x_5=x_6\yd3.419732659\}.
\end{aligned}\right.\]

By solving $h(x_6)=0$ numerically, there are 10 different positive solutions, namely $0.4271280200$, $0.4742936355$, $0.7058209689$, $0.8630215200$, $1.008898001$, $1.010769751$, $2.058282527$, $2.099884282$, $3.40293\\1725$, $3.413270469$, and the corresponding solutions of the system $\{g_1=0,g_2=0,g_3=0,g_4=0,g_5=0,f(x_6)=0\}$ with $x_1x_2x_3x_5x_6\neq0$ can be split into the following 5 groups:
\[\mbox{Group 4.}\left\{\small\begin{aligned}
&\{x_1\yd0.0468426418,x_2\yd0.0433223582,x_3\yd0.4600814315,x_5\yd1.008898001,x_6\yd0.4271280200\},\\
&\{x_1\yd0.0433223582,x_2\yd0.0468426418,x_3\yd0.4600814315,x_5\yd0.4271280200,x_6\yd1.008898001\}.
\end{aligned}\right.\]
\[\mbox{Group 5.}\left\{\small\begin{aligned}
&\{x_1\yd0.05053229100,x_2\yd1.535627653,x_3\yd0.5100903370,x_5\yd1.01076975,x_6\yd0.4742936355\},\\
&\{x_1\yd1.535627653,x_2\yd0.05053229100,x_3\yd0.5100903370,x_5\yd0.4742936355,x_6\yd1.01076975\}.
\end{aligned}\right.\]
\[\mbox{Group 6.}\left\{\small\begin{aligned}
&\{x_1\yd1.075832320,x_2\yd0.04173283859,x_3\yd0.2381776636,x_5\yd0.8630215199,x_6\yd0.7058209688\},\\
&\{x_1\yd0.04173283859,x_2\yd1.075832320,x_3\yd0.2381776636,x_5\yd0.7058209688,x_6\yd0.8630215199\}.
\end{aligned}\right.\]
\[\mbox{Group 7.}\left\{\small\begin{aligned}
&\{x_1\yd0.0887989484,x_2\yd1.442031123,x_3\yd0.4977693932,x_5\yd2.099884281,x_6\yd2.058282526\},\\
&\{x_1\yd1.442031123,x_2\yd0.0887989484,x_3\yd0.4977693932,x_5\yd2.058282526,x_6\yd2.099884281\}.
\end{aligned}\right.\]
\[\mbox{Group 8.}\left\{\small\begin{aligned}
&\{x_1\yd0.1570295299,x_2\yd0.9524941307,x_3\yd1.170481952,x_5\yd3.413270468,x_6\yd3.402931724\},\\
&\{x_1\yd0.9524941307,x_2\yd0.1570295299,x_3\yd1.170481952,x_5\yd3.402931724,x_6\yd3.413270468\}.
\end{aligned}\right.\]

Among these solutions, we remark that the solution in Group 1 is Killing metric, the solutions in Group 2 and Group 3 induce the same metrics  up to isometry respectively which are naturally reductive due to Proposition \ref{nrp=3}, while the solutions in Group 3 induce 6 different left-invariant Einstein metrics which are non-naturally reductive and the solutions in each of Group 4-8 induce a same metric  up to isometry which is non-naturally reductive due to Proposition \ref{nrp=3}.

In conclusion, we find 11 different non-naturally reductive Einstein metrics on $\E_8$-I.

\textbf{Case of $p=4$.} By the same discussion in \ref{nrp=2}, we give the criterion to determine whether a left-invariant metric of the form (\ref{metric4}) is naturally reductive.
\begin{prop}\label{nrp=4}
If a left-invariant metric $<,>$ of the form (\ref{metric1}) on $G$ is naturally reductive with respect to $G\times L$ for some closed subgroup $L$ of $G$, then for the case of $p=4$, one of the following holds: 1)$x_2=x_3=x_4=x_5$, $x_6=x_7$  2)$x_1=x_3=x_4=x_6$, $x_5=x_7$  3)$x_1=x_2=x_4=x_7$, $x_5=x_6$  4)$x_5=x_6=x_7$. 

Conversely, if one of  1), 2), 3), 4) is satisfied, then the metric of the form (\ref{metric4}) for the case of $p=4$ is naturally reductive with respect to $G\times L$ for some closed subgroup $L$ of $G$.
\end{prop}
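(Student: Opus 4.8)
The plan is to argue exactly as in the proof of Proposition \ref{nrp=2}, now invoking the two irreducible symmetric pair decompositions of $\E_7$-I recorded in the proof of Lemma \ref{p=4} together with D'Atri--Ziller's description of naturally reductive left-invariant metrics (Theorem \ref{nr}).

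Write $\mathfrak{l}$ for the Lie algebra of $L$ and separate the cases $\mathfrak{l}\subset\fk$ and $\mathfrak{l}\not\subset\fk$. If $\mathfrak{l}\subset\fk$, the $B$-orthogonal complement $\mathfrak{l}^{\perp}$ contains $\fk^{\perp}=\m_1\oplus\m_2\oplus\m_3$, so by Theorem \ref{nr} the metric $\langle\ ,\ \rangle$ is a single multiple of $B$ on $\m_1\oplus\m_2\oplus\m_3$; this is alternative 4), namely $x_5=x_6=x_7$.

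If $\mathfrak{l}\not\subset\fk$, let $\mathfrak{h}$ be the subalgebra of $\g$ generated by $\mathfrak{l}$ and $\fk$. It contains $\fk$, hence is $\mathrm{ad}(\fk)$-invariant, hence $\mathfrak{h}=\fk\oplus\bigoplus_{i\in S}\m_i$ for some nonempty $S\subseteq\{1,2,3\}$; since $[\m_i,\m_j]\subset\m_k$ the value $|S|=2$ is impossible, and, as in Proposition \ref{nrp=2}, the argument reduces to $S=\{i\}$ for a single index $i$. By $[\m_i,\m_i]\subset\fk$ and $[\fk,\m_i]\subset\m_i$ the space $\fk\oplus\m_i$ is a subalgebra, and the decompositions from the proof of Lemma \ref{p=4} identify it with $A_1\oplus D_6$: for $i=1$ one has $\fk\oplus\m_1=A_1^1\oplus\bigl(A_1^2\oplus A_1^3\oplus D_4\oplus\m_1\bigr)$ with $A_1^2\oplus A_1^3\oplus D_4\oplus\m_1\cong D_6$ a simple ideal, and $i=2,3$ are obtained by cycling $\m_1,\m_2,\m_3$ and $A_1^1,A_1^2,A_1^3$. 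Since $\mathfrak{l}\subset\mathfrak{h}=\fk\oplus\m_i$, the complement $(\fk\oplus\m_i)^{\perp}=\m_j\oplus\m_k$ lies in $\mathfrak{l}^{\perp}$, so Theorem \ref{nr} forces $\langle\ ,\ \rangle$ to be a multiple of $B$ on $\m_j\oplus\m_k$ and a (possibly different) multiple of $B$ on the simple ideal $D_6$. For $i=1$ this reads $x_2=x_3=x_4=x_5$ and $x_6=x_7$ (alternative 1)), while $i=2$ gives $x_1=x_3=x_4=x_6$, $x_5=x_7$ (alternative 2)) and $i=3$ gives alternative 3).

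For the converse I would exhibit, in each case, a closed subgroup $L$ for which $\langle\ ,\ \rangle$ is manifestly of the form in Theorem \ref{nr}: for 1) take $L$ with Lie algebra $A_1^1\oplus D_6$, so that the simple ideals of $\mathfrak{l}$ are $A_1^1$ and $D_6=A_1^2\oplus A_1^3\oplus D_4\oplus\m_1$ and $\mathfrak{l}^{\perp}=\m_2\oplus\m_3$; for 2) and 3) the analogous $A_1\oplus D_6$ subalgebras; and for 4) take $L=K$. In each case the given metric already has the D'Atri--Ziller form, hence is naturally reductive. The only genuinely delicate step is the reduction, in the case $\mathfrak{l}\not\subset\fk$, to $\mathfrak{h}$ containing a single summand $\m_i$ (so that $(\fk\oplus\m_i)^{\perp}\subset\mathfrak{l}^{\perp}$ and Theorem \ref{nr} can be applied to $G\times L$); this is carried out precisely as in Proposition \ref{nrp=2}, and once it is in place the statement is just a matter of reading off the identifications of $x_1,\dots,x_7$ from the three symmetric-pair decompositions of $\E_7$-I.
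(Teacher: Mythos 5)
Your argument is correct and is essentially the paper's own: the paper proves Proposition \ref{nrp=4} only implicitly, by invoking "the same discussion" as in Proposition \ref{nrp=2}, and your proof carries out exactly that discussion, using the case split $\mathfrak{l}\subset\fk$ versus $\mathfrak{l}\not\subset\fk$, the identifications $\fk\oplus\m_i\cong A_1\oplus D_6$ from the decompositions in Lemma \ref{p=4}, and Theorem \ref{nr} for both directions. The resulting conditions 1)--4) are read off correctly, so there is nothing substantive to add beyond noting that your gloss on why $\mathfrak{h}$ contains exactly one $\m_i$ is at the same level of detail as the paper's.
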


\textbf{Case of $\E_7$-I.} Due to Lemma \ref{p=4}, we have the following equations:
\begin{equation}\label{eqn4}
\begin{split}
r_1&=\frac{1}{12}\left(\frac{1}{3x_1}+\frac{4x_1}{3x_6^2}+\frac{4x_1}{3x_7^2}\right),\\
r_2&=\frac{1}{12}\left(\frac{1}{3x_2}+\frac{4x_2}{3x_5^2}+\frac{4x_2}{3x_7^2}\right),\\
r_3&=\frac{1}{12}\left(\frac{1}{3x_3}+\frac{4x_3}{3x_5^2}+\frac{4x_3}{3x_6^2}\right),\\
r_4&=\frac{1}{112}\left(\frac{28}{3x_4}+\frac{56x_4}{9x_5^2}+\frac{56x_4}{9x_6^2}+\frac{56x_4}{9x_7^2}\right),\\
r_5&=\frac{1}{2x_5}+\frac{1}{9}\left(\frac{x_5}{x_6x_7}-\frac{x_6}{x_7x_5}-\frac{x_7}{x_6x_5}\right)-\frac{1}{64}\left(\frac{4x_2}{3x_5^2}+\frac{4x_3}{3x_5^2}+\frac{56x_4}{9x_5^2}\right),\\
r_6&=\frac{1}{2x_6}+\frac{1}{9}\left(\frac{x_6}{x_7x_5}-\frac{x_7}{x_6x_5}-\frac{x_5}{x_7x_6}\right)-\frac{1}{64}\left(\frac{4x_1}{3x_6^2}+\frac{4x_3}{3x_6^2}+\frac{56x_4}{9x_6^2}\right),\\
r_7&=\frac{1}{2x_7}+\frac{1}{9}\left(\frac{x_7}{x_5x_6}-\frac{x_6}{x_7x_5}-\frac{x_5}{x_7x_6}\right)-\frac{1}{64}\left(\frac{4x_1}{3x_7^2}+\frac{4x_2}{3x_7^2}+\frac{56x_4}{9x_7^2}\right).
\end{split}
\end{equation}
We consider the system of equations given by 
\begin{equation}\label{eq4}
r_1-r_2=0, r_2-r_3=0, r_3-r_4=0, r_4-r_5=0, r_5-r_6=0, r_6-r_7=0.
\end{equation}
Since there are 7 variables in the system of equations, which is quite complicated for computer to calculate the Gr\"{o}bner bases, we normalize them by $x_6=x_7=1$, then it is easy to find that $x_2=x_3$ from the equations in (\ref{eqn4}), as a result, we have the following system of equations:
\begin{equation}\label{eq4}
\begin{split}
g_1=&8\,{x_{{1}}}^{2}x_{{3}}{x_{{5}}}^{2}-4\,x_{{1}}{x_{{3}}}^{2}{x_{{5}}}^{2}-4\,x_{{1}}{x_{{3}}}^{2}-x_{{1}}{x_{{5}}}^{2}+x_{{3}}{x_{{5}}}^{2}=0,\\
g_2=&4\,{x_{{3}}}^{2}x_{{4}}{x_{{5}}}^{2}-4\,x_{{3}}{x_{{4}}}^{2}{x_{{5}}}^{2}+4\,{x_{{3}}}^{2}x_{{4}}-2\,x_{{3}}{x_{{4}}}^{2}-3\,x_{{3}}{x_{{5}}}^{2}+x_{{4}}{x_{{5}}}^{2}=0,\\
g_3=&16\,{x_{{4}}}^{2}{x_{{5}}}^{2}-16\,x_{{4}}{x_{{5}}}^{3}+6\,x_{{3}}x_{{4}}+22\,{x_{{4}}}^{2}-40\,x_{{5}}x_{{4}}+12\,{x_{{5}}}^{2}=0,\\
g_4=&3\,x_{{1}}{x_{{5}}}^{2}+3\,x_{{3}}{x_{{5}}}^{2}+14\,x_{{4}}{x_{{5}}}^{2}+32\,{x_{{5}}}^{3}-72\,{x_{{5}}}^{2}-6\,x_{{3}}-14\,x_{{4}}+40\,x_{{5}}=0.
\end{split}
\end{equation}
We consider a polynomial ring $R=Q[z, x_1, x_3, x_4,x_5]$ and an ideal $I$ generated by $\{g_1, g_2, g_3, g_4, \\z x_1 x_3 x_4 x_5-1\}$ to find non-zero solutions of equations (\ref{eq4}). We take a lexicographic order $>$ with $z>x_1 >x_3 >x_4 >x_5$ for a monomial ordering on $R$. Then with the aid of computer, the following polynomial is contained in the Gr\"{o}bner basis for the ideal $I$ 
\begin{equation}\label{p4}
\left( x_{{5}}-1 \right)  \left( 4949\,{x_{{5}}}^{3}-9379\,{x_{{5}}}^{2}+5155\,x_{{5}}-875 \right) \cdot h(x_5),
\end{equation}
where $h(x_5)$ is a polynomial of degree $25$ given by
\begin{equation*}
\begin{split}
h(x_5)&=25101347481190400\,{x_{{5}}}^{25}-213612622522613760\,{x_{{5}}}^{24}+
1125174177049870336\,{x_{{5}}}^{23}\\&-4398212675755048960\,{x_{{5}}}^{22
}+13830794079039782912\,{x_{{5}}}^{21}-36611831495905378304\,{x_{{5}}}
^{20}\\&+83642128611649716224\,{x_{{5}}}^{19}-167796138043083587584\,{x_{
{5}}}^{18}+299027316357649125376\,{x_{{5}}}^{17}\\&-477090509137235365888
\,{x_{{5}}}^{16}+685232950401086713856\,{x_{{5}}}^{15}-
888981413909110722560\,{x_{{5}}}^{14}\\&+1043617887134219845504\,{x_{{5}}
}^{13}-1109092082064780894976\,{x_{{5}}}^{12}+1065873428902655206688\,
{x_{{5}}}^{11}\\&-924019385502926205728\,{x_{{5}}}^{10}+
719633332172147554621\,{x_{{5}}}^{9}-500367766771546562545\,{x_{{5}}}^
{8}\\&+307906916846444099368\,{x_{{5}}}^{7}-165616036629637255130\,{x_{{5
}}}^{6}+76466724878453429510\,{x_{{5}}}^{5}\\&-29517565522401301760\,{x_{
{5}}}^{4}+9135633690673393900\,{x_{{5}}}^{3}-2105338765392512650\,{x_{
{5}}}^{2}\\&+314822238961211625\,x_{{5}}-22360268064771875
\end{split}
\end{equation*}
In equation \ref{p4}, we can get four solutions, namely $1$, $0.3741245714$, $0.4352557643$, $1.085749994$.\\
For $x_5=1$, we have $x_5=x_6=x_7=1$, then by Proposition \ref{nrp=4}, we know the left-invariant Einstein metrics corresponding to these solutions are all naturally reductive.\\
For other three solutions, the corresponding solutions of the system of equations $\{g_1=0,g_2=0,g_3=0,g_4=0\}$ with $x_1x_3x_4x_5\neq0$ are as follows:
\begin{equation*}
\begin{split}
&\{x_1=0.06992197765, x_3=x_4=x_5=0.3741245714\},\\
&\{x_1=1.574157664, x_3=x_4=x_5=0.4352557643\},\\
&\{x_1=0.1259345024, x_3=x_4=x_5=1.085749994\}.
\end{split}
\end{equation*}
Due to Proposition \ref{nrp=4}, the left-invariant Einstein metrics induced by these solutions are all naturally reductive.

The solutions of $h(x_5)=0$ can be given approximately by $\{x_5=0.3952383758, x_5=0.4800791989, x_5=0.4889224428, x_5=0.6243909850, x_5=0.8764616162, x_5=0.9877146527, x_5=1.214528817\}$ and the corresponding solutions of the system of equations $\{g_1=0,g_2=0,g_3=0,g_4=0,h(x_5)=0\}$ with $x_1x_3x_4x_5\neq0$ are as follows:
\begin{equation*}
\begin{split}
&\{x_1=0.07185376030, x_3=0.08311707788, x_4=0.5173806186, x_5=0.3952383758\},\\
&\{x_1=1.505180802, x_3=0.09335085020, x_4=0.6214188681, x_5=0.4800791989\},\\
&\{x_1=0.07131646202, x_3=0.6268846419, x_4=0.2651770624, x_5=0.4889224428\},\\
&\{x_1=1.506498452, x_3=0.8045481479, x_4=0.3009635903, x_5=0.6243909850\},\\
&\{x_1=1.119004750, x_3=1.1136471437, x_4=1.063993479, x_5=0.8764616162\},\\
&\{x_1=0.08089954767, x_3=0.08095556860, x_4=0.2627271790, x_5=0.9877146527\},\\
&\{x_1=0.1003478332, x_3=1.505404155, x_4=0.3341288081, x_5=1.214528817\},\\
\end{split}
\end{equation*}
Due to Proposition \ref{nrp=4}, the left-invariant Einstein metrics induced by these solutions are all non-naturally reductive. 

In summarize, we find 7 different non-naturally reductive left-invariant Einstein metrics on $\E_7$-I.
\textbf{Case of $\E_7$-II.} By Lemma \ref{coeff7}, the components of Ricci tensor with respect to the metric (\ref{metric7}) are as follows:
\[\left\{\begin{aligned}
r_0&=\frac{1}{4}\left(\frac{4u_0}{9x_3^2}+\frac{5u_0}{9x_4^2}\right),\\
r_1&=\frac{1}{12}\left(\frac{1}{3x_1}+\frac{x_1}{x_3^2}+\frac{5x_1}{3x_5^2}\right),\\
r_2&=\frac{1}{140}\left(\frac{35}{3x_2}+\frac{35x_2}{9x_3^2}+\frac{70x_2}{9x_4^2}+\frac{35x_2}{3x_5^2}\right),\\
r_3&=\frac{1}{2x_3}+\frac{5}{36}\left(\frac{x_3}{x_4x_5}-\frac{x_4}{x_5x_3}-\frac{x_5}{x_3x_4}\right)-\frac{1}{48}\left(\frac{4u_0}{9x_3^2}+\frac{x_1}{x_3^2}+\frac{35x_2}{9x_3^2}\right),\\
r_4&=\frac{1}{2x_4}+\frac{1}{9}\left(\frac{x_4}{x_5x_3}-\frac{x_5}{x_3x_4}-\frac{x_3}{x_4x_5}\right)-\frac{1}{60}\left(\frac{5u_0}{9x_4^2}+\frac{70x_2}{9x_4^2}\right),\\
r_5&=\frac{1}{2x_5}+\frac{1}{12}\left(\frac{x_5}{x_3x_4}-\frac{x_3}{x_4x_5}-\frac{x_4}{x_5x_3}\right)-\frac{1}{80}\left(\frac{5x_1}{3x_5^2}+\frac{35x_2}{3x_5^2}\right).
\end{aligned}\right.\]

Then we will give a criterion to decide whether a metric of the form (\ref{metric7}) is naturally reductive.
\begin{prop}\label{nr7}
If a left-invariant metric $<\ ,\ >$ of the form (\ref{metric7}) on $G=\E_7$ is naturally reductive with respect to $G\times L$ for some closed subgroup $L$ of $G$, then one of the following holds:
\begin{equation*}
1)u_0=x_1=x_2=x_3, x_4=x_5\quad 2)u_0=x_2=x_4, x_3=x_5\quad 3)x_1=x_2=x_5, x_3=x_4\quad 4)x_3=x_4=x_5.
\end{equation*}
Conversely, if one of the conditions 1), 2), 3), 4) holds, then the metric $<\ ,\ >$ of the form (\ref{metric7}) is naturally reductive with respect to $G\times L$ for some closed subgroup $L$ of $G$.
\end{prop}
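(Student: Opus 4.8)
The plan is to follow the proof of Proposition~\ref{nrp=2}, the new ingredient being that $\E_7$-II, like every generalized Wallach space, carries a Klein four group of commuting involutions: for each $i\in\{1,2,3\}$ the subspace $\fk\oplus\p_i$ is a subalgebra, $(\g,\fk\oplus\p_i)$ is an irreducible symmetric pair, and its $(-1)$-eigenspace $\p_j\oplus\p_k$ is $\Ad(\fk\oplus\p_i)$-irreducible over $\mathbb{R}$. So the first step is to record the three symmetric subalgebras explicitly: $\fk\oplus\p_1=\T\oplus A_1\oplus A_5\oplus\p_1\cong\mathfrak{su}(8)$ and $\fk\oplus\p_2=A_1\oplus(\T\oplus A_5\oplus\p_2)\cong\mathfrak{su}(2)\oplus\mathfrak{so}(12)$ from (\ref{decom71})--(\ref{decom72}), while for the product involution $\sigma\tau$ one has $\fk\oplus\p_3=\T\oplus(A_1\oplus A_5\oplus\p_3)\cong\mathbb{R}\oplus\mathfrak{e}_6$, the isotropy algebra of the Hermitian symmetric space $\E_7/(\E_6\cdot\mathrm{U}(1))$, with $\T$ the center and $A_1\oplus A_5\oplus\p_3\cong\mathfrak{e}_6$ the semisimple part (inside which $A_1\oplus A_5$ has maximal rank).

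For the forward implication, write $\fl=\mathrm{Lie}(L)$ (we may take $L$ connected). Natural reductivity with respect to $G\times L$ makes $\langle\ ,\ \rangle$ $\Ad(L)$-invariant, and since it has the form (\ref{metric7}) it is $\Ad(K)$-invariant; hence it is $\Ad(H)$-invariant, where $H$ is the connected subgroup whose Lie algebra $\frak h$ is the subalgebra generated by $\fl$ and $\fk$. If $\fl\subseteq\fk$, then $\p_1\oplus\p_2\oplus\p_3=\fk^\perp\subseteq\fl^\perp$, so by the converse part of Theorem~\ref{nr} (applied to $L$) the metric is a scalar multiple of $B$ on $\fl^\perp$, giving $x_3=x_4=x_5$, which is case~4). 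If $\fl\not\subseteq\fk$, then $\frak h$ is an $\mathrm{ad}(\fk)$-invariant subalgebra strictly containing $\fk$; as the $\mathrm{ad}(\fk)$-invariant subspaces of $\m$ are exactly the partial sums of the (pairwise non-isomorphic) $\p_i$, we get $\frak h=\fk\oplus\bigoplus_{i\in S}\p_i$ with $\varnothing\neq S\subseteq\{1,2,3\}$, and $[\p_i,\p_j]\subseteq\p_k$ forces $S\in\{\{1\},\{2\},\{3\},\{1,2,3\}\}$. For $S=\{1,2,3\}$ the metric is bi-invariant, a common special case of 1)--4). For $S=\{i\}$ we have $\frak h=\fk\oplus\p_i$, and $\g$ splits into $\Ad(H)$-irreducible summands as the simple ideals (and, when $i=3$, the one-dimensional center $\T$) of $\fk\oplus\p_i$ together with $\p_j\oplus\p_k$; $\Ad(H)$-invariance makes $\langle\ ,\ \rangle$ a scalar multiple of $B$ on each of these, which yields $u_0=x_1=x_2=x_3,\ x_4=x_5$ for $i=1$ (case~1), $u_0=x_2=x_4,\ x_3=x_5$ for $i=2$ (case~2), and $x_1=x_2=x_5,\ x_3=x_4$ for $i=3$ (case~3), with $u_0$ now free precisely because $\T$ is its own $\Ad(\fk\oplus\p_3)$-summand. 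Note that in these last cases only the $\Ad(L)$-invariance implied by natural reductivity is used.

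The converse is read off from the forward direction of Theorem~\ref{nr}: for the subgroup with Lie algebra $\mathfrak{su}(8)$, $\mathfrak{su}(2)\oplus\mathfrak{so}(12)$, $\mathbb{R}\oplus\mathfrak{e}_6$, or $\fk$ respectively, a metric obeying 1), 2), 3), or 4) is exactly a D'Atri--Ziller metric $u_0 b|_{(\cdot)_0}+\sum u_iB|_{(\cdot)_i}+xB|_{(\cdot)^\perp}$ for that subgroup, hence naturally reductive. I expect the genuinely non-routine step to be the structural identification of $\fk\oplus\p_3$: the cleanest argument is that $(\g,\fk\oplus\p_3)$ is an irreducible symmetric pair of $\mathfrak{e}_7$, which has exactly three, $\mathfrak{su}(8)$, $\mathfrak{su}(2)\oplus\mathfrak{so}(12)$ and $\mathbb{R}\oplus\mathfrak{e}_6$; since the first two occur as $\fk\oplus\p_1$ and $\fk\oplus\p_2$, necessarily $\fk\oplus\p_3\cong\mathbb{R}\oplus\mathfrak{e}_6$, and $\T=Z(\fk)$ is its $\mathbb{R}$-summand because the centralizer of the maximal-rank subalgebra $A_1\oplus A_5$ inside $\mathbb{R}\oplus\mathfrak{e}_6$ reduces to $\mathbb{R}$ while $\T$ lies in it; hence $[\T,\p_3]=0$, the algebra $A_1\oplus A_5\oplus\p_3\cong\mathfrak{e}_6$ is really a subalgebra, and the freedom of $u_0$ in case~3) is correct. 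A secondary point requiring care is the exact form of the D'Atri--Ziller statement used in the $\fl\subseteq\fk$ case: one needs natural reductivity with respect to the specific pair $G\times L$ to force $\langle\ ,\ \rangle$ to be a scalar multiple of $B$ on $\fl^\perp$, not merely to produce some a priori unrelated subgroup.
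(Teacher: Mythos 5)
Your proposal is correct and follows essentially the same route as the paper: split into $\fl\subset\fk$ versus $\fl\not\subset\fk$, pass to the subalgebra $\frak{h}$ generated by $\fl$ and $\fk$, identify $\fk\oplus\p_1\cong A_7$, $\fk\oplus\p_2\cong A_1\oplus D_6$, $\fk\oplus\p_3\cong\T\oplus\E_6$ as the fixed-point algebras of $\sigma$, $\tau$, $\sigma\tau$, and apply Theorem \ref{nr} in both directions. You merely add details the paper leaves implicit (the enumeration of possible $\frak{h}$ including the bi-invariant case $\frak{h}=\g$, and the verification that $\T$ is central in $\fk\oplus\p_3$), which are consistent with, and strengthen, the paper's argument.
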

\begin{proof}
Let $\frak{l}$ be the Lie algebra of $L$. Then we have either $\frak{l}\subset\fk$ or $\frak{l}\not\subset\fk$. For the case of $\frak{l}\not\subset\fk$. Let $\frak{h}$ be the subalgebra of $\g$ generated by $\frak{l}$ and $\fk$. Since $\g=\T\oplus A_1\oplus A_5\oplus\p_1\oplus\p_2\oplus\p_3$ and the structure of generalized Wallach spaces, there must only one $\p_i$ contained in $\frak{h}$. If $\p_1\subset\frak{h}$, then $\frak{h}=\fk\oplus\p_1\cong A_7$, which is in fact the set of fixed points of the involutive automorphism $\sigma$. Due to Theorem \ref{nr}, we have $u_0=x_1=x_2=x_3, x_4=x_5$. If $\p_2\subset\frak{h}$, then $\frak{h}=\fk\oplus\p_2\cong A_1\oplus D_6$, which is in fact the set of the fixed points of the involutive automorphism $\tau$, as a result of Theorem \ref{nr}, we have $u_0=x_2=x_4, x_3=x_5$. If $\p_3\subset\frak{h}$, then $\frak{h}=\fk\oplus\p_3\cong\T\oplus\E_6$, which is corresponds to the involutive $\sigma\tau=\tau\sigma$ \cite{ChKaLi}, due to Theorem \ref{nr}, we have $x_1=x_2=x_5,x_3=x_4$.

We proceed with the case $\frak{l}\subset\frak{k}$. Because the orthogonal complement $\frak{l}^{\perp}$ of $\frak{l}$ with respect to $B$ contains the orthogonal complement $\fk^{\perp}$ of $\fk$, it follows that $\p_1\oplus\p_2\oplus\p_3\subset\frak{l}^{\perp}$. Since the invariant metric $<\ ,\ >$ is naturally reductive with respect to $G\times L$, we conclude that $x_3 = x_4 = x_5$ by Theorem \ref{nr}.

The converse is a direct conclusion of Theorem \ref{nr}.
\end{proof}

Recall that the homogeneous Einstein equation for the left-invariant metric $<\ ,\ >$ is given by
$$\{r_0-r_1=0, r_1-r_2=0, r_2-r_3=0, r_3-r_4=0,r_4-r_5=0\}.$$
We normalize the metric by setting $u_0=1$, then the homogeneous Einstein equation is equivalent to the following system of equations:
\[\left\{\begin{aligned}
g_0 = &-5\,{x_{{1}}}^{2}{x_{{3}}}^{2}{x_{{4}}}^{2}-3\,{x_{{1}}}^{2}{x_{{4}}}^
{2}{x_{{5}}}^{2}-{x_{{3}}}^{2}{x_{{4}}}^{2}{x_{{5}}}^{2}+5\,x_{{1}}{x_
{{3}}}^{2}{x_{{5}}}^{2}+4\,x_{{1}}{x_{{4}}}^{2}{x_{{5}}}^{2}=0,\\
g_1 = &5\,{x_{{1}}}^{2}x_{{2}}{x_{{3}}}^{2}{x_{{4}}}^{2}+3\,{x_{{1}}}^{2}x_{{
2}}{x_{{4}}}^{2}{x_{{5}}}^{2}-3\,x_{{1}}{x_{{2}}}^{2}{x_{{3}}}^{2}{x_{
{4}}}^{2}-2\,x_{{1}}{x_{{2}}}^{2}{x_{{3}}}^{2}{x_{{5}}}^{2}-x_{{1}}{x_
{{2}}}^{2}{x_{{4}}}^{2}{x_{{5}}}^{2}-3\,x_{{1}}{x_{{3}}}^{2}{x_{{4}}}^
{2}{x_{{5}}}^{2}\\&+x_{{2}}{x_{{3}}}^{2}{x_{{4}}}^{2}{x_{{5}}}^{2}=0,\\
g_2=&9\,x_{{1}}x_{{2}}{x_{{4}}}^{2}{x_{{5}}}^{2}+36\,{x_{{2}}}^{2}{x_{{3}}}
^{2}{x_{{4}}}^{2}+24\,{x_{{2}}}^{2}{x_{{3}}}^{2}{x_{{5}}}^{2}+47\,{x_{
{2}}}^{2}{x_{{4}}}^{2}{x_{{5}}}^{2}-60\,x_{{2}}{x_{{3}}}^{3}x_{{4}}x_{
{5}}+60\,x_{{2}}x_{{3}}{x_{{4}}}^{3}x_{{5}}\\&-216\,x_{{2}}x_{{3}}{x_{{4}
}}^{2}{x_{{5}}}^{2}+60\,x_{{2}}x_{{3}}x_{{4}}{x_{{5}}}^{3}+36\,{x_{{3}
}}^{2}{x_{{4}}}^{2}{x_{{5}}}^{2}+4\,x_{{2}}{x_{{4}}}^{2}{x_{{5}}}^{2}=0,\\
g_3=&-9\,x_{{1}}{x_{{4}}}^{2}x_{{5}}+56\,x_{{2}}{x_{{3}}}^{2}x_{{5}}-35\,x_
{{2}}{x_{{4}}}^{2}x_{{5}}+108\,{x_{{3}}}^{3}x_{{4}}-216\,{x_{{3}}}^{2}
x_{{4}}x_{{5}}-108\,x_{{3}}{x_{{4}}}^{3}+216\,x_{{3}}{x_{{4}}}^{2}x_{{
5}}\\&-12\,x_{{3}}x_{{4}}{x_{{5}}}^{2}+4\,{x_{{3}}}^{2}x_{{5}}-4\,{x_{{4}
}}^{2}x_{{5}}=0,\\
g_4=&9\,x_{{1}}x_{{3}}{x_{{4}}}^{2}+63\,x_{{2}}x_{{3}}{x_{{4}}}^{2}-56\,x_{
{2}}x_{{3}}{x_{{5}}}^{2}-12\,{x_{{3}}}^{2}x_{{4}}x_{{5}}-216\,x_{{3}}{
x_{{4}}}^{2}x_{{5}}+216\,x_{{3}}x_{{4}}{x_{{5}}}^{2}+84\,{x_{{4}}}^{3}
x_{{5}}\\&-84\,x_{{4}}{x_{{5}}}^{3}-4\,x_{{3}}{x_{{5}}}^{2}=0.
\end{aligned}\right.\]
Consider the polynomial ring $R=\mathbb{Q}[z, x_1, x_2, x_3, x_4, x_5]$ and the ideal $I$, generated by polynomials $\{ z x_1 x_2 x_3 x_4 x_5-1, g_0, g_1, g_2, g_3, g_4\}$. We take a lexicographic ordering $>$, with $z > x_1 > x_2 > x_3 > x_4 > x_5$ for a monomial ordering on $R$. Then, by the aid of computer, we see that a Gr\"{o}bner basis for the ideal $I$ contains a polynomial of $x_5$ given by
\begin{equation*}
 ( x_{{5}}-1)( 1067x_{{5}}-392)( 2x_{{5}}-7)( 875{x_{{5}}}^{3}-5155{x_{{5}}}^{2}+9379x_{{5}}\\-4949)h(x_5), 
\end{equation*}
where $h(x_5)$ is a polynomial of degree 78. Since the length of this polynomial may affect the readers to read, we put it in the Appendix III.

We remark that $x_1, x_2, x_3, x_4$ can be written into a polynomial of $x_5$ with coefficient of rational numbers. By solving $h(x_5)=0$ numerically, we get 6 solutions, namely $x_5\yd0.3954420465,  x_5\yd0.7869165511, x_5\yd1.022441180, x_5\yd1.525178916, x_5\yd2.907605999, x_5\yd3.996569735$. Further more, the corresponding solutions of the system of equations $\{g_0=0, g_1=0, g_2=0, g_3=0, g_4=0, h(x_5)=0\}$ with $x_1x_2x_3x_4x_5\neq0$ are as follows:
\begin{equation*}
\begin{split}
&\{x_1\yd0.6527831128,x_2\yd0.4342037927,x_3\yd0.7023547363,x_4\yd0.7181567785,x_5\yd0.3954420465\},\\
&\{x_1\yd1.238139339,x_2\yd0.2406838191,x_3\yd0.9516792542,x_4\yd0.6904065038,x_5\yd0.7869165511\},\\
&\{x_1\yd0.07716292844,x_2\yd0.2617256871,x_3\yd1.140229546,x_4\yd0.6923748274,x_5\yd1.022441180\},\\
&\{x_1\yd0.1125592068,x_2\yd0.3602427458,x_3\yd0.7175220814,x_4\yd1.576177679,x_5\yd1.525178916\},\\
&\{x_1\yd1.055549830,x_2\yd0.7580899024,x_3\yd0.9219014311,x_4\yd2.908338968,x_5\yd2.907605999\},\\
&\{x_1\yd0.3623080653,x_2\yd1.354302959,x_3\yd1.066348568,x_4\yd4.005454245,x_5\yd3.996569735\}.
\end{split}
\end{equation*}
Due to Proposition \ref{nr7}, we conclude that these six solutions induce six different non-naturally reductive left-invariant Einstein metrics on $\E_7$.

For $x_5=1, x_5=\frac{392}{1067}$ and $x_5=\frac{7}{2}$, the corresponding solution of the system of equations $\{g_0=0, g_1=0, g_2=0, g_3=0, g_4=0\}$ with $x_1x_2x_3x_4x_5\neq0$ are as follows:
\begin{equation*}
\begin{split}
&\{x_1=x_2=x_3=x_4=x_5=1\},\\
&\{x_1=x_2=x_5=\frac{392}{1067},x_3=x_4=\frac{742}{1067}\},\\
&\{x_1=x_2=x_3=1,x_4=x_5=\frac{7}{2}\}.
\end{split}
\end{equation*}
Due to Proposition \ref{nr7}, the left invariant Einstein metrics induced by these three solutions are all naturally reductive.

For $(875{x_{{5}}}^{3}-5155{x_{{5}}}^{2}+9379x_{{5}}-4949)=0$, the solutions of the system of equations $\{g_0=0, g_1=0, g_2=0, g_3=0, g_4=0,(875{x_{{5}}}^{3}-5155{x_{{5}}}^{2}+9379x_{{5}}-4949)=0\}$ with $x_1x_2x_3x_4x_5\neq0$ are as follows:
\begin{equation*}
\begin{split}
&\{x_1\yd0.1159884900,x_2=x_4=1,x_3=x_5\yd0.9210223401\},\\
&\{x_1\yd3.616626805,x_2=x_4=1,x_3=x_5\yd2.297499727\},\\
&\{x_1\yd0.1868949087,x_2=x_4=1,x_3=x_5\yd2.672906503\}.
\end{split}
\end{equation*}
Due to Proposition \ref{nr7}, the left invariant Einstein metrics induced by these three solutions are all naturally reductive.

In conclusion, we find six different left-invariant Einstein metrics on $\E_7$ which are non-naturally reductive.

\textbf{Case of $\E_6$-II.} By Lemma \ref{coeff6}, the components of Ricci tensor with respect to the metric (\ref{metric6}) can be expressed as follows:
\[\left\{\begin{aligned}
r_0&=\frac{1}{4}\left(\frac{u_0}{2x_4^2}+\frac{u_0}{2x_5^2}\right),\\
r_1&=\frac{1}{12}\left(\frac{1}{2x_1}+\frac{x_1}{x_5^2}+\frac{3x_1}{2x_6^2}\right),\\
r_2&=\frac{1}{12}\left(\frac{1}{2x_2}+\frac{x_2}{x_4^2}+\frac{3x_2}{2x_6^2}\right),\\
r_3&=\frac{1}{60}\left(\frac{5}{x_3}+\frac{5x_3}{2x_4^2}+\frac{5x_3}{2x_5^2}+\frac{5x_3}{x_6^2}\right),\\
r_4&=\frac{1}{2x_4}+\frac{1}{8}\left(\frac{x_4}{x_5x_6}-\frac{x_5}{x_6x_4}-\frac{x_6}{x_4x_5}\right)-\frac{1}{32}\left(\frac{u_0}{2x_4^2}+\frac{x_2}{x_4^2}+\frac{5x_3}{2x_4^2}\right),\\
r_5&=\frac{1}{2x_5}+\frac{1}{8}\left(\frac{x_5}{x_6x_4}-\frac{x_6}{x_4x_5}-\frac{x_4}{x_5x_6}\right)-\frac{1}{32}\left(\frac{u_0}{2x_5^2}+\frac{x_1}{x_5^2}+\frac{5x_3}{2x_5^2}\right),\\
r_6&=\frac{1}{2x_6}+\frac{1}{12}\left(\frac{x_6}{x_4x_5}-\frac{x_4}{x_5x_6}-\frac{x_5}{x_6x_4}\right)-\frac{1}{48}\left(\frac{3x_1}{2x_6^2}+\frac{3x_2}{2x_6^2}+\frac{5x_3}{x_6^2}\right).
\end{aligned}\right.\]

The we will give a criterion to decide whether a left-invariant metric of the form (\ref{metric6}) on $\E_6$ is naturally reductive.
\begin{prop}\label{nr6}
If a left-invariant metric $<\ ,\ >$ of the form (\ref{metric6}) on $G=\E_6$ is naturally reductive with respect to $G\times L$ for some closed subgroup $L$ of $G$, then one of the following holds:
\begin{equation*}
1)u_0=x_2=x_3=x_4, x_5=x_6\ 2)u_0=x_1=x_3=x_5, x_4=x_6\ 3)x_1=x_2=x_3=x_6, x_4=x_5\ 4)x_4=x_5=x_6.
\end{equation*}
Conversely, if one of the conditions 1), 2), 3), 4) holds, then the metric $<\ ,\ >$ of the form (\ref{metric6}) is naturally reductive with respect to $G\times L$ for some closed subgroup $L$ of $G$.
\end{prop}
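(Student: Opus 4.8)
The plan is to argue exactly as in the proof of Proposition \ref{nr7}, using that $\E_6$-II has the same formal shape as $\E_7$-II: a generalized Wallach decomposition $\g=\fk\oplus\p_1\oplus\p_2\oplus\p_3$ with $\fk=\T\oplus A_1^1\oplus A_1^2\oplus A_3$, whose two commuting involutions $\sigma,\tau$ from \cite{ChKaLi} single out three intermediate subalgebras $\fk\oplus\p_i$. I would let $\frak{l}$ be the Lie algebra of $L$ and split into the cases $\frak{l}\not\subset\fk$ and $\frak{l}\subset\fk$.

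First I would treat $\frak{l}\not\subset\fk$. Let $\frak{h}$ be the subalgebra of $\g$ generated by $\frak{l}$ and $\fk$; since $\frak{h}\supset\fk$ it is $\Ad(\fk)$-invariant, so $\frak{h}\cap(\p_1\oplus\p_2\oplus\p_3)$ is a nonzero $\Ad(\fk)$-submodule (it contains $X-X_{\fk}$ for any $X\in\frak{l}$ with nonzero $\p$-component), and using $[\p_i,\p_i]\subset\fk$ together with $[\p_i,\p_j]=\p_k$ (the latter because $\g$ is simple) one checks, as in Proposition \ref{nrp=2}, that $\frak{h}$ contains exactly one summand $\p_i$, hence $\frak{h}=\fk\oplus\p_i$. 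The three possibilities are
\begin{equation*}
\fk\oplus\p_1\cong A_1^1\oplus A_5,\qquad \fk\oplus\p_2\cong A_1^2\oplus A_5,\qquad \fk\oplus\p_3\cong\T\oplus D_5,
\end{equation*}
namely the fixed-point subalgebras of $\sigma$, of $\tau$ and of $\sigma\tau=\tau\sigma$, where $\T\oplus A_1^2\oplus A_3\oplus\p_1\cong A_5$, $\T\oplus A_1^1\oplus A_3\oplus\p_2\cong A_5$ and $A_1^1\oplus A_1^2\oplus A_3\oplus\p_3\cong D_5$. Applying Theorem \ref{nr} to $L$ in each case and comparing (\ref{metric6}) with the form it prescribes for $L$ then forces case 1), case 2) and case 3) respectively. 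The one feature that is new relative to the $A_7$-fibration in $\E_7$-II is that in $\fk\oplus\p_3$ the summand $\T$ is the \emph{center} of $\frak{h}$, so Theorem \ref{nr} allows an arbitrary inner product there; this is precisely why case 3) imposes no condition on $u_0$.

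Next I would treat $\frak{l}\subset\fk$. Then the $B$-orthogonal complement $\frak{l}^{\perp}$ contains $\fk^{\perp}=\p_1\oplus\p_2\oplus\p_3$, so these three summands lie in the reductive complement of $\frak{l}$ in $\g$, and natural reductivity of $\langle\ ,\ \rangle$ with respect to $G\times L$ together with Theorem \ref{nr} forces the metric to be a single multiple of $B$ there, i.e. $x_4=x_5=x_6$, which is case 4). For the converse, each of the conditions 1), 2), 3), 4) places (\ref{metric6}) into the form of the first part of Theorem \ref{nr} for a suitable closed subgroup of $\E_6$ — those with Lie algebras $A_1^1\oplus A_5$, $A_1^2\oplus A_5$, $\T\oplus D_5$ and $\fk$ itself — so the metric is naturally reductive.

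The hard part is not the formal bookkeeping, which copies Proposition \ref{nr7} almost verbatim, but pinning down the isomorphism types of the three subalgebras $\fk\oplus\p_i$, and in particular verifying $\fk\oplus\p_3\cong\T\oplus D_5$, i.e. that the fixed subalgebra of $\sigma\tau$ is $\mathfrak{so}(10)\oplus\mathbb{R}$; this uses the explicit description of the commuting involutions of $\E_6$-II in \cite{ChKaLi} together with the classification of maximal subalgebras of $\mathfrak{e}_6$.
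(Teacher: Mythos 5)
Your proposal is correct and follows essentially the same route as the paper's own proof: the same dichotomy $\mathfrak{l}\subset\fk$ versus $\mathfrak{l}\not\subset\fk$, the same subalgebra $\frak{h}$ generated by $\mathfrak{l}$ and $\fk$ identified with $\fk\oplus\p_1\cong A_1^1\oplus A_5$, $\fk\oplus\p_2\cong A_1^2\oplus A_5$, or $\fk\oplus\p_3\cong\T\oplus D_5$ (the fixed sets of $\sigma$, $\tau$, $\sigma\tau$), and the same appeal to Theorem \ref{nr} in each case and for the converse. The extra remarks you add (why only one $\p_i$ can lie in $\frak{h}$, and why $u_0$ is unconstrained in case 3 since $\T$ is the center of $\T\oplus D_5$) are accurate refinements of what the paper states more tersely.
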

\begin{proof}
Let $\frak{l}$ be the Lie algebra of $L$. Then we have either $\frak{l}\subset\fk$ or $\frak{l}\not\subset\fk$. For the case of $\frak{l}\not\subset\fk$. Let $\frak{h}$ be the subalgebra of $\g$ generated by $\frak{l}$ and $\fk$. Since $\g=\T\oplus A_1^1\oplus A_1^2\oplus A_3\oplus\p_1\oplus\p_2\oplus\p_3$ and the structure of generalized Wallach spaces, there must only one $\p_i$ contained in $\frak{h}$. If $\p_1\subset\frak{h}$, then $\frak{h}=\fk\oplus\p_1\cong A_1^1\oplus A_5$, which is in fact the set of fixed points of the involutive automorphism $\sigma$. Due to Theorem \ref{nr}, we have $u_0=x_2=x_3=x_4,x_5=x_6$. If $\p_2\subset\frak{h}$, then $\frak{h}=\fk\oplus\p_2\cong A_1^2\oplus A_5$, which is in fact the set of the fixed points of the involutive automorphism $\tau$, as a result of Theorem \ref{nr}, we have $u_0=x_1=x_3=x_5, x_4=x_6$. If $\p_3\subset\frak{h}$, then $\frak{h}=\fk\oplus\p_3\cong\T\oplus D_5$, which is corresponds to the involutive $\sigma\tau=\tau\sigma$ \cite{ChKaLi}, due to Theorem \ref{nr}, we have $x_1=x_2=x_3=x_6,x_4=x_5$.

We proceed with the case $\frak{l}\subset\frak{k}$. Because the orthogonal complement $\frak{l}^{\perp}$ of $\frak{l}$ with respect to $B$ contains the orthogonal complement $\fk^{\perp}$ of $\fk$, it follows that $\p_1\oplus\p_2\oplus\p_3\subset\frak{l}^{\perp}$. Since the invariant metric $<\ ,\ >$ is naturally reductive with respect to $G\times L$, we conclude that $x_4 = x_5 = x_6$ by Theorem \ref{nr}.

The converse is a direct conclusion of Theorem \ref{nr}.
\end{proof}

Recall that the homogeneous Einstein equation for the left-invariant metric $<\ ,\ >$ is given by
$$\{r_0-r_1=0, r_1-r_2=0, r_2-r_3=0, r_3-r_4=0,r_4-r_5=0\}.$$
Then finding Einstein metrics of the form (\ref{metric6}) reduces to finding the positive solutions of the above system and we normalize the metric by setting $x_6=1$, then the homogeneous Einstein equation is equivalent to the following system of equations:
\[\left\{\begin{aligned}
g_0 = &-3\,{x_{{1}}}^{2}{x_{{4}}}^{2}{x_{{5}}}^{2}+3\,u_{{0}}x_{{1}}{x_{{4}}}
^{2}+3\,u_{{0}}x_{{1}}{x_{{5}}}^{2}-2\,{x_{{1}}}^{2}{x_{{4}}}^{2}-{x_{
{4}}}^{2}{x_{{5}}}^{2}=0,\\
g_1 = &3\,{x_{{1}}}^{2}x_{{2}}{x_{{4}}}^{2}{x_{{5}}}^{2}-3\,x_{{1}}{x_{{2}}}^
{2}{x_{{4}}}^{2}{x_{{5}}}^{2}+2\,{x_{{1}}}^{2}x_{{2}}{x_{{4}}}^{2}-2\,
x_{{1}}{x_{{2}}}^{2}{x_{{5}}}^{2}-x_{{1}}{x_{{4}}}^{2}{x_{{5}}}^{2}+x_
{{2}}{x_{{4}}}^{2}{x_{{5}}}^{2}=0,\\
g_2=&3\,{x_{{2}}}^{2}x_{{3}}{x_{{4}}}^{2}{x_{{5}}}^{2}-2\,x_{{2}}{x_{{3}}}^
{2}{x_{{4}}}^{2}{x_{{5}}}^{2}+2\,{x_{{2}}}^{2}x_{{3}}{x_{{5}}}^{2}-x_{
{2}}{x_{{3}}}^{2}{x_{{4}}}^{2}-x_{{2}}{x_{{3}}}^{2}{x_{{5}}}^{2}-2\,x_
{{2}}{x_{{4}}}^{2}{x_{{5}}}^{2}+x_{{3}}{x_{{4}}}^{2}{x_{{5}}}^{2}=0,\\
g_3=&16\,{x_{{3}}}^{2}{x_{{4}}}^{2}{x_{{5}}}^{2}-24\,x_{{3}}{x_{{4}}}^{3}x_
{{5}}+24\,x_{{3}}x_{{4}}{x_{{5}}}^{3}+3\,u_{{0}}x_{{3}}{x_{{5}}}^{2}+6
\,x_{{2}}x_{{3}}{x_{{5}}}^{2}+8\,{x_{{3}}}^{2}{x_{{4}}}^{2}+23\,{x_{{3
}}}^{2}{x_{{5}}}^{2}\\&-96\,x_{{3}}x_{{4}}{x_{{5}}}^{2}+16\,{x_{{4}}}^{2}
{x_{{5}}}^{2}+24\,x_{{3}}x_{{4}}x_{{5}}=0,\\
g_4=&16\,{x_{{4}}}^{3}x_{{5}}-16\,x_{{4}}{x_{{5}}}^{3}+u_{{0}}{x_{{4}}}^{2}
-u_{{0}}{x_{{5}}}^{2}+2\,x_{{1}}{x_{{4}}}^{2}-2\,x_{{2}}{x_{{5}}}^{2}+
5\,x_{{3}}{x_{{4}}}^{2}-5\,x_{{3}}{x_{{5}}}^{2}-32\,{x_{{4}}}^{2}x_{{5
}}\\&+32\,x_{{4}}{x_{{5}}}^{2}=0,\\
g_5=&6\,x_{{1}}x_{{4}}{x_{{5}}}^{2}+6\,x_{{2}}x_{{4}}{x_{{5}}}^{2}+20\,x_{{
3}}x_{{4}}{x_{{5}}}^{2}-8\,{x_{{4}}}^{2}x_{{5}}-96\,x_{{4}}{x_{{5}}}^{
2}+40\,{x_{{5}}}^{3}-3\,u_{{0}}x_{{4}}-6\,x_{{1}}x_{{4}}-15\,x_{{3}}x_
{{4}}\\&+96\,x_{{4}}x_{{5}}-40\,x_{{5}}=0.
\end{aligned}\right.\]
Consider the polynomial ring $R=\mathbb{Q}[z, u_0, x_1, x_2, x_3, x_4, x_5]$ and the ideal $I$, generated by polynomials $\{ z u_0 x_1 x_2 x_3 x_4 x_5 -1, g_0, g_1, g_2, g_3, g_4, g_5\}$. We take a lexicographic ordering $>$, with $z > u_0 > x_1 > x_2 > x_3 > x_4 > x_5$ for a monomial ordering on $R$. Then, by the aid of computer, we see that a Gr\"{o}bner basis for the ideal $I$ contains a polynomial of $x_5$ given by $( x_{{5}}-1 )  ( 17\,x_{{5}}-31 )  ( 319\,{x_{{5}}}^{3}-585\,{x_{{5}}}^{2}+298\,x_{{5}}-46)\cdot
h(x_5)$, where $h(x_5)$ is a polynomial of degree 178, we put it in the Appendix IV, since its length may affect our readers to read. In fact, we remark that with the polynomials in the Gr\"{o}bner basis, $x_i$ can be written into a expression of $x_{i+1}, \cdots, x_5$ and $i=1,\cdots,5$, while $u_0$ can be written into a expression of $x_1,x_2,x_3,x_4,x_5$.

By solving $h(x_5)=0$ numerically, there exists 14 positive solutions which can be given approximately by $x_5\yd0.3190072071$, $x_5\yd0.3565775930$, $x_5\yd0.4054489785$, $x_5\yd0.4709163886$, $x_5\yd0.5455899299$, $x_5\yd0.7832400305$, $x_5\yd1.000773211$, $x_5\yd1.002658584$, $x_5\yd1.003465783$, $x_5\yd1.006528315$, $x_5\yd1.069488872$, $x_5\yd1.155548556$, $x_5\yd1.646506483$, $x_5\yd1.695781258$. Moreover, the corresponding solutions of the system of equations $\{g_0=0,g_1=0,g_2=0,g_3=0,g_4=0,g_5=0,h(x_5)=0\}$ with $u_0x_1x_2x_3x_4x_5\neq0$ can be split into the following 7 groups:
\[\mbox{1.}\left\{\footnotesize\begin{aligned}
&\{u_0\yd0.3120392058,x_1\yd0.1471819373,x_2\yd0.1040632043,x_3\yd0.4015791280,x_4\yd1.003465783,x_5\yd0.3190072071\},\\
&\{u_0\yd0.3120392058,x_1\yd0.1040632043,x_2\yd0.1471819373,x_3\yd0.4015791280,x_4\yd0.3190072071,x_5\yd1.003465783\},\\
\end{aligned}\right.\]
\[\mbox{2.}\left\{\footnotesize\begin{aligned}
&\{u_0\yd0.3832451893,x_1\yd0.4156187592,x_2\yd0.1033703333,x_3\yd0.2795983424,x_4\yd1.000773211,x_5\yd0.3565775930\},\\
&\{u_0\yd0.3832451893,x_1\yd0.1033703333,x_2\yd0.4156187592,x_3\yd0.2795983424,x_4\yd0.3565775930,x_5\yd1.000773211\},\\
\end{aligned}\right.\]
\[\mbox{3.}\left\{\footnotesize\begin{aligned}
&\{u_0\yd0.4028095641,x_1\yd0.1658986529,x_2\yd1.591316257,x_3\yd0.5073650261,x_4\yd1.006528315,x_5\yd0.4054489785\},\\
&\{u_0\yd0.4028095641,x_1\yd1.591316257,x_2\yd0.1658986529,x_3\yd0.5073650261,x_4\yd0.4054489785,x_5\yd1.006528315\},\\
\end{aligned}\right.\]
\[\mbox{4.}\left\{\footnotesize\begin{aligned}
&\{u_0\yd0.5209148934,x_1\yd0.5695949321,x_2\yd1.598554538,x_3\yd0.3242377273,x_4\yd1.002658584,x_5\yd0.4709163886\},\\
&\{u_0\yd0.5209148934,x_1\yd1.598554538,x_2\yd0.5695949321,x_3\yd0.3242377273,x_4\yd0.4709163886,x_5\yd1.002658584\},\\
\end{aligned}\right.\]
\[\mbox{5.}\left\{\footnotesize\begin{aligned}
&\{u_0\yd0.7642362204,x_1\yd0.1166446353,x_2\yd0.1088139526,x_3\yd0.2444045194,x_4\yd1.069488872,x_5\yd0.5455899299\},\\
&\{u_0\yd0.7642362204,x_1\yd0.1088139526,x_2\yd0.1166446353,x_3\yd0.2444045194,x_4\yd0.5455889929,x_5\yd1.069488872\},\\
\end{aligned}\right.\]
\[\mbox{6.}\left\{\footnotesize\begin{aligned}
&\{u_0\yd1.095971585,x_1\yd0.1445747761,x_2\yd1.600102216,x_3\yd0.3092235071,x_4\yd1.155548556,x_5\yd0.7832400305\},\\
&\{u_0\yd1.095971585,x_1\yd1.600102216,x_2\yd0.1445747761,x_3\yd0.3092235071,x_4\yd0.7832400305,x_5\yd1.155548556\},\\
\end{aligned}\right.\]
\[\mbox{7.}\left\{\footnotesize\begin{aligned}
&\{u_0\yd2.260526144,x_1\yd0.2562898903,x_2\yd1.059701210,x_3\yd1.147117560,x_4\yd1.695781258,x_5\yd1.646506483\},\\
&\{u_0\yd2.260526144,x_1\yd1.059701210,x_2\yd0.2562898903,x_3\yd1.147117560,x_4\yd1.646506483,x_5\yd1.695781258\}.
\end{aligned}\right.\]

By solving $319\,{x_{{5}}}^{3}-585\,{x_{{5}}}^{2}+298\,x_{{5}}-46=0$ numerically, there exists three different positive solutions which can be given approximately by $x_5\yd0.3244770611$, $x_5\yd0.4009373579$ and $x_5\yd1.108447830$. Further, the corresponding solutions of the system of equations $\{g_0=0,g_1=0,g_2=0,g_3=0,g_4=0,g_5=0,319\,{x_{{5}}}^{3}-585\,{x_{{5}}}^{2}+298\,x_{{5}}-46=0\}$ with $u_0x_1x_2x_3x_4x_5\neq0$ are:
\[\mbox{8.}\left\{\begin{aligned}
&\{u_0=x_1=x_3=x_5\yd0.3244770611,x_2\yd0.1030504001,x_4=1\},\\
&\{u_0=x_1=x_3=x_5\yd0.4009373579,x_2\yd1.613068224,x_4=1\},\\
&\{u_0=x_1=x_3=x_5\yd1.108447830,x_2\yd0.1984241041,x_4=1\}.
\end{aligned}\right.\]

For $x_5=1$, there exists four solutions of the system $\{g_1=0,g_2=0,g_3=0,g_4=0,g_5=0\}$ with $u_0x_1x_2x_3x_4x_5\neq0$ which can be given approximately by
\[\mbox{9.}\begin{aligned}
\{u_0=x_1=x_2=x_3=x_4=x_5=1\},
\end{aligned}\]
\[\mbox{10.}\left\{\begin{aligned}
&\{u_0=x_2=x_3=x_4\yd0.3244770611,x_1\yd0.1030504001,x_5=1\},\\
&\{u_0=x_2=x_3=x_4\yd0.4009373579,x_1\yd1.613068224,x_5=1\},\\
&\{u_0=x_2=x_3=x_4\yd1.108447830,x_1\yd0.1984241041,x_5=1\}.
\end{aligned}\right.\]

For $x_5=\frac{31}{17}$, the corresponding solution of the system of equations $\{g_1=0,g_2=0,g_3=0,g_4=0,g_5=0\}$ with $u_0x_1x_2x_3x_4x_5\neq0$ is 
\[\mbox{11.}\begin{aligned}
\{u_0=\frac{737}{289},x_1=x_2=x_3=1,x_4=x_5=\frac{31}{17}\}.
\end{aligned}\]

Among these solutions, we remark that the left-invariant Einstein metrics induced by the solutions in Group 8-11 are all naturally reductive, while the left-invariant Einstein metrics induced by the solutions in Group 1-7 are all non-naturally reductive according to Proposition \ref{nr6}. In particular, the metrics induced by the solutions in Group 8 and Group 10 are the same up to isometry respectively and the solutions in each of Group 1-7 induce a same metric up to isometry.

In conclusion, we find 7 different non-naturally left-invariant Einstein metrics on $\E_6$-II.

\section{Appendix I}
We put $h(x_6)$ in Section 4 here\\
\tiny$h(x_6)=288548666216102485319490298372480722534400000000000000000000000000000000000000000000000
\,{x_{{6}}}^{114}-\\
7335395742115974539503349380923216834120908800000000000000000000000000000000000000000000
\,{x_{{6}}}^{113}+\\
94078381310558994585051331187125707811005556326400000000000000000000000000000000000000000
\,{x_{{6}}}^{112}-\\
815372756534232158773437000571967604719385984368640000000000000000000000000000000000000000
\,{x_{{6}}}^{111}+\\
5394020114040582693624836326072703055152460459723980800000000000000000000000000000000000000
\,{x_{{6}}}^{110}-\\
29147068861160512043456310394532638973009479245456474112000000000000000000000000000000000000
\,{x_{{6}}}^{109}+\\
134330326689565122860196509693349702085611553735614524293120000000000000000000000000000000000
\,{x_{{6}}}^{108}-\\
543950722347413709091314078263315359554321702464653838631567360000000000000000000000000000000
\,{x_{{6}}}^{107}+\\
1977156456198764081665489415283217701559164441214210196526596096000000000000000000000000000000
\,{x_{{6}}}^{106}-\\
6553932091117232855452398232625779163507500833393670938453396684800000000000000000000000000000
\,{x_{{6}}}^{105}+\\
20051771138222265162787867070150309155860454357487955338163262586880000000000000000000000000000
\,{x_{{6}}}^{104}-\\
57149190441408854243012248124815005821521968707890639680575132139520000000000000000000000000000
\,{x_{{6}}}^{103}+\\
152835776655049481507606713389342085792929023876647000321045284939366400000000000000000000000000
\,{x_{{6}}}^{102}-\\
385756462944628713137839357523266365536724194341244461787260182897623040000000000000000000000000
\,{x_{{6}}}^{101}+\\
923246441937253023544548551112293271009138083357097396744185678294679552000000000000000000000000
\,{x_{{6}}}^{100}-\\
2103412757751515961253892845131681054271482548768957772344571292651880448000000000000000000000000
\,{x_{{6}}}^{99}+\\
4576635509329089592980580819532376494698679620316682908084360030156887162880000000000000000000000
\,{x_{{6}}}^{98}-\\
9536363644246112135881661787938304131231765937926496419549356264244148961280000000000000000000000
\,{x_{{6}}}^{97}+\\
19075075896720745327516498667217050539300287648144531385033627602887114752000000000000000000000000
\,{x_{{6}}}^{96}-\\
36702573270534659785989982819949577303959575255730333060378328961655387287715840000000000000000000
\,{x_{{6}}}^{95}+\\
68056216913739631749322574839527960190346820317441692917509489044571240158724096000000000000000000
\,{x_{{6}}}^{94}-\\
121812029770982949418572416294490232060904869125587642005056843136001221680798105600000000000000000
\,{x_{{6}}}^{93}+\\
210768703644041099247097074305412845727257394478296076350620300770567300126175395840000000000000000
\,{x_{{6}}}^{92}-\\
353023485401160038105010334971232204765337446216424784610083373458822887516598697984000000000000000
\,{x_{{6}}}^{91}+\\
573094311879980091977951108679734497114275489362396555522847525055228969066686316544000000000000000
\,{x_{{6}}}^{90}-\\
902776547435297518619872204025127546436588215347707699177613095959712708113907843072000000000000000
\,{x_{{6}}}^{89}+\\
1381470179511776696238079869975770761332760056220154172745621510511802459843682998681600000000000000
\,{x_{{6}}}^{88}-\\
2055691660816228681808727467311341844197624685508666419978584648469236044884728275271680000000000000
\,{x_{{6}}}^{87}+\\
2977509595428067235101258282418760365285755126075924171615103016656820208031235928227840000000000000
\,{x_{{6}}}^{86}-\\
4201725120405361863667354990201606709573190236890861384608467887244322436946675328090112000000000000
\,{x_{{6}}}^{85}+\\
5781756483073179454917236955355314508646903884452703502012201893138820119263630187298816000000000000
\,{x_{{6}}}^{84}-\\
7764371676592897388676886890577299023889575649802583977971700446782610657379596330493542400000000000
\,{x_{{6}}}^{83}+\\
10183620496243087634800999266392091503767020139559664690681345852294086238598894375770521600000000000
\,{x_{{6}}}^{82}-\\
13054516827199874689647995931541533885437441220682700405586454195035408459346608479398789120000000000
\,{x_{{6}}}^{81}+\\
16367178143760644912462581706756749405506470367424836184653013293361886227319582250458152960000000000
\,{x_{{6}}}^{80}-\\
20082208595708058444257930092579772418036354526175667544909927092935162381791985074106793984000000000
\,{x_{{6}}}^{79}+\\
24128089109767676333163701531096577498630224048672867437805663399205390974551999451865415680000000000
\,{x_{{6}}}^{78}-\\
28401200715041387036896988893111857222372577414558286924609319958231877059210419539825708236800000000
\,{x_{{6}}}^{77}+\\
32768861687769238956167161989233205835442693551136264891648443112259550388012676313583688089600000000
\,{x_{{6}}}^{76}-\\
37075430186581614315913211547784967892352876656501255953200659995382589809183727308920499077120000000
\,{x_{{6}}}^{75}+\\
41151153785704934764308791987090636719181983801375625528153654823770872229589873780129323048960000000
\,{x_{{6}}}^{74}-\\
44823088360265402011083462355710485240467134423408550926180182695515577114948033664144657645568000000
\,{x_{{6}}}^{73}+\\
47927115385399894568622394434700182348901704209940419379929530409209547107317917236377852489728000000
\,{x_{{6}}}^{72}-\\
50319904861589947132816254744282238135435415320599883838412063668440441095639280488162700008243200000
\,{x_{{6}}}^{71}+\\
51889630250816021795835597281647403593459135837539633376498567673844740216014707791176178306022400000
\,{x_{{6}}}^{70}-\\
52564350198681855220054382985091068588471068882030190034292454885932208798043191092370630644910080000
\,{x_{{6}}}^{69}+\\
52317215639869872635966701755733856675226626488683021867912886301886830804537686698081061757064960000
\,{x_{{6}}}^{68}-\\
51168007224088062314848347765733799961772588113940585157751524190533262394091745932641046693611264000
\,{x_{{6}}}^{67}+\\
49180910182259851588727130149233817252851530615710856300448164229057873608404771272526968737850176000
\,{x_{{6}}}^{66}-\\
46458838246393871693574869345586158471422065209303270911696345268890771337201589894328021757505497600
\,{x_{{6}}}^{65}+\\
43134972355331848138944108783221020809381262458884310419495192086629751470296379376274718216741116800
\,{x_{{6}}}^{64}-\\
39362439497633534511245212818869498004709129802059023254302574691891728808858041908647936337387500800
\,{x_{{6}}}^{63}+\\
35303193191972277228439777065137656416167494386497276870790422335802930644442653095711597112409271840
\,{x_{{6}}}^{62}-\\
31117160063399164337139437688865039829295017277594793219988925986005316978924338694150335736704040128
\,{x_{{6}}}^{61}+\\
26952596800815813143356092122470908475673026431828753639697781507002683163957207516646044377535302416
\,{x_{{6}}}^{60}-\\
22938385143521621229111024324925922016048441025457404563508543696129189690123364595841333795232375488
\,{x_{{6}}}^{59}+\\
19178716787417680585229756047009699670913747539285298670733754230262805598913938949076314126676954520
\,{x_{{6}}}^{58}-\\
15750326215528480002321380215653164362612161106504843101453491079248331235708830001179564074205178096
\,{x_{{6}}}^{57}+\\
12702155582893911210516581025463366185635794710186618524034771427770882482957248082633410219471019268
\,{x_{{6}}}^{56}-\\
10057112312755464433983066058320195619101802597197233470570592201692916613350578305131786195545412768
\,{x_{{6}}}^{55}+\\
7815426696830648061914604250266266973243034097843367116915760277296648396423251065688813292827296958
\,{x_{{6}}}^{54}-\\
5959041500741504739342676230879226001046706924977493457884142602249121692721907973267744556413515708
\,{x_{{6}}}^{53}+\\
4456465144171912344654772807258183612540227615811733523998081212234586349153413960140835167900125575
\,{x_{{6}}}^{52}-\\
3267582019130579458571931040636411368414210107466090031294076856865039471544160629816891300649138914
\,{x_{{6}}}^{51}+\\
2348019298622537219931845106040325224449376841127936407087717058427605648457057613218067167306832799
\,{x_{{6}}}^{50}-\\
1652797965624984944364734501531548328098700190410203729083639848818702551406718676413560442288648794
\,{x_{{6}}}^{49}+\\
1139126533749155188323113784381862130506344272474638522684929310780995606920131814752352645015571154
\,{x_{{6}}}^{48}-\\
768312629130087932486704909567133368795476393558489294068272136432855429858675357821521262760285172
\,{x_{{6}}}^{47}+\\
506859330039629945929036123576903762986470201996591911552482105191542415407606632542148413760601644
\,{x_{{6}}}^{46}-\\
326874849254477840682384310229946957489771134083669968516994727481563164015454638090887344294345670
\,{x_{{6}}}^{45}+\\
205955910185909378300314035545530956983528846439074840056626177783231992263223908507756385078838752
\,{x_{{6}}}^{44}-\\
126710981824123666170968229538116813326554434163226695218742665671948434421611801101640777890740102
\,{x_{{6}}}^{43}+\\
76075669993045192563804968832318949836021878712164829516745508159362085161521962927802256246872538
\,{x_{{6}}}^{42}-\\
44546123774203858562737203316507694133062777619959630084625346029037382741728458814705943928706820
\,{x_{{6}}}^{41}+\\
25424079948814984629207093374323401542188106890965494590130148254438572882740924871317462129769286
\,{x_{{6}}}^{40}-\\
14134754167722643633726939221604262601223529929465692488570987952206549624337306012138781911925014
\,{x_{{6}}}^{39}+\\
7650210550322154812189001607124054641751922374805051507835273092274564010735771078329022936177926
\,{x_{{6}}}^{38}-\\
4028400549539187018992894114891089589580150128650696195170202803559075981096739802818599329458446
\,{x_{{6}}}^{37}+\\
2062504690081829900018028989594844151118352103047456363540289957813121404803502918698507464289716
\,{x_{{6}}}^{36}-\\
1026089059219399650760857906769747803720756098122009477430762995721873270212987522789252292364364
\,{x_{{6}}}^{35}+\\
495703837079474212874671053140765282382801340577813553149110407378105516432073961287160046768876
\,{x_{{6}}}^{34}-\\
232390733552737045335629185654999832651320240145225223444250737827560540662735451238646960136598
\,{x_{{6}}}^{33}+\\
105652022802101393188141512641862236798001179702090179364971313726411114144364573456998404552476
\,{x_{{6}}}^{32}-\\
46546959217278241385295256857636422335592436377929517788382230391252368928319967388969151737398
\,{x_{{6}}}^{31}+\\
19858100541803106926904408901752742130340672229494971316383032353051521167461457189672834072094
\,{x_{{6}}}^{30}-\\
8197455866919393826435150321412801803120761170805735702648847245471335334010157085431416016164
\,{x_{{6}}}^{29}+\\
3271585422148232882984609479876199064691579238439876178430988984960889820781934191606548102735
\,{x_{{6}}}^{28}-\\
1261233266374010693827434138105796490286236015474740910991540133982120592512441565670641934156
\,{x_{{6}}}^{27}+\\
469228512106326617112460403969728143919350145959862950061782396864465244470750494493757108597
\,{x_{{6}}}^{26}-\\
168301715723186009564910019276060906015115581793854255120274409266120834901127339810646645928
\,{x_{{6}}}^{25}+\\
58134776061640584899879938134874407675849646364688326565254499838031295975064452921070505454
\,{x_{{6}}}^{24}-\\
19315871932373280332127853706359356286529593585317343480616017086638110758962679878721592032
\,{x_{{6}}}^{23}+\\
6165442090907758467003182162168410728084866700934563866146171038163427792394988732447742576
\,{x_{{6}}}^{22}-\\
1887871491252749071165094213916356664798522111340622135482992091045000367356910998396504640
\,{x_{{6}}}^{21}+\\
553685436076448230582095063647153018919049767435862585449491653881668083955425859890407200
\,{x_{{6}}}^{20}-\\
155270069022930306455064658339299828485707651572993944747264551460206031752289211982987200
\,{x_{{6}}}^{19}+\\
41554020009974686461242824300388850412981599048711760342154161186385016909464516477310800
\,{x_{{6}}}^{18}-\\
10590245914774140828048927818702238404535327456947357218449153450593999656631562693776000
\,{x_{{6}}}^{17}+\\
2563987767983678891215691704653502301548996316078479440915761179635744715385150414860000
\,{x_{{6}}}^{16}-\\
588100297307408593160912180295955225449377126353066214406197496763592683855730919488000
\,{x_{{6}}}^{15}+\\
127394819920587345997715014771148502743677895787382759422010047188016607123337846352000
\,{x_{{6}}}^{14}-\\
25968706971106515156435215854797406123655066708180261542604377597361944349585098624000
\,{x_{{6}}}^{13}+\\
4960507940027602159108171414133540542870357682494160638808628474544074120268857600000
\,{x_{{6}}}^{12}-\\
883568435934032491854600142821810738907025437706278188778615895234588463202734080000
\,{x_{{6}}}^{11}+\\
145897682005731748694196345430889511740879526344443757233716282574010926547568640000
\,{x_{{6}}}^{10}-\\
22175866222857978558060838480620319985670100890081201714818580060768067723673600000
\,{x_{{6}}}^{9}+\\
3075862118928405963111852344998231570651617836342389807394973285218950438502400000
\,{x_{{6}}}^{8}-\\
385105259410167229527440401287446716975632740993559048809336040524404994867200000
\,{x_{{6}}}^{7}+\\
42917339672778567692638781664562933815548814423377227738175562693341224960000000
\,{x_{{6}}}^{6}-\\
4178590536688350662228868634718089493254270404833603948496989437288710144000000
\,{x_{{6}}}^{5}+\\
346352374181513467040566508305589632427483211407599391709775625992142848000000
\,{x_{{6}}}^{4}-\\
23523257372893535305090108878056327495383233042527291749486130479759360000000
\,{x_{{6}}}^{3}+\\
1230919766784379333820283776146657594954326996169558341670191811788800000000
\,{x_{{6}}}^{2}-\\
44258346696601099692147834894822419343469825540627109426180915200000000000
\,x_{{6}}+\\
822759441845983293555551467431826001867074646280506498917335040000000000$
\normalsize\section{Appendix II}
We put $h(x_6)$ here as follows:\\
\tiny$h(x_6)=4483391625806314902278623053770051604405925444922144197732882524176570461483849883534229504
\,{x_{{6}}}^{114}\\-
205326408642245030427921175421329504773285549623799716231703635104414332571732690022581665792
\,{x_{{6}}}^{113}\\+
4748227343288912990296885343966373457069938283464960086266393172774393766368333307469432356864
\,{x_{{6}}}^{112}\\-
73857027765851476864907995448674285385990573072187964007497919962287095629599881653594965934080
\,{x_{{6}}}^{111}\\+
867664040149060106328280687969035749162428713361957811145995338185100965861669204561179292205056
\,{x_{{6}}}^{110}\\-
8190861944847112540343332023107832167424206756712692909839631199237350246201450512424812054315008
\,{x_{{6}}}^{109}\\+
64524201420812387655648394773450794368378133978768740061962179983314070957918412138538943926763520
\,{x_{{6}}}^{108}\\-
434712852282437200305047238826737804111974281774961322415630966710723431930513864104548900129472512
\,{x_{{6}}}^{107}\\+
2545970008909317278697022351939013486712452518283817781567560897727987929782362553710098265197772800
\,{x_{{6}}}^{106}\\-
13097028510927733556711072703012531491895193895358908087070737594767573219855012966233119733042905088
\,{x_{{6}}}^{105}\\+
59489010993776772645935266065220062725433224359410810434669261324137366389534849780109565418018963456
\,{x_{{6}}}^{104}\\-
238513405926928676903444856966833432897697263280851985483727297580965902485083469530724927053045432320
\,{x_{{6}}}^{103}\\+
837192767901400659327388305673834160162972688262551443375142899312686418524934321672790512987409481728
\,{x_{{6}}}^{102}\\-
2513099136708432936794293404949731029600341919853348088292159358568388896888677689779264842174440669184
\,{x_{{6}}}^{101}\\+
6053719990602192749861843286183348238861603167575816936543214471122284892335479278172815542584764006400
\,{x_{{6}}}^{100}\\-
9163876311642991055395998749631150596871330962512532720142436345754112557269606001994254021795015294976
\,{x_{{6}}}^{99}\\-
9024249443985225373330180204805081933528916042035915597129425510528112051248275814341201244956282322944
\,{x_{{6}}}^{98}\\+
142074934217896015771539864320501366171593067667530296183577155353072731641661116166010165465113457328128
\,{x_{{6}}}^{97}\\-
705841999687383448773452929656752250551625929461698468891233310881014725571830333537346557215306246258688
\,{x_{{6}}}^{96}\\+
2556963252800373545141667373003943578430092496177297523595625717533131530755386426887189567118496556384256
\,{x_{{6}}}^{95}\\-
7506000937205795890819942467284655011933626507544074468697398990596153361536969409902109321077199164407808
\,{x_{{6}}}^{94}\\+
17936874296373676114196887995833491364108238894730113646733427249384695776872725472056415982690935691542528
\,{x_{{6}}}^{93}\\-
31994491935711999747347405170149967952484298573433497955772482422845439648875035140492738459008304894967808
\,{x_{{6}}}^{92}\\+
23577072416713937683736594033035793565791544287767289977647307789915294750305008817333193966475822808170496
\,{x_{{6}}}^{91}\\+
118049483336789073704806907341232718697539837973168098200793109647025686330009533187642821903486004605485056
\,{x_{{6}}}^{90}\\-
734226593700496983659859704822162571968318879856660029116807065471505272263760698526703379649368421159665664
\,{x_{{6}}}^{89}\\+
2676100767811761358159438941280732258286679394686887068007192972643646754163029477957216740488747100107964416
\,{x_{{6}}}^{88}\\-
7662993760316617988424167386186942948965267517817521495920041905265592014721029928346467644366995617925300224
\,{x_{{6}}}^{87}\\+
18228878401501112954518648203563585882625088337782727970218694078395658528409378636434793400122030758366806016
\,{x_{{6}}}^{86}\\-
35497917132999416914255497739983952213960106444786351129608800669158645578268238705407671446370670634791337984
\,{x_{{6}}}^{85}\\+
50080604073204066831006607591646819608839953930614131764384353535761210999449666445853487954362644268536823808
\,{x_{{6}}}^{84}\\-
15720285635577805336432985158241452724031542073857594702537987971249459895284267203702419415647310137933168640
\,{x_{{6}}}^{83}\\-
213253922155301306434562560925404886975837336511684803944554016208255017287642490436704424404810363342977761280
\,{x_{{6}}}^{82}\\+
1007493043455866176533503224292999744184162166789427844011779822050534712435567593774778075593340183108260986880
\,{x_{{6}}}^{81}\\-
3171041860497603522808595547606604744997187422941016394874266237599677292995197145911648361293608523220490649600
\,{x_{{6}}}^{80}\\+
8192737511252059459769691589000259072047018568076663448429160778728067226324093815029594653616870160763925299200
\,{x_{{6}}}^{79}\\-
18305135177471481148456590646858825040465716298190833048619372186905924029582005984292092620693283163176802713600
\,{x_{{6}}}^{78}\\+
35624173877351729177444371747291195446773624694183712274885750085250404507310578866325771229695512146484731576320
\,{x_{{6}}}^{77}\\-
58582234594937660061939858792579634811160139147261998875044075344563326032022628102019721902715587786812839428096
\,{x_{{6}}}^{76}\\+
71819099318483048562659053709646755971169118190910895195097395111871506597473238712641461855192748334374136578048
\,{x_{{6}}}^{75}\\-
22093940243587190007287340760642189308946381991953536838792383303959476123643969362831745194258873365455764979712
\,{x_{{6}}}^{74}\\-
233025217903294298517141287464717280418226382056033072040305034274672769945242846989255002152281094724055960911872
\,{x_{{6}}}^{73}\\+
1025523298653789662058798736792859638602868572150673287005471885539852081213238769000767234977739727429349020794880
\,{x_{{6}}}^{72}\\-
3059734288811407594538221696447826114213564934841831098202348820839496834181221537195234953305884612856030091542528
\,{x_{{6}}}^{71}\\+
7724840108834128111122985253623743171066155405872104137836175939697010487564442474281477549811357135551193290637312
\,{x_{{6}}}^{70}\\-
17598555229915204052409275721605938532617305913212823977587619976905852836472667837758344972101004218032171429593088
\,{x_{{6}}}^{69}\\+
37214168808436892955139992394629913811032837847887399991546743647308501001802312702530950997073919761611388968124416
\,{x_{{6}}}^{68}\\-
74164349973524924164044748487950527176785143354532194711993428539554783092197599993287548621237068947503033156239360
\,{x_{{6}}}^{67}\\+
140599225063026401498793383882444543241421231293952449285214127461284194362568804729645646538437155823194867224264704
\,{x_{{6}}}^{66}\\-
255134929665028448462123935651676393218192518641856186992058689294677223050628809197576711116283639555716651134943232
\,{x_{{6}}}^{65}\\+
445115345117851916722473459965705479745480779085200616816510631773582234441390714046969812170649847704933366696257536
\,{x_{{6}}}^{64}\\-
749061719034523494343938879903371231292812041784789121517707667677755583798652397656826217325621436188989758776354816
\,{x_{{6}}}^{63}\\+
1219007780972759381537137388267547796560314187219082470186252125095981682570335264334237104391399268072460319401508864
\,{x_{{6}}}^{62}\\-
1922267833843986235022933568021913995489998232190319502655612143006018494863365132399299614081058546517837130018135296
\,{x_{{6}}}^{61}\\+
2942049957640812163906274015735702461296336164943877137584275801313056626469838939156398925995441529615603165326043200
\,{x_{{6}}}^{60}\\-
4376243911416590305490787122087133601237261355330019247902839015057483055317677721200907371667145695529192616110802304
\,{x_{{6}}}^{59}\\+
6333726504703618856653502310224966798566019634504342266079892781888679281304936160549551502977632695189789118671499520
\,{x_{{6}}}^{58}\\-
8927675097091887804745226828453944747104206326592990075536994371766100547562570977756777636313020791928669151806494976
\,{x_{{6}}}^{57}\\+
12265685956882439991845820057377301977701307980221174648635325190086145361805524317571531714569142062396466085421260528
\,{x_{{6}}}^{56}\\-
16436954639181339199725274838984659482882699396892670565503454008734824997698238810780224586923809935787495073755682048
\,{x_{{6}}}^{55}\\+
21497350258453638415397279374929990280857180860199552360851908786111509181565393506839861175772235405584492192924051136
\,{x_{{6}}}^{54}\\-
27453825635407692526495522227723181436782015685051421530035594447587824862989263800564974129586632784329227564794286480
\,{x_{{6}}}^{53}\\+
34250139598857372273893286430557004669085816584142659388356305799891035604666301778847885029162327546724314447540592476
\,{x_{{6}}}^{52}\\-
41756199327444679394343041648930798665430872130161334400565034986479139562256167975306892977457785457429926023677226648
\,{x_{{6}}}^{51}\\+
49763342019653154352562591967729846304052752675277957675977061260505683059276922971558559201463011741849416976582325488
\,{x_{{6}}}^{50}\\-
57987487462746366335335991718002373485838879355098968647341803895525756487993561779421879523681815240678851464094886056
\,{x_{{6}}}^{49}\\+
66081292616855572255171806158941767112015260139978995153901025159263362839587014955768679394364920959353261947428964737
\,{x_{{6}}}^{48}\\-
73655294673820815925288413980144343074212843219082144168473872042616657762760922761859179435971211674668985781534517500
\,{x_{{6}}}^{47}\\+
80306691545910150245367112937968208511672761873112402793431287741215694385854056693545710303318473848966530575545751580
\,{x_{{6}}}^{46}\\-
85653093862349827565580393514587961471802292127448490876587618595025473333644309850362532742862630521792781327819145020
\,{x_{{6}}}^{45}\\+
89367533191496865457171836824211611038413396796619272675309994300077047958693420361198462461574043052964279954786643780
\,{x_{{6}}}^{44}\\-
91210449257553934247002472035018984390462182691618369432805884278529261913185953934119479451380811117892501648130466020
\,{x_{{6}}}^{43}\\+
91054455426155996855357744213192997670275564845348405630767309072963233095896146453409671919815167821476644134852675444
\,{x_{{6}}}^{42}\\-
88898437541174711715857750586848730886368844816411220259916897384768072942558579930738004664931842901116211216035133972
\,{x_{{6}}}^{41}\\+
84868888741265046378290537973934804950595327870250375948494626877567996926661092217987732177349541474335293535257240726
\,{x_{{6}}}^{40}\\-
79208115369114766990089175674872559209294113338184568307577444706375155818086412275304344000524148250370963873485090316
\,{x_{{6}}}^{39}\\+
72250776886153494923569109186657985156939727734775421593425799130418669640659884616431522278010448820239559548579599460
\,{x_{{6}}}^{38}\\-
64391829980467870144752860336339707615215827827081427038901326665274904763149563765091447902175939632575049165759240844
\,{x_{{6}}}^{37}\\+
56050056059301403897498004796897349226648237681855325925861150312204486441739258979455207815222504548273765606248331496
\,{x_{{6}}}^{36}\\-
47631779511787796571969634019449416455725165799777979791064852380476760453638349613177170209221510076284992362837866884
\,{x_{{6}}}^{35}\\+
39499080619739491579277073599405766205548262015368623778229693050332744389406034558016435258661721158636673516939597564
\,{x_{{6}}}^{34}\\-
31945859788755370092456761074054472831651732972799157275898200584015213478472443103682886677025951103427156238146420388
\,{x_{{6}}}^{33}\\+
25183722561334978336454160054965989376608115536889702639786203461738916756192687653176293304462703554397983017958570457
\,{x_{{6}}}^{32}\\-
19338099618373722669059669684408378980599446416023939193013170475797613589593234875606362932851846780945957933674850408
\,{x_{{6}}}^{31}\\+
14453573286857473064473938716154482163802115435134318539543214024100524130195352073381460755410255125893797529219895456
\,{x_{{6}}}^{30}\\-
10506283893683950982237258985379197783560704043783843231538653100486294357045515762738566190667231697911368567221530200
\,{x_{{6}}}^{29}\\+
7420676999572068797167588141139250346102101153695723803051284950741365894561917099189383422998601170039729736052998528
\,{x_{{6}}}^{28}\\-
5087758778644159009567284077404521921647088174034583785106555486576588627972776012025436255495025920499344227899039344
\,{x_{{6}}}^{27}\\+
3382383075548043015684431304657228198049197356435193103644820049023079706343604156193866348986611530702250237289093120
\,{x_{{6}}}^{26}\\-
2177758814225798082716970231656255637946844467794189997304066903819557627554011437473728015218092776210095863631577936
\,{x_{{6}}}^{25}\\+
1356166832243486057136360018201916252457358988130700500991581357957438071388815746386879543929710277448847638477085960
\,{x_{{6}}}^{24}\\-
815645666020237828883820614975859631653770088733044655641730207118001267968093449599423029741642182490137553503053984
\,{x_{{6}}}^{23}\\+
473021446550315591942800621896765768992471722688223069494892337368139735748855607037515729677471830411607294688173696
\,{x_{{6}}}^{22}\\-
264050411498286501527450861864478799401697473834016975193519596212174085659602735695340944356689801385833734708694944
\,{x_{{6}}}^{21}\\+
141604493847024673563277897583433233231512714531396898135844096095557682072804603943045270097096431539411007607985856
\,{x_{{6}}}^{20}\\-
72798151358188646984916605951368515149104134569690822065973025532312726888482172688778782109993500493509669967860224
\,{x_{{6}}}^{19}\\+
35791915960104934870095512437794470624977306091544210630932214416509751900942559643587069631054972130794391619136832
\,{x_{{6}}}^{18}\\-
16785065526227397551430648223230953785768377099815666232468321802711730642283913098885866848845514746866586876890112
\,{x_{{6}}}^{17}\\+
7486039386167758165498756320666538440668058449690300911364834080457175291113335212717971648404758925373300286691344
\,{x_{{6}}}^{16}\\-
3164690591337089269122651792673430984332571623895791846860594990256707408956006069818488869855195323713547291274240
\,{x_{{6}}}^{15}\\+
1263371862412637336118077448284129277800934557049604208353286016322956282676072492375524618321518497537179081405440
\,{x_{{6}}}^{14}\\-
474237174459865673622950650903008236257362680808571814423376106553991218384112852650618724470295923203219808460800
\,{x_{{6}}}^{13}\\+
166567588485401531601351575658375428881889405229134541528417132083554220056821494460709982665900520238239579509760
\,{x_{{6}}}^{12}\\-
54429856659678273150275880156007449410860423905145212704433824330499298121721064388192700925034251046304478330880
\,{x_{{6}}}^{11}\\+
16436942247144086690315529389700884710622790625823197634388451443696054490105773730764452786436016477036778127360
\,{x_{{6}}}^{10}\\-
4550559903116335272190334356678067567300330339887054893274986715736952210291974416159798729836648828774753239040
\,{x_{{6}}}^{9}\\+
1143794330365551469610644486847760213393907293445506682022696752757580949711190716433894667610583913035584634880
\,{x_{{6}}}^{8}\\-
257892406115483250820149690568604951656065829658549404453378180558623971925103779553458445170103335234043904000
\,{x_{{6}}}^{7}\\+
51366626789697031994971660978007265502691340725200934436341529069242803083658647745944596999843687882240819200
\,{x_{{6}}}^{6}\\-
8857785333824662183640035325209323345883967696840724309403070568683468357320498685584956576771894509410713600
\,{x_{{6}}}^{5}\\+
1286350853431778836288662240252507434753495656746971709685224196828127797048648651729278000729076056824217600
\,{x_{{6}}}^{4}\\-
151108572289028044685257699340398716743469878173371315869891303511532560180920169534954354971901547669094400
\,{x_{{6}}}^{3}\\+
13468295516322163243823865461588689875067172048266895252627491145464137024105265628046711165122125391462400
\,{x_{{6}}}^{2}\\-
809955593679404419990999198050851090431254722130327121234710273131829159152130816806933719380294578995200
\,x_{{6}}\\+
24652795478782568705193226321305604963662010504130505462968280403875490149216360605689272167309928038400$

\section{Appendix III}
We put $h(x_5)$ here as follows:\\
\tiny$128347343962975370412693426749009849339839357016801236766964799153788520545469283287700441036952900345128917401600000000\\
\,{x_{{5}}}^{178}-\\
5530540078084901614541347163272585556710461227261360504342982180428179351592239804543551577526948963951376020275200000000\\
\,{x_{{5}}}^{177}+\\
1214368671181023056340825625079201542939242778107087349459445314331171252355905862118372161779163451531290451629834240000\\00
\,{x_{{5}}}^{176}-\\
1812431994310165286339190290550274741253258982737776277937193581998625964287755164528412074760604334773217549337021644800\\000
\,{x_{{5}}}^{175}+\\
2068707462078478021064883505615211507123991283159373507839725373497169580163620572530599699435828318654118957649735188480\\0000
\,{x_{{5}}}^{174}-\\
1925873300646058612284297034856109247401795114532998418465537834769953774453717368954658193957404620362604996521357381468\\16000
\,{x_{{5}}}^{173}+\\
1522757295376626038741802388644779881996787547117412902238876564126995898728424975593066546705697211558618302730037040565\\452800
\,{x_{{5}}}^{172}-\\
1051322757936520155495220079568528552005553685193943817063614397077959353964356464752768181592436787886207528323855554564\\8496640
\,{x_{{5}}}^{171}+\\
6466180208832624164623954167835744881120569147912311659285213246326350815257644212205523180135164250850094582806452137537\\9881984
\,{x_{{5}}}^{170}-\\
3596848876442193547475956779805286385701797301462769959601207412927544020389729830738737743522457707905475592562932467680\\57942016
\,{x_{{5}}}^{169}+\\
1830830541452091752073244673215535267393611413556147916846999545960035711373596330479080243786654533431614760172044776515\\267723264
\,{x_{{5}}}^{168}-\\
8607332481708493271697961883983485815650906558642787236387410082195867547956110322590237108086473316084344853203251259425\\888927744
\,{x_{{5}}}^{167}+\\
3765785531756162462427350264287410124363832376273812417161365892528408491688570163592369263093746241632193297632948022624\\0175538176
\,{x_{{5}}}^{166}-\\
1542758760588866064808912185086604663134560725127199843177604916390452553607595186339595386002418176472602234569399028917\\30183127040
\,{x_{{5}}}^{165}+\\
5948861302729476122393831944548721298294439852821189927023221924120621699253980927796068501984745210247055008344663587788\\42761396224
\,{x_{{5}}}^{164}-\\
2168433620503978001015246311858351279579697918033759070972940543387586412850181135391452008869571808403033665890101664503\\880721891328
\,{x_{{5}}}^{163}+\\
7499537721285873739505304287210342067791732541484579259340739857323203882008983520798041021251358693804347870758083031024\\468473413632
\,{x_{{5}}}^{162}-\\
2468704930469775775549100387314406027416273460312995966687429927495116482035439308730930902233085159700116864761193703030\\6596946706432
\,{x_{{5}}}^{161}+\\
7755911459172691169162387197427066684138277328892163727843824411761749593503084147541538475456398973104702372303567199338\\1088770654208
\,{x_{{5}}}^{160}-\\
2331058312235712666892225448509846596174222520233966816340610963617511249198797415785868029717774184443317302451929362040\\81047705485312
\,{x_{{5}}}^{159}+\\
6716277499879468660401153158232638278819814773807434406849956148142762041754503108242985523991280790795627581337968578165\\36286227333120
\,{x_{{5}}}^{158}-\\
1858444060643361072383491574826396089107694238070005703922032639993004377971311240260904533619650739293066599774730420765\\791351805575168
\,{x_{{5}}}^{157}+\\
4946684911247295558626514887387912259913504869666357827546724104367661923235136463821908209878377733679310763952445549592\\908904016248832
\,{x_{{5}}}^{156}-\\
1268362635471390146625495033502720783248432474716429078211589854255894741904562053987685615084639680167706889959649283875\\6448960051412992
\,{x_{{5}}}^{155}+\\
3136829044349927529309080246078610614695759437088711569871550101091262533047682613429105463982477859241444089535123758134\\7039301509414912
\,{x_{{5}}}^{154}-\\
7491223764029709356811637399377955598275358997168945003640574807572416180566257375258081450158359154489493889604479591633\\1762994599493632
\,{x_{{5}}}^{153}+\\
1729321274353975460068086313299385761058347135937046518707069672519313793171224006543407273167152230907451040387888013318\\93454464611385344
\,{x_{{5}}}^{152}-\\
3862448271322219100148003314179251789945179543745535319206591457036005905551041782644218932825546267455936900955711508691\\93583252475281408
\,{x_{{5}}}^{151}+\\
8353697560808298624818573838240776081557945392632408597890113984450485534542780037572958427247794539389093564782532916911\\25573957343313920
\,{x_{{5}}}^{150}-\\
1750875437025128396945875516893813752771485565823419734087812202563484252488690007146280268562438058613572858052007235138\\002290270874894336
\,{x_{{5}}}^{149}+\\
3558717741163152898610084645653818401412789520711877110687028727558579770637872151654439202485152044014730072561485319944\\228895779548413952
\,{x_{{5}}}^{148}-\\
7018883023774221747505869337866007795290076146413652181055494807400316300676735421670090898086016716547456109999916206449\\519670646700572672
\,{x_{{5}}}^{147}+\\
1344090326577454676269080536638959740286362021751705582475905543979195627608206166108067704687629566692088196824886282111\\2215871390889988096
\,{x_{{5}}}^{146}-\\
2500357340996334350493014184080272021854591086360947102851033455325624121591770297969978787420202499919740127471441716297\\0442944448491134976
\,{x_{{5}}}^{145}+\\
4520587476454335797927157969001199900667087694446069724497212679260433239135486965740591399930085092271431240280202006273\\9314054267520240640
\,{x_{{5}}}^{144}-\\
7946807071999575205546372233951826638836565949636313435627902629207612818360294911794695283214829026118001658227256208134\\5749785210716211200
\,{x_{{5}}}^{143}+\\
1358819274464904722904368996520264658037602452075211613568903004595164450690664725365245465720025382923226535487437168271\\01277137327336608768
\,{x_{{5}}}^{142}-\\
2260730501023715221402951820117237525988685309020907058903498223984445669060339191697014056761522313232661929997845934749\\92095051616500285440
\,{x_{{5}}}^{141}+\\
3660842969126465175883093795434890919005846002838643575918432039468292728514894869069401979867167460458228431903630436903\\41034489112715775488
\,{x_{{5}}}^{140}-\\
57711894360699221710021632077546747141049919132051451118883569811576518841725496626317030674140166222839953247106293261686\\6722439728606024704
\,{x_{{5}}}^{139}+\\
88590360337026478263637661218289704622147477120797995792259616642457384487164259362378208804951177351417626524836035815855\\3596241524859969856
\,{x_{{5}}}^{138}-\\
13243593542905229239104932446222722176189832466152708436166911969003720588150326322038614993840850184087510296807151247921\\96657926110337094016
\,{x_{{5}}}^{137}+\\
19282281473410535418050532777533532869464145616742040289190223888954237651154333070833464824361114582899354355893859852910\\04765281444161453376
\,{x_{{5}}}^{136}-\\
27343184778566301245697638082020449115287839566295218491265463524964789763744342140049519481489031047423991498340090089037\\38220390470222017280
\,{x_{{5}}}^{135}+\\
37761371663291914344043172602728576069421507588325510595315115650429218955393674783844925170536510050310104954088789332000\\27289534047414741248
\,{x_{{5}}}^{134}-\\
50778818993359011070181288002348728645172153207204768262401563804792898691324670301084026269897601577889867509279571581822\\80430915966281621888
\,{x_{{5}}}^{133}+\\
66470826431264506725268252441958349989067586344273044684172374564969673914111921389593200972749451069705616951995554397162\\21552032257127773000
\,{x_{{5}}}^{132}-\\
84665730446582041918171153590547170741297717467316880323800825659789885267675243367605866820091964253027116271823832882947\\75015222009735128560
\,{x_{{5}}}^{131}+\\
10486888119163061675447751156948857121859199963141219964176558780631056040684574668787571259362320276852048973761325818054\\937135826435624366351
\,{x_{{5}}}^{130}-\\
12620536013278324560228947084595780237143275014528607919005920036866493148030308250538675171805810291495028831899621383322\\298441867693259250546
\,{x_{{5}}}^{129}+\\
14739776737863001042567666839590226112058259549544710673907166304300126061157057052491061865523699228635651853581189588696\\247164325419697733428
\,{x_{{5}}}^{128}-\\
16679460369617362662719340877280409880674634391915309198415911964751127421921600749464236131604806858424593842748837891846\\739757489894904575926
\,{x_{{5}}}^{127}+\\
18246061855069356262774427930563565665891620910923173909675603823473951667167929641807805709338972897645339868438683701760\\461288425937234279024
\,{x_{{5}}}^{126}-\\
19233285570060442488386914937827877833117782052023146760898126187418807939640764053489420635252356121614705421200423646460\\481711509959509996810
\,{x_{{5}}}^{125}+\\
19443561542206869977818115693871312507881639500992954223365519335213019660010193595130336288020823745865621810816118340474\\023713736998330239034
\,{x_{{5}}}^{124}-\\
18713558278608281062081771967599945053247549774478586447692193518839999432427875633869392221142542916733641040206241694047\\577434653558675107746
\,{x_{{5}}}^{123}+\\
16940702713246375549201339934890455814313161656505614285438423919187724969115857333000689856362928884964018194309672467846\\101734015661861925761
\,{x_{{5}}}^{122}-\\
14106868018312175283411426758066900824971466460722743232145611371702530230608598928878075127752965426412640251894401792077\\955028894600122973020
\,{x_{{5}}}^{121}+\\
10295093031677265847549491836410135945032518966928003240117761989185601767384020728189302501462292684543984779278993192166\\480607950060017596026
\,{x_{{5}}}^{120}-\\
56955931757311713250542011136580134827210012000416028674940282054640428556099691079062154130308932174574948553626237631614\\79145129817542588416
\,{x_{{5}}}^{119}+\\
59845702255823773594572639594670346834536718872857896346986903258304795382857646805219789178415115069538483599936926543953\\5413981694461515732
\,{x_{{5}}}^{118}+\\
46278038460576254401890695841715450584245508027898552895811009196131523939137191042432623029599335802073326694035221787070\\30205932996838300968
\,{x_{{5}}}^{117}-\\
95705144423768811129922653417032357524282570334568308511775658040090103089348805301978632738388114618292528003582343002110\\10561199525416340128
\,{x_{{5}}}^{116}+\\
13816357541942362853812005959596217263811954198763888031918529922869020142707740777124766845337383030929312305375453019848\\859819280608263097912
\,{x_{{5}}}^{115}-\\
16998762591513613402205589295835895767820651652591273663955679899317436248352168540147170954921068343887990626294466265620\\529906129147460870106
\,{x_{{5}}}^{114}+\\
18842664725874904761582680706937296082479141662342395954790075813878725006427224089402377457362416718305196830885963529603\\974259123094715664788
\,{x_{{5}}}^{113}-\\
19200042801081826463945410967341636351725716070301634363644751842135677495525102964833404254403218149575076166154487886901\\083456510120496325024
\,{x_{{5}}}^{112}+\\
18070657052022404756902351480006894645422849368713506412749886434655518138174980842745337661934344467314412302415031328692\\627546584176909135628
\,{x_{{5}}}^{111}-\\
15604413386847734385815400120003927902497985226759843334259915008135757467031268664231870103696132891900831564223206756506\\497003192621371827344
\,{x_{{5}}}^{110}+\\
12084476395928257126009035173251416686493471710527219618463993257752279283341616182259877513798705304117531535646864445617\\965407465906487555092
\,{x_{{5}}}^{109}-\\
78931866904733395034289256132372966791572253304031661980346275203024602129850729610902464030334058964117115195632584864941\\27971601291451064660
\,{x_{{5}}}^{108}+\\
34654996331738116300513568927403911926897583255662792934426368710859590885641946318677096585786926931022969967024549321238\\38414077492176951172
\,{x_{{5}}}^{107}+\\
76342008137265855352457154173381913204276007231457903654058261392175970556076497411261325380320027940810270982640154787009\\6967821982063779234
\,{x_{{5}}}^{106}-\\
44089803863368391781962443736090686629848075360746732521233299213774457394769510576494086504194180112749229865300915479725\\31039930948862934080
\,{x_{{5}}}^{105}+\\
71792449556531971953932454872195603675644492773984566939341763623444370920314174139255900502559173080009628439424070427939\\05460470523437518140
\,{x_{{5}}}^{104}-\\
89027748353493143968044907141596602812663575433727226473159497890050299318071488925474014828449673921704912480683055317484\\90006229283334704360
\,{x_{{5}}}^{103}+\\
95390674243254028951416387101583884086507501671268109816504623055013937666607034381708557029350366091476713277207949663517\\75487499094709589528
\,{x_{{5}}}^{102}-\\
91712234227698594536947570100774813311059646124353327376104771944853102915172522736039016832906690161497750518546322055944\\02097572116070256360
\,{x_{{5}}}^{101}+\\
79831777547393387233398507707238880663607776647088597506503954257562846903175308829998405072345711476181455288410415947100\\68921223755775729952
\,{x_{{5}}}^{100}-\\
62259913093396359529440393894415639279058410216823142055147288941120570247242267720798257239989029000507597202679444889292\\84093955133745238552
\,{x_{{5}}}^{99}+\\
41789821240458939607128369026605760499680705283128064529550596100332375129726584250761953092423420196295474837565550618296\\69044944383370612043
\,{x_{{5}}}^{98}-\\
21117226671067749526925361477915031392700848141975664954737679906141142109136681558694635419241480249635576154592696844169\\41442053607155074002
\,{x_{{5}}}^{97}+\\
25218654402723218219412784265723850953865720122335202548657396853917596689033709014977411645525251066039904060353323050936\\5241977745737342108
\,{x_{{5}}}^{96}+\\
12351901037265407518282511061924740081620887845329673297102975249899282902269201828534732039482980880851177263059482460946\\46808832942615408698
\,{x_{{5}}}^{95}-\\
22597244646886550293078056370393329489404383276336677456171260270575647121965608337549367316252638863990999613113150114620\\72671019300635050048
\,{x_{{5}}}^{94}+\\
28031880378023975171918667387500994519280618075336086932158347006240582466329666264537234857543990217318762234697012190043\\18019354780247752102
\,{x_{{5}}}^{93}-\\
29089255504626968924361800462538757287633097787777324974718727706835499245661123896717969138624170884825368135175763189069\\73135074449468395158
\,{x_{{5}}}^{92}+\\
26645021033609399461262534085181854856939338699023228792812734128507042303568792307871574093207225493150390219840028345114\\74969500545023873550
\,{x_{{5}}}^{91}-\\
21814922959691964094476059682501210954703338243601457652314421767688853540495125406400582874348545913416657521694300262271\\13470519156640604723
\,{x_{{5}}}^{90}+\\
15758654837135889183460680619241121345718529494988694667242400885689975561363363561580029598485923625345885310278805771313\\29351010801960397708
\,{x_{{5}}}^{89}-\\
95172661954258507553271371937762198333761721598902982633603037568503484657512654656156066079406907401567336725746575017299\\4856172321973028678
\,{x_{{5}}}^{88}+\\
39014686092257133831443531466603091263676174943552517365091898167412001742049805163875032554485238175235423078617584040535\\8712766643471427288
\,{x_{{5}}}^{87}+\\
56306353613503845332057336721374540379515945535305732546721819714852460457836450478125330158592696380190021620866439445174\\028209651652318772
\,{x_{{5}}}^{86}-\\
36374308180934624338524000268106860653494568971058062405309950289805717416519666771402797225714893990126502337369595545627\\6055560680623137904
\,{x_{{5}}}^{85}+\\
53297956771021117315491231524854588607242419943130087864632930472382619648513793937279756303165103303724757897883570829598\\7475117960798034280
\,{x_{{5}}}^{84}-\\
58297741240687620422289275345171965040842707518430485059229706416906860879716902929719624361568531406401401132718540294171\\9154090756673793056
\,{x_{{5}}}^{83}+\\
54330903210510886919361286368047497716299260441536648053969514497720268765137053322555373482108793150393869774749051953152\\0530201751173553744
\,{x_{{5}}}^{82}-\\
44704275809797281898701588342830510390573895877345201155814289370013643210189774929323922082067974186887763600200081515869\\7080190365055345344
\,{x_{{5}}}^{81}+\\
32503608535394811085953898691740466307592868141835482024433247841500930421182775408446308972220230791884217504329273145047\\1599173728260916256
\,{x_{{5}}}^{80}-\\
20215044023687390802933425302157410839300360854477633793570033856904646215630026149855169020769412283681057493550633190564\\9518775406220896384
\,{x_{{5}}}^{79}+\\
95455511760176906919642371018800777844928981892707676741336945239143854492767770189799804927761612219179909567218538004185\\721626064469287232
\,{x_{{5}}}^{78}-\\
14147289720312394799626262849538252360861445526076119383146324714652244427564337843245262917773455576022560697589042884454\\464859890845895424
\,{x_{{5}}}^{77}-\\
39302651966794739241441801902560933280468734145339788320977576917795749884335851551263189459606607301337904666767757125978\\267514864559508352
\,{x_{{5}}}^{76}+\\
67316792605993791532762566613828344487040070952746968642533721000873868547897567812986029980401576153885902870070627992555\\092522387884253696
\,{x_{{5}}}^{75}-\\
75203875030209922332892102625703996246116196358548901487920954173893395949417607502630035981178016934328943997944060685041\\395830099475730688
\,{x_{{5}}}^{74}+\\
69356238253577045740751896975855014387640153539352455575996756589040915266505150451115111665471412061460145771548352734492\\425874846575889408
\,{x_{{5}}}^{73}-\\
55882145172451240580650159385164235785228893390588449769171108831427098245073648575921095037230474829566721300916894295493\\703778969485416960
\,{x_{{5}}}^{72}+\\
39755816674798794007370663733978784382203908845747672022768045381155025481228631577698942706829288156509263124975455285819\\579519137495484416
\,{x_{{5}}}^{71}-\\
24451718575218720970468823294677958604313573707162914153481709414216683555862120054548038297516111048610601728370244621651\\212620833740092416
\,{x_{{5}}}^{70}+\\
11956562829943089240709532476926166890966612723278822428118425819851692352915774525692323779136753502937127680835171950829\\831879231882358784
\,{x_{{5}}}^{69}-\\
30226179685352039524800895955345518361428984559404018686106510870471446895431897107265209451384703040089178242959815118647\\16436962424064000
\,{x_{{5}}}^{68}-\\
24690249453372989486548988650013722604899775366506767841752505768075950190862100587797874419081111566489917193304089340530\\75924353398038528
\,{x_{{5}}}^{67}+\\
51391162600474090628179820949437302702433716303933541947640669239446112757970577632494101706386877398251723711426473015455\\92070388948152320
\,{x_{{5}}}^{66}-\\
58016933737449570249418245581618155774558431581978900974847618045478308614747170956293182772236500164744849510928662342594\\25447096176164864
\,{x_{{5}}}^{65}+\\
52470207983394212985532467799261748523998335482054751876048963426595198925301259805545847617844690549121399379171943055954\\80992515734233088
\,{x_{{5}}}^{64}-\\
41181913824363202156024495448388860536445289957372511185158547964785391835612056853597056909058460857622368249998517670677\\79205471540314112
\,{x_{{5}}}^{63}+\\
28654171224447462196366498412975425948857786241381232214366058391961459443574050054046213103309661027283066591164906676834\\58750069620195328
\,{x_{{5}}}^{62}-\\
17539935888893859800208050537393306053478743006695073079714961776584629936856395525152189863849060518976797817425307957491\\90097221369921536
\,{x_{{5}}}^{61}+\\
90170636498893406824621487169529644598554287065722061778535708629457657462303066216592842316306489740001611819155809947780\\5445026163949568
\,{x_{{5}}}^{60}-\\
32603661531740924584844086706839266469149059970378843940642961411723253467203426415115021686700741917193295500308066940884\\1522536068415488
\,{x_{{5}}}^{59}-\\
11936558749239290836832309161032373802380504842588523102621146188184590128081771504527234555171543128611069846184140813371\\994814585241600
\,{x_{{5}}}^{58}+\\
17367788399603318397345516421677058663345724218952740860889840237862455416104920898634010668268825804035383463218018712005\\1297876981579776
\,{x_{{5}}}^{57}-\\
22116933081401017809909955289146481313870821367815606692489632240257072881381344071353955714357189678764343537608818241583\\9343764015939584
\,{x_{{5}}}^{56}+\\
20537730348353175922642973998378638386027232303528240034100887500777897088339400802178768382455131879787194540315140326183\\6715143544700928
\,{x_{{5}}}^{55}-\\
16242888037740361052528503359930824371322590988352274414172280309236828717087724837144652333925765454150079341617874649791\\8662720286097408
\,{x_{{5}}}^{54}+\\
114497696417758830197884111014698275137907535546875784137114831967225750146579289875639311614494765405835224440977096348370\\018726215417856
\,{x_{{5}}}^{53}-\\
729014545543073753790652421369886470888781883047576051376117884950795563340996027921182895994576279970678981168896388322476\\21086697488384
\,{x_{{5}}}^{52}+\\
416913257353352701963521372823394455738834133092343639285187122160809371135395447116412349334667379719063547121602154863733\\77831168114688
\,{x_{{5}}}^{51}-\\
207711649274708177123401975289823487557380477178611698044681330049965675246332266068387382905655874262667655727588838695814\\06138364067840
\,{x_{{5}}}^{50}+\\
816854024833697254047187243814337994162598294423541545890172062484427295343707105226053583362081314562481035222874332853256\\1279110348800
\,{x_{{5}}}^{49}-\\
144902380859816206608390794831745077273651088901696066845363525363734998515480962817310030216670490686027871363219912680451\\9523866116096
\,{x_{{5}}}^{48}-\\
155294518679120509418701272345361442131232284506148369824023354394210186222397898249937716080135372521447976527531641259161\\0529404944384
\,{x_{{5}}}^{47}+\\
246327392876394518418207281982112962012310972779666085586573466205858045853257705081642517716622737771701145414666936967327\\0877159948288
\,{x_{{5}}}^{46}-\\
235861637461994723114858865845752407871264144004202233092449567274236237832118393570438936089077517372625163144258722869296\\4070228230144
\,{x_{{5}}}^{45}+\\
187333592910790309558745043476603327571926520280422505218976817041003091498588933060037429602469331176392729457559868920025\\3003504812032
\,{x_{{5}}}^{44}-\\
133441330636232855439063727913384242514777704174868004324740215758834128192380876932023977918212310664367343831200293350654\\8313157009408
\,{x_{{5}}}^{43}+\\
879939050015245402885487709871665292173388006220640074169754233172423958808989610875007232500580654624855569083619929423061\\464777752576
\,{x_{{5}}}^{42}-\\
545871352085453629145805969945989221505151324342931115292760961248386185520016634954290794646935576612407195912199375452072\\008476524544
\,{x_{{5}}}^{41}+\\
321539361809710217991965283618576504821314970135012052850959973697973432905823889804387799747203933936974404799154501801235\\565214957568
\,{x_{{5}}}^{40}-\\
180886319908987016805945966138642725696390791099428116350016486118298447380423873998403451084902107355251191742163898327194\\352031367168
\,{x_{{5}}}^{39}+\\
975591535677821713968581599540664091395578426591479240443201789011632520044638702182797490480132976057131516041547334603979\\19085527040
\,{x_{{5}}}^{38}-\\
505771062475735734546421248450914914769317392653165401507004897347248621285361619386861242551398569264824052603616415068700\\15194824704
\,{x_{{5}}}^{37}+\\
252491841888224928759870248139713509284565485598208706035503639179459074390425142437761999325069058755914017519845822237389\\30798264320
\,{x_{{5}}}^{36}-\\
121531555111204348187289925549538244593267043921539071732513184654057994565662200881721531737013789979276751326227796956990\\78523387904
\,{x_{{5}}}^{35}+\\
564472593274100680600148912652239028720020979640638052759402890006138135299939335815204919186461581503379024157095331210078\\8278919168
\,{x_{{5}}}^{34}-\\
253126487273471433364948693115559920574526815925215831578482781218900380864004328804719564096780464499776632622378966261176\\4875886592
\,{x_{{5}}}^{33}+\\
109621314158756279378197806414111283700938814106296707538719647548646472860482993785928951975224278513883142785815248134972\\9463042048
\,{x_{{5}}}^{32}-\\
4585096771161110970920487751193294646122645548961171185426818258611867615855062369619815031522862463686011697365569566376356\\85646336
\,{x_{{5}}}^{31}+\\
185206322387095209162134553638724181255063099693953347166513428516243205486880448492237894644849278724477797771619439736452\\403953664
\,{x_{{5}}}^{30}-\\
722282917313616580717016568145931274352825287602271305037529260205924907179318739751751977755696269612161967084692046690788\\82484224
\,{x_{{5}}}^{29}+\\
271851440255818060509335490529765189991654424752954306581208549856161217808095244129356303624892099807183160403117357912246\\90311168
\,{x_{{5}}}^{28}-\\
986957294587225067939468327940238019061084871030747194704248509002668589811357593311246913911366435897988275414621217940493\\3562368
\,{x_{{5}}}^{27}+\\
345397762963202387374064203242664310678712047222430316372625747429612451883616916424192412939624916317705178923902677734880\\4444160
\,{x_{{5}}}^{26}-\\
116425760423306465214911023686588832585931810791078378406770158253540037018508994444858400360221965940799287682361320095740\\4348416
\,{x_{{5}}}^{25}+\\
377642767236105053773739732119515032245566246383740440081945941862204403899387217417962246137784628667728143242559640823257\\890816
\,{x_{{5}}}^{24}-\\
117746194354247061813124832871526951862062016052302572623527888730558772370864326119937274147018027728377536767742531764934\\934528
\,{x_{{5}}}^{23}+\\
352459870759017596756740697434679649855872761844905021374167118499217354362152667054855737000124806799768236468068217344218\\89024
\,{x_{{5}}}^{22}-\\
101148462211640964398258040702198264139855120333737285521450150284231522877704654697828850473172705829129258172716088307335\\82336
\,{x_{{5}}}^{21}+\\
277845776669249670026113592719534200179051070301050586342016529351199814983917459135671441046941536923534166276679528022540\\2880
\,{x_{{5}}}^{20}-\\
729222059942677239194861472602631970992826200048350708453908761070046160711682680923725375727555598800883196185895432265662\\464
\,{x_{{5}}}^{19}+\\
182490369232807069021403284819016676754281859773867649736818460685067399592662641294817600630086707838402766296441916835758\\080
\,{x_{{5}}}^{18}-\\
434447774417556252713537169449578275615658559171010875350475357432739654092695255407634470093845253375407341706207100319825\\92
\,{x_{{5}}}^{17}+\\
981312487711337377849734520545821034395980798786557655574632650995129713487289075138443547286859623905492491939765776338124\\8
\,{x_{{5}}}^{16}-\\
209672188120683804006152344764047079267202374011796846480825399560699264551760692952860839830270392859460630702219370640179\\2
\,{x_{{5}}}^{15}+\\
422314275671514886321123779568396098193816231242663172075394857732652916567647564791990662433359094935353521403228822962176\\
\,{x_{{5}}}^{14}-\\
79864922656123001672373003402242843914722460212594718228143218584471442458879920199517152848106944554516039278408268513280\\
\,{x_{{5}}}^{13}+\\
14114974194660343006125317651559136005123731524715696467962306387120914690527501028427382041548468495779517799388749496320\\
\,{x_{{5}}}^{12}-\\
2318631152616942863910854119763409708464510724161955816200337405320112889645914590041456718031593814708975738270882201600\\
\,{x_{{5}}}^{11}+\\
351711906340592001610413792726774343523694998021397689259098322027975998519200903802531516313342353703169108580342169600\\
\,{x_{{5}}}^{10}-\\
48881654482529098774644700465700353976861287780733065236104378937903373457008646072769666061615264294470604491325440000\\
\,{x_{{5}}}^{9}+\\
6165199924406916311726448672009618574787780580033514548805089543085277996422090595590674195421977333160669442211840000\\
\,{x_{{5}}}^{8}-\\
697265508611783523063822245221103038364102732880164181773707035276354526553076359731686155685276694260890153779200000\\
\,{x_{{5}}}^{7}+\\
69639850156490923537983768960854449000882112021803741111317445152925716960736160612227466714320924661507948544000000
\,{x_{{5}}}^{6}-\\
6019516033064772902099011570341461065066869308802412228392754929923286004456239967801891827762038101078179840000000
\,{x_{{5}}}^{5}+\\
437977099237740958620049330989996973827136218815395628707280026388262269517157957648785965624334795512217600000000
\,{x_{{5}}}^{4}-\\
25760391661695283164506462715706115713693855148545427268637078512865404994468648889469516554411358289920000000000
\,{x_{{5}}}^{3}+\\
1148563712556502089867697244210114379584113909091646727528636419580863895074813100054898930060623872000000000000
\,{x_{{5}}}^{2}-\\
34512293527648499324118594567227987096710626145681634865537971099433193107503302883739921127833600000000000000
\,x_{{5}}+\\
524157766562226639572627086349581115653126065201703317806174894734427971871427112480895139840000000000000000$

\normalsize\section{Appendix IV}
We put $h(x_5)$ here as follows:\\
\tiny$1283473439629753704126934267490098493398393570168012367669647991537885205454692832877004410369529003451289174016000000\\00
\,{x_{{5}}}^{178}-\\
5530540078084901614541347163272585556710461227261360504342982180428179351592239804543551577526948963951376020275200000000\\
\,{x_{{5}}}^{177}+\\
1214368671181023056340825625079201542939242778107087349459445314331171252355905862118372161779163451531290451629834240000\\00
\,{x_{{5}}}^{176}-\\
1812431994310165286339190290550274741253258982737776277937193581998625964287755164528412074760604334773217549337021644800\\000
\,{x_{{5}}}^{175}+\\
2068707462078478021064883505615211507123991283159373507839725373497169580163620572530599699435828318654118957649735188480\\0000
\,{x_{{5}}}^{174}-\\
1925873300646058612284297034856109247401795114532998418465537834769953774453717368954658193957404620362604996521357381468\\16000
\,{x_{{5}}}^{173}+\\
1522757295376626038741802388644779881996787547117412902238876564126995898728424975593066546705697211558618302730037040565\\452800
\,{x_{{5}}}^{172}-\\
1051322757936520155495220079568528552005553685193943817063614397077959353964356464752768181592436787886207528323855554564\\8496640
\,{x_{{5}}}^{171}+\\
6466180208832624164623954167835744881120569147912311659285213246326350815257644212205523180135164250850094582806452137537\\9881984
\,{x_{{5}}}^{170}-\\
3596848876442193547475956779805286385701797301462769959601207412927544020389729830738737743522457707905475592562932467680\\57942016
\,{x_{{5}}}^{169}+\\
1830830541452091752073244673215535267393611413556147916846999545960035711373596330479080243786654533431614760172044776515\\267723264
\,{x_{{5}}}^{168}-\\
8607332481708493271697961883983485815650906558642787236387410082195867547956110322590237108086473316084344853203251259425\\888927744
\,{x_{{5}}}^{167}+\\
3765785531756162462427350264287410124363832376273812417161365892528408491688570163592369263093746241632193297632948022624\\0175538176
\,{x_{{5}}}^{166}-\\
1542758760588866064808912185086604663134560725127199843177604916390452553607595186339595386002418176472602234569399028917\\30183127040
\,{x_{{5}}}^{165}+\\
5948861302729476122393831944548721298294439852821189927023221924120621699253980927796068501984745210247055008344663587788\\42761396224
\,{x_{{5}}}^{164}-\\
2168433620503978001015246311858351279579697918033759070972940543387586412850181135391452008869571808403033665890101664503\\880721891328
\,{x_{{5}}}^{163}+\\
7499537721285873739505304287210342067791732541484579259340739857323203882008983520798041021251358693804347870758083031024\\468473413632
\,{x_{{5}}}^{162}-\\
2468704930469775775549100387314406027416273460312995966687429927495116482035439308730930902233085159700116864761193703030\\6596946706432
\,{x_{{5}}}^{161}+\\
7755911459172691169162387197427066684138277328892163727843824411761749593503084147541538475456398973104702372303567199338\\1088770654208
\,{x_{{5}}}^{160}-\\
2331058312235712666892225448509846596174222520233966816340610963617511249198797415785868029717774184443317302451929362040\\81047705485312
\,{x_{{5}}}^{159}+\\
6716277499879468660401153158232638278819814773807434406849956148142762041754503108242985523991280790795627581337968578165\\36286227333120
\,{x_{{5}}}^{158}-\\
1858444060643361072383491574826396089107694238070005703922032639993004377971311240260904533619650739293066599774730420765\\791351805575168
\,{x_{{5}}}^{157}+\\
4946684911247295558626514887387912259913504869666357827546724104367661923235136463821908209878377733679310763952445549592\\908904016248832
\,{x_{{5}}}^{156}-\\
1268362635471390146625495033502720783248432474716429078211589854255894741904562053987685615084639680167706889959649283875\\6448960051412992
\,{x_{{5}}}^{155}+\\
3136829044349927529309080246078610614695759437088711569871550101091262533047682613429105463982477859241444089535123758134\\7039301509414912
\,{x_{{5}}}^{154}-\\
7491223764029709356811637399377955598275358997168945003640574807572416180566257375258081450158359154489493889604479591633\\1762994599493632
\,{x_{{5}}}^{153}+\\
1729321274353975460068086313299385761058347135937046518707069672519313793171224006543407273167152230907451040387888013318\\93454464611385344
\,{x_{{5}}}^{152}-\\
3862448271322219100148003314179251789945179543745535319206591457036005905551041782644218932825546267455936900955711508691\\93583252475281408
\,{x_{{5}}}^{151}+\\
8353697560808298624818573838240776081557945392632408597890113984450485534542780037572958427247794539389093564782532916911\\25573957343313920
\,{x_{{5}}}^{150}-\\
1750875437025128396945875516893813752771485565823419734087812202563484252488690007146280268562438058613572858052007235138\\002290270874894336
\,{x_{{5}}}^{149}+\\
3558717741163152898610084645653818401412789520711877110687028727558579770637872151654439202485152044014730072561485319944\\228895779548413952
\,{x_{{5}}}^{148}-\\
7018883023774221747505869337866007795290076146413652181055494807400316300676735421670090898086016716547456109999916206449\\519670646700572672
\,{x_{{5}}}^{147}+\\
1344090326577454676269080536638959740286362021751705582475905543979195627608206166108067704687629566692088196824886282111\\2215871390889988096
\,{x_{{5}}}^{146}-\\
2500357340996334350493014184080272021854591086360947102851033455325624121591770297969978787420202499919740127471441716297\\0442944448491134976
\,{x_{{5}}}^{145}+\\
4520587476454335797927157969001199900667087694446069724497212679260433239135486965740591399930085092271431240280202006273\\9314054267520240640
\,{x_{{5}}}^{144}-\\
7946807071999575205546372233951826638836565949636313435627902629207612818360294911794695283214829026118001658227256208134\\5749785210716211200
\,{x_{{5}}}^{143}+\\
1358819274464904722904368996520264658037602452075211613568903004595164450690664725365245465720025382923226535487437168271\\01277137327336608768
\,{x_{{5}}}^{142}-\\
2260730501023715221402951820117237525988685309020907058903498223984445669060339191697014056761522313232661929997845934749\\92095051616500285440
\,{x_{{5}}}^{141}+\\
3660842969126465175883093795434890919005846002838643575918432039468292728514894869069401979867167460458228431903630436903\\41034489112715775488
\,{x_{{5}}}^{140}-\\
57711894360699221710021632077546747141049919132051451118883569811576518841725496626317030674140166222839953247106293261686\\6722439728606024704
\,{x_{{5}}}^{139}+\\
88590360337026478263637661218289704622147477120797995792259616642457384487164259362378208804951177351417626524836035815855\\3596241524859969856
\,{x_{{5}}}^{138}-\\
13243593542905229239104932446222722176189832466152708436166911969003720588150326322038614993840850184087510296807151247921\\96657926110337094016
\,{x_{{5}}}^{137}+\\
19282281473410535418050532777533532869464145616742040289190223888954237651154333070833464824361114582899354355893859852910\\04765281444161453376
\,{x_{{5}}}^{136}-\\
27343184778566301245697638082020449115287839566295218491265463524964789763744342140049519481489031047423991498340090089037\\38220390470222017280
\,{x_{{5}}}^{135}+\\
37761371663291914344043172602728576069421507588325510595315115650429218955393674783844925170536510050310104954088789332000\\27289534047414741248
\,{x_{{5}}}^{134}-\\
50778818993359011070181288002348728645172153207204768262401563804792898691324670301084026269897601577889867509279571581822\\80430915966281621888
\,{x_{{5}}}^{133}+\\
66470826431264506725268252441958349989067586344273044684172374564969673914111921389593200972749451069705616951995554397162\\21552032257127773000
\,{x_{{5}}}^{132}-\\
84665730446582041918171153590547170741297717467316880323800825659789885267675243367605866820091964253027116271823832882947\\75015222009735128560
\,{x_{{5}}}^{131}+\\
10486888119163061675447751156948857121859199963141219964176558780631056040684574668787571259362320276852048973761325818054\\937135826435624366351
\,{x_{{5}}}^{130}-\\
12620536013278324560228947084595780237143275014528607919005920036866493148030308250538675171805810291495028831899621383322\\298441867693259250546
\,{x_{{5}}}^{129}+\\
14739776737863001042567666839590226112058259549544710673907166304300126061157057052491061865523699228635651853581189588696\\247164325419697733428
\,{x_{{5}}}^{128}-\\
16679460369617362662719340877280409880674634391915309198415911964751127421921600749464236131604806858424593842748837891846\\739757489894904575926
\,{x_{{5}}}^{127}+\\
18246061855069356262774427930563565665891620910923173909675603823473951667167929641807805709338972897645339868438683701760\\461288425937234279024
\,{x_{{5}}}^{126}-\\
19233285570060442488386914937827877833117782052023146760898126187418807939640764053489420635252356121614705421200423646460\\481711509959509996810
\,{x_{{5}}}^{125}+\\
19443561542206869977818115693871312507881639500992954223365519335213019660010193595130336288020823745865621810816118340474\\023713736998330239034
\,{x_{{5}}}^{124}-\\
18713558278608281062081771967599945053247549774478586447692193518839999432427875633869392221142542916733641040206241694047\\577434653558675107746
\,{x_{{5}}}^{123}+\\
16940702713246375549201339934890455814313161656505614285438423919187724969115857333000689856362928884964018194309672467846\\101734015661861925761
\,{x_{{5}}}^{122}-\\
14106868018312175283411426758066900824971466460722743232145611371702530230608598928878075127752965426412640251894401792077\\955028894600122973020
\,{x_{{5}}}^{121}+\\
10295093031677265847549491836410135945032518966928003240117761989185601767384020728189302501462292684543984779278993192166\\480607950060017596026
\,{x_{{5}}}^{120}-\\
56955931757311713250542011136580134827210012000416028674940282054640428556099691079062154130308932174574948553626237631614\\79145129817542588416
\,{x_{{5}}}^{119}+\\
59845702255823773594572639594670346834536718872857896346986903258304795382857646805219789178415115069538483599936926543953\\5413981694461515732
\,{x_{{5}}}^{118}+\\
46278038460576254401890695841715450584245508027898552895811009196131523939137191042432623029599335802073326694035221787070\\30205932996838300968
\,{x_{{5}}}^{117}-\\
957051444237688111299226534170323575242825703345683085117756580400901030893488053019786327383881146182925280035823430021101\\0561199525416340128
\,{x_{{5}}}^{116}+\\
13816357541942362853812005959596217263811954198763888031918529922869020142707740777124766845337383030929312305375453019848\\859819280608263097912
\,{x_{{5}}}^{115}-\\
16998762591513613402205589295835895767820651652591273663955679899317436248352168540147170954921068343887990626294466265620\\529906129147460870106
\,{x_{{5}}}^{114}+\\
18842664725874904761582680706937296082479141662342395954790075813878725006427224089402377457362416718305196830885963529603\\974259123094715664788
\,{x_{{5}}}^{113}-\\
19200042801081826463945410967341636351725716070301634363644751842135677495525102964833404254403218149575076166154487886901\\083456510120496325024
\,{x_{{5}}}^{112}+\\
18070657052022404756902351480006894645422849368713506412749886434655518138174980842745337661934344467314412302415031328692\\627546584176909135628
\,{x_{{5}}}^{111}-\\
15604413386847734385815400120003927902497985226759843334259915008135757467031268664231870103696132891900831564223206756506\\497003192621371827344
\,{x_{{5}}}^{110}+\\
12084476395928257126009035173251416686493471710527219618463993257752279283341616182259877513798705304117531535646864445617\\965407465906487555092
\,{x_{{5}}}^{109}-\\
78931866904733395034289256132372966791572253304031661980346275203024602129850729610902464030334058964117115195632584864941\\27971601291451064660
\,{x_{{5}}}^{108}+\\
34654996331738116300513568927403911926897583255662792934426368710859590885641946318677096585786926931022969967024549321238\\38414077492176951172
\,{x_{{5}}}^{107}+\\
76342008137265855352457154173381913204276007231457903654058261392175970556076497411261325380320027940810270982640154787009\\6967821982063779234
\,{x_{{5}}}^{106}-\\
44089803863368391781962443736090686629848075360746732521233299213774457394769510576494086504194180112749229865300915479725\\31039930948862934080
\,{x_{{5}}}^{105}+\\
71792449556531971953932454872195603675644492773984566939341763623444370920314174139255900502559173080009628439424070427939\\05460470523437518140
\,{x_{{5}}}^{104}-\\
89027748353493143968044907141596602812663575433727226473159497890050299318071488925474014828449673921704912480683055317484\\90006229283334704360
\,{x_{{5}}}^{103}+\\
95390674243254028951416387101583884086507501671268109816504623055013937666607034381708557029350366091476713277207949663517\\75487499094709589528
\,{x_{{5}}}^{102}-\\
91712234227698594536947570100774813311059646124353327376104771944853102915172522736039016832906690161497750518546322055944\\02097572116070256360
\,{x_{{5}}}^{101}+\\
79831777547393387233398507707238880663607776647088597506503954257562846903175308829998405072345711476181455288410415947100\\68921223755775729952
\,{x_{{5}}}^{100}-\\
62259913093396359529440393894415639279058410216823142055147288941120570247242267720798257239989029000507597202679444889292\\84093955133745238552
\,{x_{{5}}}^{99}+\\
41789821240458939607128369026605760499680705283128064529550596100332375129726584250761953092423420196295474837565550618296\\69044944383370612043
\,{x_{{5}}}^{98}-\\
21117226671067749526925361477915031392700848141975664954737679906141142109136681558694635419241480249635576154592696844169\\41442053607155074002
\,{x_{{5}}}^{97}+\\
25218654402723218219412784265723850953865720122335202548657396853917596689033709014977411645525251066039904060353323050936\\5241977745737342108
\,{x_{{5}}}^{96}+\\
12351901037265407518282511061924740081620887845329673297102975249899282902269201828534732039482980880851177263059482460946\\46808832942615408698
\,{x_{{5}}}^{95}-\\
22597244646886550293078056370393329489404383276336677456171260270575647121965608337549367316252638863990999613113150114620\\72671019300635050048
\,{x_{{5}}}^{94}+\\
28031880378023975171918667387500994519280618075336086932158347006240582466329666264537234857543990217318762234697012190043\\18019354780247752102
\,{x_{{5}}}^{93}-\\
29089255504626968924361800462538757287633097787777324974718727706835499245661123896717969138624170884825368135175763189069\\73135074449468395158
\,{x_{{5}}}^{92}+\\
26645021033609399461262534085181854856939338699023228792812734128507042303568792307871574093207225493150390219840028345114\\74969500545023873550
\,{x_{{5}}}^{91}-\\
21814922959691964094476059682501210954703338243601457652314421767688853540495125406400582874348545913416657521694300262271\\13470519156640604723
\,{x_{{5}}}^{90}+\\
15758654837135889183460680619241121345718529494988694667242400885689975561363363561580029598485923625345885310278805771313\\29351010801960397708
\,{x_{{5}}}^{89}-\\
95172661954258507553271371937762198333761721598902982633603037568503484657512654656156066079406907401567336725746575017299\\4856172321973028678
\,{x_{{5}}}^{88}+\\
39014686092257133831443531466603091263676174943552517365091898167412001742049805163875032554485238175235423078617584040535\\8712766643471427288
\,{x_{{5}}}^{87}+\\
56306353613503845332057336721374540379515945535305732546721819714852460457836450478125330158592696380190021620866439445174\\028209651652318772
\,{x_{{5}}}^{86}-\\
36374308180934624338524000268106860653494568971058062405309950289805717416519666771402797225714893990126502337369595545627\\6055560680623137904
\,{x_{{5}}}^{85}+\\
53297956771021117315491231524854588607242419943130087864632930472382619648513793937279756303165103303724757897883570829598\\7475117960798034280
\,{x_{{5}}}^{84}-\\
58297741240687620422289275345171965040842707518430485059229706416906860879716902929719624361568531406401401132718540294171\\9154090756673793056
\,{x_{{5}}}^{83}+\\
54330903210510886919361286368047497716299260441536648053969514497720268765137053322555373482108793150393869774749051953152\\0530201751173553744
\,{x_{{5}}}^{82}-\\
44704275809797281898701588342830510390573895877345201155814289370013643210189774929323922082067974186887763600200081515869\\7080190365055345344
\,{x_{{5}}}^{81}+\\
32503608535394811085953898691740466307592868141835482024433247841500930421182775408446308972220230791884217504329273145047\\1599173728260916256
\,{x_{{5}}}^{80}-\\
20215044023687390802933425302157410839300360854477633793570033856904646215630026149855169020769412283681057493550633190564\\9518775406220896384
\,{x_{{5}}}^{79}+\\
95455511760176906919642371018800777844928981892707676741336945239143854492767770189799804927761612219179909567218538004185\\721626064469287232
\,{x_{{5}}}^{78}-\\
14147289720312394799626262849538252360861445526076119383146324714652244427564337843245262917773455576022560697589042884454\\464859890845895424
\,{x_{{5}}}^{77}-\\
39302651966794739241441801902560933280468734145339788320977576917795749884335851551263189459606607301337904666767757125978\\267514864559508352
\,{x_{{5}}}^{76}+\\
67316792605993791532762566613828344487040070952746968642533721000873868547897567812986029980401576153885902870070627992555\\092522387884253696
\,{x_{{5}}}^{75}-\\
75203875030209922332892102625703996246116196358548901487920954173893395949417607502630035981178016934328943997944060685041\\395830099475730688
\,{x_{{5}}}^{74}+\\
69356238253577045740751896975855014387640153539352455575996756589040915266505150451115111665471412061460145771548352734492\\425874846575889408
\,{x_{{5}}}^{73}-\\
55882145172451240580650159385164235785228893390588449769171108831427098245073648575921095037230474829566721300916894295493\\703778969485416960
\,{x_{{5}}}^{72}+\\
39755816674798794007370663733978784382203908845747672022768045381155025481228631577698942706829288156509263124975455285819\\579519137495484416
\,{x_{{5}}}^{71}-\\
24451718575218720970468823294677958604313573707162914153481709414216683555862120054548038297516111048610601728370244621651\\212620833740092416
\,{x_{{5}}}^{70}+\\
11956562829943089240709532476926166890966612723278822428118425819851692352915774525692323779136753502937127680835171950829\\831879231882358784
\,{x_{{5}}}^{69}-\\
30226179685352039524800895955345518361428984559404018686106510870471446895431897107265209451384703040089178242959815118647\\16436962424064000
\,{x_{{5}}}^{68}-\\
24690249453372989486548988650013722604899775366506767841752505768075950190862100587797874419081111566489917193304089340530\\75924353398038528
\,{x_{{5}}}^{67}+\\
51391162600474090628179820949437302702433716303933541947640669239446112757970577632494101706386877398251723711426473015455\\92070388948152320
\,{x_{{5}}}^{66}-\\
58016933737449570249418245581618155774558431581978900974847618045478308614747170956293182772236500164744849510928662342594\\25447096176164864
\,{x_{{5}}}^{65}+\\
52470207983394212985532467799261748523998335482054751876048963426595198925301259805545847617844690549121399379171943055954\\80992515734233088
\,{x_{{5}}}^{64}-\\
41181913824363202156024495448388860536445289957372511185158547964785391835612056853597056909058460857622368249998517670677\\79205471540314112
\,{x_{{5}}}^{63}+\\
28654171224447462196366498412975425948857786241381232214366058391961459443574050054046213103309661027283066591164906676834\\58750069620195328
\,{x_{{5}}}^{62}-\\
17539935888893859800208050537393306053478743006695073079714961776584629936856395525152189863849060518976797817425307957491\\90097221369921536
\,{x_{{5}}}^{61}+\\
90170636498893406824621487169529644598554287065722061778535708629457657462303066216592842316306489740001611819155809947780\\5445026163949568
\,{x_{{5}}}^{60}-\\
32603661531740924584844086706839266469149059970378843940642961411723253467203426415115021686700741917193295500308066940884\\1522536068415488
\,{x_{{5}}}^{59}-\\
11936558749239290836832309161032373802380504842588523102621146188184590128081771504527234555171543128611069846184140813371\\994814585241600
\,{x_{{5}}}^{58}+\\
17367788399603318397345516421677058663345724218952740860889840237862455416104920898634010668268825804035383463218018712005\\1297876981579776
\,{x_{{5}}}^{57}-\\
22116933081401017809909955289146481313870821367815606692489632240257072881381344071353955714357189678764343537608818241583\\9343764015939584
\,{x_{{5}}}^{56}+\\
20537730348353175922642973998378638386027232303528240034100887500777897088339400802178768382455131879787194540315140326183\\6715143544700928
\,{x_{{5}}}^{55}-\\
16242888037740361052528503359930824371322590988352274414172280309236828717087724837144652333925765454150079341617874649791\\8662720286097408
\,{x_{{5}}}^{54}+\\
114497696417758830197884111014698275137907535546875784137114831967225750146579289875639311614494765405835224440977096348370\\018726215417856
\,{x_{{5}}}^{53}-\\
729014545543073753790652421369886470888781883047576051376117884950795563340996027921182895994576279970678981168896388322476\\21086697488384
\,{x_{{5}}}^{52}+\\
416913257353352701963521372823394455738834133092343639285187122160809371135395447116412349334667379719063547121602154863733\\77831168114688
\,{x_{{5}}}^{51}-\\
207711649274708177123401975289823487557380477178611698044681330049965675246332266068387382905655874262667655727588838695814\\06138364067840
\,{x_{{5}}}^{50}+\\
816854024833697254047187243814337994162598294423541545890172062484427295343707105226053583362081314562481035222874332853256\\1279110348800
\,{x_{{5}}}^{49}-\\
144902380859816206608390794831745077273651088901696066845363525363734998515480962817310030216670490686027871363219912680451\\9523866116096
\,{x_{{5}}}^{48}-\\
155294518679120509418701272345361442131232284506148369824023354394210186222397898249937716080135372521447976527531641259161\\0529404944384
\,{x_{{5}}}^{47}+\\
246327392876394518418207281982112962012310972779666085586573466205858045853257705081642517716622737771701145414666936967327\\0877159948288
\,{x_{{5}}}^{46}-\\
235861637461994723114858865845752407871264144004202233092449567274236237832118393570438936089077517372625163144258722869296\\4070228230144
\,{x_{{5}}}^{45}+\\
187333592910790309558745043476603327571926520280422505218976817041003091498588933060037429602469331176392729457559868920025\\3003504812032
\,{x_{{5}}}^{44}-\\
133441330636232855439063727913384242514777704174868004324740215758834128192380876932023977918212310664367343831200293350654\\8313157009408
\,{x_{{5}}}^{43}+\\
879939050015245402885487709871665292173388006220640074169754233172423958808989610875007232500580654624855569083619929423061\\464777752576
\,{x_{{5}}}^{42}-\\
545871352085453629145805969945989221505151324342931115292760961248386185520016634954290794646935576612407195912199375452072\\008476524544
\,{x_{{5}}}^{41}+\\
321539361809710217991965283618576504821314970135012052850959973697973432905823889804387799747203933936974404799154501801235\\565214957568
\,{x_{{5}}}^{40}-\\
180886319908987016805945966138642725696390791099428116350016486118298447380423873998403451084902107355251191742163898327194\\352031367168
\,{x_{{5}}}^{39}+\\
975591535677821713968581599540664091395578426591479240443201789011632520044638702182797490480132976057131516041547334603979\\19085527040
\,{x_{{5}}}^{38}-\\
505771062475735734546421248450914914769317392653165401507004897347248621285361619386861242551398569264824052603616415068700\\15194824704
\,{x_{{5}}}^{37}+\\
252491841888224928759870248139713509284565485598208706035503639179459074390425142437761999325069058755914017519845822237389\\30798264320
\,{x_{{5}}}^{36}-\\
121531555111204348187289925549538244593267043921539071732513184654057994565662200881721531737013789979276751326227796956990\\78523387904
\,{x_{{5}}}^{35}+\\
564472593274100680600148912652239028720020979640638052759402890006138135299939335815204919186461581503379024157095331210078\\8278919168
\,{x_{{5}}}^{34}-\\
253126487273471433364948693115559920574526815925215831578482781218900380864004328804719564096780464499776632622378966261176\\4875886592
\,{x_{{5}}}^{33}+\\
109621314158756279378197806414111283700938814106296707538719647548646472860482993785928951975224278513883142785815248134972\\9463042048
\,{x_{{5}}}^{32}-\\
4585096771161110970920487751193294646122645548961171185426818258611867615855062369619815031522862463686011697365569566376356\\85646336
\,{x_{{5}}}^{31}+\\
185206322387095209162134553638724181255063099693953347166513428516243205486880448492237894644849278724477797771619439736452\\403953664
\,{x_{{5}}}^{30}-\\
722282917313616580717016568145931274352825287602271305037529260205924907179318739751751977755696269612161967084692046690788\\82484224
\,{x_{{5}}}^{29}+\\
271851440255818060509335490529765189991654424752954306581208549856161217808095244129356303624892099807183160403117357912246\\90311168
\,{x_{{5}}}^{28}-\\
986957294587225067939468327940238019061084871030747194704248509002668589811357593311246913911366435897988275414621217940493\\3562368
\,{x_{{5}}}^{27}+\\
345397762963202387374064203242664310678712047222430316372625747429612451883616916424192412939624916317705178923902677734880\\4444160
\,{x_{{5}}}^{26}-\\
116425760423306465214911023686588832585931810791078378406770158253540037018508994444858400360221965940799287682361320095740\\4348416
\,{x_{{5}}}^{25}+\\
377642767236105053773739732119515032245566246383740440081945941862204403899387217417962246137784628667728143242559640823257\\890816
\,{x_{{5}}}^{24}-\\
117746194354247061813124832871526951862062016052302572623527888730558772370864326119937274147018027728377536767742531764934\\934528
\,{x_{{5}}}^{23}+\\
352459870759017596756740697434679649855872761844905021374167118499217354362152667054855737000124806799768236468068217344218\\89024
\,{x_{{5}}}^{22}-\\
101148462211640964398258040702198264139855120333737285521450150284231522877704654697828850473172705829129258172716088307335\\82336
\,{x_{{5}}}^{21}+\\
277845776669249670026113592719534200179051070301050586342016529351199814983917459135671441046941536923534166276679528022540\\2880
\,{x_{{5}}}^{20}-\\
729222059942677239194861472602631970992826200048350708453908761070046160711682680923725375727555598800883196185895432265662\\464
\,{x_{{5}}}^{19}+\\
182490369232807069021403284819016676754281859773867649736818460685067399592662641294817600630086707838402766296441916835758\\080
\,{x_{{5}}}^{18}-\\
4344477744175562527135371694495782756156585591710108753504753574327396540926952554076344700938452533754073417062071003198259\\2
\,{x_{{5}}}^{17}+\\
9813124877113373778497345205458210343959807987865576555746326509951297134872890751384435472868596239054924919397657763381248\\
\,{x_{{5}}}^{16}-\\
2096721881206838040061523447640470792672023740117968464808253995606992645517606929528608398302703928594606307022193706401792\\
\,{x_{{5}}}^{15}+\\
422314275671514886321123779568396098193816231242663172075394857732652916567647564791990662433359094935353521403228822962176\\
\,{x_{{5}}}^{14}-\\
79864922656123001672373003402242843914722460212594718228143218584471442458879920199517152848106944554516039278408268513280\\
\,{x_{{5}}}^{13}+\\
14114974194660343006125317651559136005123731524715696467962306387120914690527501028427382041548468495779517799388749496320\\
\,{x_{{5}}}^{12}-\\
2318631152616942863910854119763409708464510724161955816200337405320112889645914590041456718031593814708975738270882201600\\
\,{x_{{5}}}^{11}+\\
351711906340592001610413792726774343523694998021397689259098322027975998519200903802531516313342353703169108580342169600\\
\,{x_{{5}}}^{10}-\\
48881654482529098774644700465700353976861287780733065236104378937903373457008646072769666061615264294470604491325440000\\
\,{x_{{5}}}^{9}+\\
6165199924406916311726448672009618574787780580033514548805089543085277996422090595590674195421977333160669442211840000\\
\,{x_{{5}}}^{8}-\\
697265508611783523063822245221103038364102732880164181773707035276354526553076359731686155685276694260890153779200000\\
\,{x_{{5}}}^{7}+\\
69639850156490923537983768960854449000882112021803741111317445152925716960736160612227466714320924661507948544000000\\
\,{x_{{5}}}^{6}-\\
6019516033064772902099011570341461065066869308802412228392754929923286004456239967801891827762038101078179840000000\\
\,{x_{{5}}}^{5}+\\
437977099237740958620049330989996973827136218815395628707280026388262269517157957648785965624334795512217600000000\\
\,{x_{{5}}}^{4}-\\
25760391661695283164506462715706115713693855148545427268637078512865404994468648889469516554411358289920000000000\\
\,{x_{{5}}}^{3}+\\
1148563712556502089867697244210114379584113909091646727528636419580863895074813100054898930060623872000000000000\\
\,{x_{{5}}}^{2}-\\
34512293527648499324118594567227987096710626145681634865537971099433193107503302883739921127833600000000000000\\
\,x_{{5}}+\\
524157766562226639572627086349581115653126065201703317806174894734427971871427112480895139840000000000000000$

\normalsize

\end{document}